%% file: arxiv.tex
\documentclass[onefignum,onetabnum]{siamart171218}

\input{ex_shared}

\ifpdf
\hypersetup{
  pdftitle={Solving a sequence of eigenvalue problems using snapshot and row subsampling},
  pdfauthor={F. Hrobat, Y. Nakatsukasa}
}
\fi

\usepackage{amsmath}
\usepackage{amsfonts}
\usepackage{amssymb}
\usepackage{comment}
\usepackage{framed}
\usepackage{mathrsfs}
\usepackage{csquotes}
\usepackage{booktabs}
\usepackage{pgfplots}
\usepgfplotslibrary{groupplots}
\pgfplotsset{compat=1.18}
\usepackage{pgfplotstable}
\usepackage{float}
\usepackage{color}
\usepackage{tikz}
\usepackage{subcaption}
\tikzset{
    mark size=1pt
}
\usetikzlibrary{decorations.pathreplacing,calc}

\usepackage[normalem]{ulem}

\newcommand{\ignore}[1]{}
\newcommand{\C}{\mathbb{C}}
\newcommand{\R}{\mathbb{R}}

\renewcommand{\vec}[1]{\boldsymbol{#1}}

\makeatletter
\newcommand*\rel@kern[1]{\kern#1\dimexpr\macc@kerna}
\newcommand*\widebar[1]{%
  \begingroup
  \def\mathaccent##1##2{%
    \rel@kern{0.8}%
    \overline{\rel@kern{-0.8}\macc@nucleus\rel@kern{0.2}}%
    \rel@kern{-0.2}%
  }%
  \macc@depth\@ne
  \let\math@bgroup\@empty \let\math@egroup\macc@set@skewchar
  \mathsurround\z@ \frozen@everymath{\mathgroup\macc@group\relax}%
  \macc@set@skewchar\relax
  \let\mathaccentV\macc@nested@a
  \macc@nested@a\relax111{#1}%
  \endgroup
}
\makeatother

\definecolor{cbBlue}{HTML}{0072B2}
\definecolor{cbOrange}{HTML}{D55E00} 
\definecolor{cbGreen}{HTML}{009E73}
\definecolor{cbSkyBlue}{HTML}{56B4E9}
\definecolor{cbYellow}{HTML}{F0E442}
\definecolor{cbPurple}{HTML}{CC79A7}

\pgfplotsset{
    /pgfplots/cycle list={%
        {cbBlue, solid, thick},       
        {cbOrange, dashed, thick}, 
        {cbGreen, dotted, very thick},
        {black, dashdotted, thick},   
        {cbPurple, loosely dashed, thick},
        {cbSkyBlue, solid, thick}%
    },
    every axis legend/.append style={
        cells={anchor=west},
        draw=none,
    }
}

\begin{document}

\algnewcommand{\algorithmicgoto}{\textbf{go to}}%
\algnewcommand{\Goto}[1]{\algorithmicgoto~\ref{#1}}%

\maketitle

% REQUIRED
\begin{abstract}
 A sequence of eigenvalue problems requires solving $A_i\vec{x}_i= \lambda_i\vec{x}_i$ where $i \in \{1, \dots, N\}$ $N \gg 1$ and (usually) $\lambda_i$ is the largest or smallest eigenvalue of $A_i$. Often, the matrices $A_i$ come from a discretization of a smooth map 
 depending on a parameter $p \rightarrow A(p)$, that is $A_i = A(p_i)$, so the matrices $A_i$ change gradually along the sequence. Under this latter assumption, we introduce an efficient algorithm for solving sequences of eigenvalue problems. Our algorithm combines the ideas of reduced order model and row subsampling, as done in [E. Jones and Y. Nakatsukasa, arXiv, 2025] for linear systems. In addition, in our approach, the same set of rows is guaranteed to work well independently of the value of the parameter  $p$ or the matrix $A(p)$ as long as the snapshot matrix is unchanged and given that the row subset was chosen well at the beginning. Furthermore, we implement an update strategy of the snapshot matrix. Along these results, we present theoretical analysis of the snapshot matrix for eigenvalue problems, showing an exponential decay of the singular values, and we give  bounds on the error on eigenvectors for unseen value of the parameter. This analysis generalizes that in [M. Guido, D. Kressner, and P. Ricci, J. Sci. Comput., 2024] and shows that, regardless of the distribution of the snapshot points, the convergence is always (asymptotically) exponential in the number of snapshot points if the function $A(p)$ admits an analytical extension. We further generalize our algorithm for computing largest or smallest singular values and relative singular vectors and to generalized eigenvalue problems. We conclude with numerical results showing the efficiency of our approach.
\end{abstract}

% REQUIRED
\begin{keywords}
  eigenvalues, singular values, snapshots, row subsampling, reduced order model.
\end{keywords}

% REQUIRED
\begin{AMS}
  15A18, 65F15
\end{AMS}

\section{Introduction}

Given a matrix $A \in \C^{n \times n}$, finding the largest or smallest set of eigenvalues is a crucial problem in numerous applications, including engineering \cite[Chp. 10]{Saa2011}, PDEs \cite[\S 3]{Sch2001}, computational chemistry \cite{LeB2005}, network science \cite{Lan2012}, data analysis \cite{Has2009}. In particular, it is often required to solve a \textit{sequence of eigenvalue problems}.
These problems arise for example in kernel methods \cite[\S 3]{Sch2002}, spectral graph theory \cite{Chu1997}, manifold learning \cite{Bel2001}, deep learning \cite{Sag2016}, online learning \cite{Mit2013} and kernel methods for PDEs \cite{Ana2019}.
One of the most effective principles employed to tackle sequences of eigenvalue problems are model order reduction \cite{Mar2006, patera2007reduced, maday2002priori, hesthaven2015parametrized} as well as subspace recycling \cite{Soo2020,  Sir2016}, where the solution of the problem is sought in a subspace of small dimension. To limit the size of this subspace, Proper Orthogonal Decomposition (POD) may be employed \cite{Kun1999, Buc2013, Ber2023}. This idea has been already applied to parametric PDE eigenvalue problems \cite{Alg2025}, parametrized elliptic eigenvalue problems \cite{Fum2016, Hor2017} and data-driven reduced order modelling \cite{Alg2024, Ber2023/2}. More specifically, suppose we need to solve a sequence of eigenvalue problems of the form
\begin{equation}\label{eq:eig}
 A(p)\vec{x}(p) = \lambda(p)\vec{x}(p), 
\end{equation}
where $A(p) \in \C^{n\times n}$ for all values of $p$ in a compact domain, while $\lambda(p)$ is the largest/smallest eigenvalue. First, we can solve this problem for a selection of parameter values (the \textit{snapshot points)}
\begin{align*} A(p_0)\vec{x}(p_0) &= \lambda(p_0)\vec{x}(p_0), \\ A(p_1)\vec{x}(p_1) &= \lambda(p_1)\vec{x}(p_1), \\ &\vdots \\ A(p_{r-1})\vec{x}(p_{r-1}) &= \lambda(p_{r-1})\vec{x}(p_{r-1})
\end{align*}
where $r \ll n,N$.
Second, for all other values of $p$, we look for an eigenvector $\vec{x}(p)$ in the subspace
\[ \text{span}(\vec{x}(p_0), \vec{x}(p_1), \dots,\vec{x}(p_{r-1})).\]
Concretely, we have reduced the dimension of the space where $\vec{x}(p)$ might live from $n$ to $r$.

In order for this method to work, we theoretically need guarantees on how $A(p)$ changes (smoothly, analytically, continuously, \dots) and how the snapshots points are chosen. This problem has been addressed in various works regarding the Kolmogorov $n$-width \cite{Pin1985}. Under reasonable assumptions on analyticity of $A(p)$, Guido, Kressner and Ricci \cite{Gui2024} proved that this approach is effective in predicting future solution (i.e. for parameter value $p$ greater than all snapshot points) in the setting of linear systems. This analysis is particularly useful for time-dependent eigenvalue problems, where the parameter $p$ has a natural ordering, as in the case of time.

Define the \textit{snapshot matrix} as
\[ X = [\vec{x}(p_0), \dots , \vec{x}(p_{r-1})] \in\mathbb{C}^{n\times r}.\]
In the setting of eigenvalue problem, to compute the eigenpair $(\lambda(p), \vec{x}(p))$ we 
consider the variational formulation~\cite[Thm. 11.4.2]{Par1998}\cite[Eq. 1.10]{Nak2024} of the classical Rayleigh-Ritz process:
first solve the least-squares problem
\[ M(p) = \operatorname{argmin}_{M \in \C^{r \times r}} \| A(p)X-XM\|_F \]
and return $(\Tilde{\lambda}(p),\Tilde{\vec{x}}(p))$ where $\Tilde{\vec{x}}(p) = X\vec{y}(p)$ and $(\Tilde{\lambda}(p), \vec{y}(p))$ is an eigenpair of $M(p)$, i.e.
\[ M(p)\vec{y}(p) = \Tilde{\lambda}(p)\vec{y}(p)\]
and $\Tilde{\lambda}(p)$ is the largest/smallest eigenvalue of $M(p)$. A popular strategy when the number of snapshot points is large is Proper Orthogonal Decomposition (POD): first we select the first left singular vectors $Q$ of $X$ and we use $Q$ instead of $X$ to project $A(p)$ \footnote{Note that the solution $M(p)$ for the least-squares problem $M(p) = \operatorname{argmin}_{M \in \C^{r \times r}} \| A(p)X-XM\|_F $  depends on $X$, but the returned eigenpairs only depend on $\operatorname{span}(X)$. In fact, if $\overline{X}$ is such that $\overline{X}=XR$ for some invertible $R \in \C^{r \times r}$, then the solution of  $\operatorname{argmin}_{M \in \C^{r \times r}.} \| A(p)\overline{X}-\overline{X}M\|_F \|$ is equal to $R^{-1}M(p)R$, which has the same eigenvalues of $M(p)$ and the corresponding eigenvector $\overline{\vec{y}}(p)$ is equal to $R^{-1}\vec{y}(p)$, which implies $X\vec{y}(p) = \overline{X}\overline{\vec{y}}(p)$. In particular, we can first orthogonalize $X$ and then solve the least-squares problem without changing the computed eigenpairs.}, possibly truncating $Q$ depending on the value of the singular values.
This is closely related to the classic Rayleigh-Ritz method for eigenvalue problem \cite[\S 4.3]{Saa2011}.

In this work we combine the snapshot
method and the method of sketched Rayleigh-Ritz \cite[\S 1.3.1]{Nak2024}.
In particular, we will consider a subsampling matrix $S \in \R^{s \times n}$ and replace $M(p)$ with
\[ \widehat{M}(p) = \operatorname{argmin}_{M \in \C^{r \times r}} \|S(A(p)X-XM)\|_F.\]
This method has the appealing advantage that, once we have computed $S$ (i.e we have chosen the row indices), we can compute $SA(p)$ without looking  at the entire matrix $A(p)$, besides the fact that the least-square problem has now dimension $s \times r$ rather than $n \times r$.
The remainder of this paper is organized as follows: in \Cref{sec: snapmatrix}, we provide theoretical results on the snapshot matrix and we prove that the singular values decay exponentially if $A(p)$ is analytic, and the rate of decay depends on how large the domain of analyticity of $A(p)$ is. In \Cref{sec: theory} we provide an extensive theoretical analysis of the subspace generated by the snapshot points. In particular, we show how the distance between an eigenvector associated with a new parameter value to this subspace decays when the snapshot points are the equispaced points or Chebyshev points in $[-1,1]$. Furthermore, we show that this rate is asymptotically exponential for any distribution of (distinct) snapshot points. \Cref{sec: algo} describes our algorithm  in detail, and in \Cref{sec: imple} we provide some comments on the implementation. In \Cref{sec: related} we list some related algorithms in the literature. In \Cref{sec: svd} we generalize our algorithm to the SVD and generalized eigenvalue problems. To conclude, in \Cref{sec: experiments}, we show numerical experiments.

\subsubsection*{Terminology}
In the following, we will use
\begin{itemize}
    \item \textit{time-dependent eigenvalue problems} to indicate that the matrices $A(p)$ can only be accessed sequentially, for example if $p$ represents time;
    \item \textit{parameter-dependent eigenvalue problems} to indicate that the matrices $A(p)$ can be accessed independently of the value of $p$, without restrictions;
    \item \textit{sequence of eigenvalue problems} to indicate both time-dependent and \\ parameter-dependent eigenvalue problems.
\end{itemize}

\section{Snapshot matrix for eigenvalue problems}\label{sec: snapmatrix}

In this section we outline the assumption we make to conduct the theoretical analysis of our algorithm and we show the resulting theoretical properties of the snapshot matrix $X$. We work under similar assumptions as in \cite{Gui2024}.
We assume that the parameter domain is $[-1,1]$ for simplicity.

\begin{assumption}[{from \cite[Assumption $1$]{Gui2024}}]\label{assumption}
    Consider the open Bernstein ellipse $E_{\rho} \subset \C$ for $\rho >1$, that is, the open ellipse with foci $\pm 1$ and semi-minor/-major axes summing up to $\rho$. We assume that $A: [-1,1] \rightarrow \C^{n \times n}$ admits an analytic extension on $E_{\rho}$ which is also continuous on the closure of $E_{\rho}$, denoted by $\widebar{E}_{\rho}$. 
    Furthermore, we assume that the eigenvalue $\lambda(p)$ we are considering is simple and that 
    \[ \text{gap} := \inf_{p \in E_{\rho}} \min_{\lambda \in \Lambda(A(p))\text{\textbackslash}{\lambda(p)}} |\lambda(p) - \lambda| > 0.\]
\end{assumption}

With a slight abuse of notation, we will indicate by $A$ the analytic extension of $A$ defined on $E_{\rho}$. Under this assumption we have the following result.

\begin{lemma}
    Under \Cref{assumption}, there exists $\Delta$ such that
    \[ \|A(p)\|_2 \leq \Delta \]
    for all $p \in \widebar{E}_{\rho}$.
    Furthermore, there exists a continuous map $\vec{x}: \widebar{E}_{\rho} \rightarrow \C^n$  such that $\vec{x}|_{E_{\rho}}$ is analytic for all $p \in \widebar{E}_{\rho}$, and 
    \[ A(p)\vec{x}(p) = \lambda(p)\vec{x}(p).\]
\end{lemma}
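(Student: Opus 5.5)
The first claim is a compactness argument. By \Cref{assumption}, $A$ is continuous on the compact set $\widebar{E}_{\rho}$. Since $M\mapsto\|M\|_2$ is continuous, the map $p\mapsto\|A(p)\|_2$ is continuous on $\widebar{E}_{\rho}$ and attains a finite maximum there. We take $\Delta$ to be this maximum.

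For the eigenvector we use Riesz spectral projectors. Fix $p_0\in\widebar{E}_{\rho}$ and let $\Gamma$ be a small circle around $\lambda(p_0)$ of radius smaller than $\text{gap}/2$. By continuity of $A$ and of the eigenvalues, for $p$ in a neighbourhood $U$ of $p_0$ (relative to $\widebar{E}_{\rho}$) exactly one eigenvalue of $A(p)$, namely $\lambda(p)$, lies inside $\Gamma$, and none lies on $\Gamma$. The gap condition keeps the remaining eigenvalues at distance $\geq\text{gap}$; on the boundary we extend this by continuity of the spectrum. On $U$ we then define
\[ P(p) = \frac{1}{2\pi i}\oint_{\Gamma} (zI - A(p))^{-1}\,dz. \]
The resolvent is jointly continuous in $(z,p)$, and it is analytic in $p$ wherever $A$ is. Hence $P$ is continuous on $U$ and analytic on $U\cap E_{\rho}$. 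Because $\lambda(p)$ is simple, $P(p)$ is a rank-one projector onto $\ker(A(p)-\lambda(p)I)$. By the same argument $\lambda(p)=\operatorname{tr}(A(p)P(p))$ is analytic.

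To turn local projectors into a single global eigenvector we use that $E_{\rho}$ is simply connected (it is convex) and one-dimensional in the complex sense. We would try the following construction first. Fix $p^\ast=0$ and a vector $\vec{v}$ with $P(0)\vec{v}\neq 0$, and set $\vec{x}(p)=P(p)\vec{v}$. This is automatically analytic and continuous, since $P$ is globally well defined: the local definitions agree on overlaps because the projector is intrinsic. The difficulty is that $\vec{x}(p)$ may vanish at isolated points, wherever $\vec{v}$ is annihilated by $P(p)$.

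The robust fix is Kato's transport equation $\vec{x}'(p)=P'(p)\vec{x}(p)$, with $\vec{x}(0)=P(0)\vec{v}$. This is a linear ODE with analytic coefficients on the simply connected domain $E_{\rho}$, so it has a unique global analytic solution. That solution stays in $\operatorname{range}P(p)$, because $\frac{d}{dp}\bigl((I-P)\vec{x}\bigr)=-P'\vec{x}+(I-P)P'\vec{x}=-PP'\vec{x}=-PP'P\vec{x}=0$, using $PP'P=0$. It also never vanishes, by uniqueness for linear ODEs. Thus $A(p)\vec{x}(p)=\lambda(p)\vec{x}(p)$ with $\vec{x}(p)\neq 0$.

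The main obstacle is continuity up to $\partial E_{\rho}$. The ODE solution is defined on the open set, and a bounded continuous extension requires $P'$ to be integrable along paths to the boundary. Rather than bounding $P'$ near $\partial E_{\rho}$, we would first solve on a slightly smaller ellipse $E_{\rho'}$ with $\rho'<\rho$, where everything is continuous on the closure, and note that this suffices for the later approximation results. Alternatively, if the paper only needs a nonzero eigenvector, we would accept the simpler $\vec{x}(p)=P(p)\vec{v}$. That choice is continuous on $\widebar{E}_{\rho}$ and analytic inside, and it is the statement as literally written, provided one is willing to allow isolated zeros of $\vec{x}$.
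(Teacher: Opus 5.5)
Your bound on $\|A(p)\|_2$ is the paper's compactness argument. For the eigenvector, the paper only cites Kato's analytic perturbation theory, i.e.\ the holomorphic Riesz projector and the transformation function on a simply connected domain, which is the machinery you rebuild. The gap is that neither of your two endings gives what the lemma is used for: a \emph{nonvanishing} eigenvector field that is continuous on $\widebar{E}_{\rho}$ (needed for $\kappa_{\rho}=\max_{\partial E_{\rho}}\|\vec{x}\|$) and analytic inside. Shrinking to $E_{\rho'}$ changes the statement and every downstream constant. And $\vec{x}(p)=P(p)\vec{v}$ is not ``the statement as literally written'' in any useful sense. If zeros were allowed, $\vec{x}\equiv 0$ would already qualify, and a zero at a snapshot point or at $p_*$ would make $X$ degenerate or the bound on $\|(I-XX^{\dagger})\vec{x}(p_*)\|_2$ empty. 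Choosing $\vec{v}$ well does not avoid the zeros either. Take $A(p)=\left[\begin{smallmatrix}1-p^2 & p\\ p-p^3 & p^2\end{smallmatrix}\right]$, a rank-one projector for every $p$, so $P(p)=A(p)$, $\lambda(p)=1$ and the gap is $1$. Then $P(p)\vec{v}=\bigl((1-p^2)v_1+pv_2\bigr)(1,p)^T$. For every $\vec{v}\neq 0$ this quadratic has a root in $|p|\le 1$ (the product of its roots is $-1$ if $v_1\neq 0$, and $p=0$ is a root if $v_1=0$), and that root lies in $\widebar{E}_{\rho}$ once $\rho\ge 1+\sqrt{2}$.

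The missing idea is to combine your two constructions.
\begin{itemize}
\item Since $E_{\rho}$ is convex and contains $0$, $t\widebar{E}_{\rho}\subset E_{\rho}$ for $0<t<1$. So if $\vec{y}$ is your nonvanishing transported eigenvector on $E_{\rho}$, then $p\mapsto\vec{y}(tp)$ is analytic on a neighbourhood of $\widebar{E}_{\rho}$, nonvanishing, and lies in $\operatorname{range}P(tp)$.
\item By uniform continuity of $P$ on the compact set $\widebar{E}_{\rho}$, fix $t$ with $\sup_{p\in\widebar{E}_{\rho}}\|P(p)-P(tp)\|_2<1$, and set $\vec{x}(p)=P(p)\vec{y}(tp)$.
\item This $\vec{x}$ is continuous on $\widebar{E}_{\rho}$, analytic on $E_{\rho}$, and lies in $\ker(A(p)-\lambda(p)I)$.
\item Moreover $\|\vec{x}(p)-\vec{y}(tp)\|_2=\|(P(p)-P(tp))\vec{y}(tp)\|_2<\|\vec{y}(tp)\|_2$, so $\vec{x}(p)\neq 0$ on all of $\widebar{E}_{\rho}$.
\end{itemize}
Your concern about the boundary was legitimate. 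The paper's one-line appeal to Kato does not address it either, since the transformation function there is built on an open domain.

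One smaller repair: your check that the ODE solution stays in $\operatorname{range}P$ rewrites $PP'\vec{x}$ as $PP'P\vec{x}$, which assumes $\vec{x}=P\vec{x}$. Instead, note that $\vec{w}=(I-P)\vec{x}$ satisfies $\vec{w}'=-PP'\vec{x}=-PP'\vec{w}$ (because $PP'P=0$) with $\vec{w}(0)=0$, so $\vec{w}\equiv 0$ by uniqueness.
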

\begin{proof}
    The first claim follows as $A$ is continuous and $\widebar{E}_{\rho}$ is compact. The second claim follows from \cite[\S 2]{Kat1976}.
\end{proof}
Define 
\begin{equation}\label{eq:kappa}
\kappa_{\rho} = \max_{t \in \partial E_{\rho}}\|\vec{x}(t)\|.    
\end{equation}
Thanks to the previous lemma, we can choose the snapshot matrix

\[ X = [\vec{x}(p_0), \dots , \vec{x}(p_{r-1})] \in \C^{n \times r}\]
where $\vec{x}(p)$ depends analytically on $p$.
As a consequence, we can prove that the singular values of $X$ decay exponentially.

\begin{theorem}\label{thm:singdecay}
    Assume \Cref{assumption}. Let $X \in \C^{n \times r}$ be the snapshot matrix given by
    \[ X = [\vec{x}(p_0), \vec{x}(p_1), \dots , \vec{x}(p_{r-1})]\]
    and the map $\vec{x}$ is analytic. Then the $k$-th singular value  $\sigma_k(X)$ of $X$ satisfies

    \[ \sigma_k(X) \leq \kappa_{\rho}\frac{2\rho\sqrt{r}}{1 - \rho^{-1}} \rho^{-k}\]
    where $\kappa_\rho$ is as in~\cref{eq:kappa}.
    
\end{theorem}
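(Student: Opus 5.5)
We bound $\sigma_k(X)$ by exhibiting a rank-$(k-1)$ approximation of $X$ through polynomial interpolation of each coordinate of $\vec{x}$ in Chebyshev form. Since $\vec{x}$ is analytic on $E_{\rho}$ and continuous on $\widebar{E}_{\rho}$, expand it in a Chebyshev series
\[ \vec{x}(p) = \sum_{j=0}^{\infty} \vec{c}_j T_j(p), \qquad p \in [-1,1], \]
with vector coefficients $\vec{c}_j \in \C^n$. The standard Bernstein-ellipse estimate (obtained via the substitution $p = (z+z^{-1})/2$ and Cauchy's integral on $|z|=\rho$, applied componentwise but with the contour integral taken of the vector-valued function) gives $\|\vec{c}_j\| \le 2\kappa_{\rho}\rho^{-j}$ for $j \ge 1$, with $\kappa_\rho$ as in~\cref{eq:kappa}. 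To handle continuity only on the closure, we apply the estimate on $\partial E_{\rho'}$ for $\rho'<\rho$ and let $\rho'\to\rho$, using continuity of $\vec{x}$ on $\widebar{E}_{\rho}$ (so $\max_{\partial E_{\rho'}}\|\vec{x}\|\to\kappa_\rho$). This vector-valued coefficient bound, with the norm taken outside the integral, is the step to state carefully; everything else is routine.

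Next, truncate: set $X_{k-1} = [\vec{s}(p_0),\dots,\vec{s}(p_{r-1})]$ where $\vec{s}(p)=\sum_{j=0}^{k-2}\vec{c}_jT_j(p)$. Every column of $X_{k-1}$ lies in $\operatorname{span}(\vec{c}_0,\dots,\vec{c}_{k-2})$, so $\operatorname{rank}X_{k-1}\le k-1$. By Eckart--Young (or Weyl's inequality), $\sigma_k(X)\le\|X-X_{k-1}\|_2\le\|X-X_{k-1}\|_F$.

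Finally, bound each column. Since $|T_j(p_i)|\le1$ for $p_i\in[-1,1]$,
\[ \|\vec{x}(p_i)-\vec{s}(p_i)\| \le \sum_{j\ge k-1} 2\kappa_\rho\rho^{-j} = \frac{2\kappa_\rho\rho^{-(k-1)}}{1-\rho^{-1}} = \kappa_\rho\frac{2\rho}{1-\rho^{-1}}\rho^{-k}. \]
The Frobenius norm over the $r$ columns is at most $\sqrt{r}$ times this, which yields exactly the claimed bound $\sigma_k(X)\le \kappa_\rho\frac{2\rho\sqrt r}{1-\rho^{-1}}\rho^{-k}$. If $k-1=0$ we take the empty truncation, and the same estimate follows from $\|\vec{x}(p_i)\|\le \|\vec c_0\|+\sum_{j\ge1}\|\vec c_j\|$, using $\|\vec c_0\|\le\kappa_\rho$. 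The bound is trivially informative only for $k\le r$.
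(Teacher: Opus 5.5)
Your proof is correct and follows essentially the same route as the paper. The paper's proof is a one-line citation of Theorem~2.4 in \cite{Kre2011}, and your argument reproduces that result with exactly the stated constant: truncate the Chebyshev expansion of $\vec{x}$ at degree $k-2$ to get a rank-$(k-1)$ approximant, bound each column's tail by $2\kappa_\rho\rho^{-(k-1)}/(1-\rho^{-1})$ using the vector-valued coefficient estimate (norm taken outside the contour integral), and pass to the Frobenius norm to pick up the factor $\sqrt{r}$.
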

\begin{proof}
    This follows directly from Theorem 2.4 in \cite{Kre2011} applied to $X$ (there named $B$).
\end{proof}

% =========================================================
% PLOT 3
% =========================================================
\begin{figure}[ht]
    \centering
    \begin{tikzpicture}
        \begin{semilogyaxis}[
            width=0.6\textwidth,
            title={Decay of the smallest singular value of the snapshot matrix},
            xlabel={Number of snapshot ($r$)},
            ylabel={$\sigma_{\min}$},
            grid=major,
            legend pos=south west,
            legend style={font=\footnotesize, cells={anchor=west}}
        ]
        
        \addplot[color=blue, mark=square, thick, only marks] 
            table[x expr=\coordindex+4, y=sing_cheb] {theory/1000_3.1_0.8765.tex};
        \addlegendentry{$\sigma_{\min}\ X_{\text{cheb}}$}
        
        \addplot[color=red, mark=*, thick] 
            table[x expr=\coordindex+4, y=sing_equi] {theory/1000_3.1_0.8765.tex};
        \addlegendentry{$\sigma_{\min}\ X_{\text{equi}}$}
        
        \addplot[color=black, dashed, thick, mark=none] 
            table[x expr=\coordindex+4, y=rate_sing_cheb] {theory/1000_3.1_0.8765.tex};
        \addlegendentry{estimated rate}
        
        \end{semilogyaxis}
    \end{tikzpicture}
    \caption{Plot of smallest non-zero singular value of the snapshot matrix as a function of the number of snapshot points. The targeted eigenvalue is the rightmost one. The matrix $A(p)$ is equal to $A(p) = M_1 +p\frac{M_2}{4}$, where we obtained $M_1, M_2$ by taking the symmetric part of two random matrices of size $1000\times 1000$. The parameter domain is $[-1,1]$. $X_{cheb}$ is the snapshot matrix where the snapshot points are the Chebyshev points while $X_{equi}$ is the snapshot matrix where the snapshot points are equispaced points. The estimated rate of decay is $\rho \approx 3.1$, which is observed to reflect the actual behavior}.\label{fig: singdecay}
\end{figure}

\Cref{thm:singdecay} justifies the POD approach,
which consists in only taking the first left singular vectors of the snapshot matrix when $r$ is large.
Note that assuming a gap in \Cref{assumption} is fundamental for the analyticity of $\vec{x}$. If the eigenvalue $\lambda(p)$ is not separated from the rest of the spectrum, we can instead consider a larger set of $m$ eigenvalues of $A(p)$
\[ \Lambda(p) = \{\lambda_1(p), \dots, \lambda_m(p)\}.\]
The snapshot matrix is then chosen as
\[ X = [\vec{x}(p_0), \vec{x}(p_1), \dots , \vec{x}(p_{r-1})] \in \C^{n \times mr}\]
where $\vec{x}(p_i) \in \C^{n \times m}$ and $\text{span}(\vec{x}(p_i))$ is the eigenspace associated with $\Lambda(p)$. 

In this case, we make the following assumption.

\begin{assumption}[{\Cref{assumption} for multiple eigenvalues}]\label{assumptionblock}
    Consider the open Bernstein ellipse $E_{\rho} \subset \C$ for $\rho >1$, that is, the open ellipse with foci $\pm 1$ and semi-minor/-major axes summing up to $\rho$. We assume that $A: [-1,1] \rightarrow \C^{n \times n}$ admits an analytic extension on $E_{\rho}$ and continuous on the closure of $E_{\rho}$, denoted as $\widebar{E}_{\rho}$. 
    Furthermore, we assume that the eigenvalues $\{ \lambda_1(p), \dots , \lambda_m(p) \}$ we are considering are separated from the rest of the spectra of $A(p)$ in this way: there exists a closed curve $\Gamma$ independent of $p$ such that $\Gamma$ encloses $\Lambda(p)$ for all $p$ while the rest of the spectrum of $A(p)$ lies outside $\Gamma$. 
\end{assumption}

The following theorem guarantees that we can choose $\vec{x}(p_i)$ to be analytic if $A(p)$ is analytic.

\begin{theorem}\label{thm:analyticblock}
    Under \Cref{assumptionblock}, we can choose $\vec{x}(p) \in \C^{n \times m}$ to depend analytically on $p$.
\end{theorem}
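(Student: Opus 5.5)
The natural tool is the Riesz spectral projector. For $p$ in the domain of \Cref{assumptionblock}, define
\[ P(p) = \frac{1}{2\pi i}\oint_{\Gamma} (zI - A(p))^{-1}\, dz. \]
Because $\Gamma$ does not depend on $p$ and no eigenvalue of $A(p)$ lies on $\Gamma$, the resolvent $(zI-A(p))^{-1}$ is defined for all $z\in\Gamma$. It is jointly continuous in $(z,p)$ and, for each fixed $z$, analytic in $p$, since $A(p)$ is analytic and matrix inversion is analytic on invertible matrices. Differentiating under the integral sign, or equivalently applying Morera's theorem, then shows that $P$ is analytic on $E_{\rho}$ and continuous on $\widebar{E}_{\rho}$. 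By standard spectral theory \cite[\S 2]{Kat1976}, $P(p)$ is the projector onto the invariant subspace associated with $\Lambda(p)$, and its rank is constant, equal to $m$ (counting algebraic multiplicity). The constancy follows because $\operatorname{rank}P(p)=\operatorname{tr}P(p)$ is a continuous integer-valued function on a connected set.

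The remaining task is to turn the analytic family of projectors into an analytic basis $\vec{x}(p)\in\C^{n\times m}$ of $\operatorname{range}P(p)$. I would use Kato's transformation function. Fix $p_*=0$ and let $X_0$ be a basis of $\operatorname{range}P(0)$. Then solve the linear ODE
\[ U'(p) = [P'(p),P(p)]\,U(p), \qquad U(0)=I. \]
Because the coefficient is analytic on the simply connected domain $E_{\rho}$, this ODE has a unique analytic solution there, and $U(p)$ is invertible. One checks that $W(p)=P(p)U(p)-U(p)P(0)$ satisfies a homogeneous linear ODE with $W(0)=0$, using $P^2=P$ and hence $PP'P=0$. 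Therefore $P(p)U(p)=U(p)P(0)$. Setting $\vec{x}(p)=U(p)X_0$ gives an analytic matrix whose columns are linearly independent and span $\operatorname{range}P(p)$, which is the invariant subspace for $\Lambda(p)$.

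\textbf{Main obstacle.} The delicate step is this global extension. Locally, analyticity is easy: $\vec{x}(p)=P(p)X_0$ works wherever $P(p)X_0$ has full rank, i.e.\ near $p=0$. Globally, however, $P(p)X_0$ may lose rank. The ODE construction avoids this, but it requires simple connectivity of $E_{\rho}$ (true for an ellipse) and a verification of the intertwining identity. Continuity up to $\widebar{E}_{\rho}$ requires a little extra care. The cleanest way is to note that the assumption can be applied on a slightly larger ellipse, or to observe that $P'$ is bounded up to the boundary only if $A$ is analytic slightly beyond it. If that fails, I would state the result on the open ellipse $E_{\rho}$ only, which is what is needed for the later bounds, with $\kappa_\rho$ taken on $\partial E_{\rho'}$ for $\rho'<\rho$.
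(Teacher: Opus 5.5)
Your argument is correct, but the globalization step takes a different route from the paper's.

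\textbf{The paper's route.} Both proofs start from the Riesz projector $P(p)$. The paper then notes that $p\mapsto\operatorname{range}P(p)$ is a holomorphic vector bundle over $E_\rho$. It invokes holomorphic triviality of such bundles \cite{For1981} to obtain $m$ global holomorphic sections that are linearly independent at every point.

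\textbf{Your route.} You construct the frame explicitly through Kato's transformation function, and your verification checks out. The difference $W=PU-UP(0)$ satisfies $W'=[P',P]W$ with $W(0)=0$, so $W\equiv 0$. $U$ is invertible: by Liouville's formula $\det U\equiv 1$, since $\operatorname{tr}[P',P]=0$. Hence $\vec{x}(p)=U(p)X_0$ is an analytic full-rank basis of $\operatorname{range}P(p)$.

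\textbf{Comparison.} The paper's argument is shorter. Strictly speaking, it does not need simple connectivity, since holomorphic vector bundles on any open subset of $\C$ are trivial. However, it is non-constructive and rests on a deep theorem. Yours is elementary and self-contained, and it uses simple connectivity essentially, to exclude monodromy of $U$.

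Your construction also gives something extra. If $A(p)$ is Hermitian for real $p$, then $P$ and $P'$ are Hermitian on $E_\rho\cap\R$, so $[P',P]$ is skew-Hermitian there and $U(p)$ is unitary. An orthonormal $X_0$ then yields an analytic basis that stays orthonormal on $E_\rho\cap\R$. This removes the need for the Gram--Schmidt argument after the theorem, and for the paper's extra condition on $\vec{x}_i(p)^T\vec{x}_i(p)$.

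Your boundary caveat is apt. The statement only asserts analyticity on $E_\rho$, and neither proof by itself gives continuity on $\widebar{E}_{\rho}$, which finiteness of $\kappa_\rho$ tacitly requires. Shrinking to a slightly smaller ellipse is the right remedy.
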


\begin{proof}
    According to \cite[Chp. 2 \S 1.4]{Kat1976}, the eigenspace
    \[ M(p) = \text{span}\left(\frac{1}{2\pi i}\int_{\Gamma}(zI - A(p))^{-1}dz\right)\]
    is an analytic vector bundle, where $\Gamma$ is defined in \Cref{assumptionblock}. As $E_{\rho}$ is open and simply connected, the vector bundle $M(p)$ is holomorphically trivial, hence, thanks to \cite[Thm. 30.1]{For1981}, there exist $m$ 
    global holomorphic sections $[\vec{x}_1(p), \dots , \vec{x}_m(p)] \in M(p)$ such that for
   each point $p \in E_{\rho}$ the vectors $[\vec{x}_1(p), \dots , \vec{x}_m(p)]$ are linearly independent.
\end{proof}

\Cref{thm:analyticblock} guarantees that we can select a basis for the eigenspace we are considering that depends analytically on $p$ in $E_{\rho}$. \Cref{thm:singdecay} can be generalized also to this setting ($mr$ takes the place of $r$).

Furthermore, if $A(p)$ is real and symmetric for $p \in E_{\rho} \cap \R$, then we can assume that the eigenvectors $[\vec{x}_1(p), \dots, \vec{x}_m(p)]$ are also orthonormal on  $E_{\rho} \cap \R$ (under the assumption that $\vec{x}_i(p)^T\vec{x}_i(p) \notin \R^{-}$ in $E_{\rho}$ for all $i = 1, \dots ,m$). This is because we can run Gram-Schmidt on the basis $[\vec{x}_1(p), \dots, \vec{x}_m(p)]$ on $E_{\rho} \cap \R$ and preserve the analyticity as $1/\sqrt{(\vec{x}_i(p)^T\vec{x}_i(p))}$ (without complex conjugate) is an analytic map as long as $\vec{x}_i(p)^T\vec{x}_i(p) \notin \R^{-}$ in $E_{\rho}$.

\section{Theoretical results} \label{sec: theory}
\subsection{Theoretical results on interpolation and extrapolation}

In this section, we generalize the theoretical analysis conducted in \cite{Gui2024} to the setting of eigenvalue problems. In particular, we first want to prove the subspace spanned by the snapshot matrix almost contains the desired eigenvector if we choose the snapshot points 
at equispaced points:
\begin{equation}
    \label{eq:equispaced}
p_{0} = -1, \ p_1 = -1 + \Delta t, \ \dots ,\ p_{r-2} = 1- \Delta t, \ p_{r-1} = 1    
\end{equation}
where
\[ \Delta t = \frac{2}{r-1}.\]

We will conduct the theoretical analysis for a simple isolated eigenvalue, but it can be extended to multiple eigenvalues under \Cref{assumptionblock}.
Below, we focus on bounding $\|(I-XX^{\dagger})\vec{x}(p_*)\|_2$; this is a natural quantity that measures how much the span of $X$ contains $\vec{x}(p_*)$ since $XX^{\dagger}\vec{x}(p_*)$ is the orthogonal projection of $\vec{x}(p_*)$ onto $\mbox{span}(X)$. 

\begin{theorem}\label{thm: convergence}
Define the snapshots points as in~\cref{eq:equispaced}, and let $p_*$ be the new parameter value for which we want to solve the eigenvalue problem~\cref{eq:eig}. Under \Cref{assumption}, it holds that, if $p_* \in [-1,1]$ then
\begin{equation}
\bigl\| (I-XX^{\dagger})\vec{x}(p_*) \bigr\|_2
\le
2\
\frac{\kappa_{\rho}}{1 - \rho^{-1}}\left(
1 +
C(r,R)\sqrt{(R+1)}
\right)\rho^{-(R+1)}
\end{equation}
else if $p_* \in (1, \frac{\rho + \rho^{-1}}{2})$ then 
\begin{equation}
\bigl\| (I-XX^{\dagger})\vec{x}(p_*) \bigr\|_2
\le
2 \kappa_{\rho}
\left(
\frac{1}{1 - \delta}
+
\frac{C(r,R)}{(1 - \rho^{-1})\sqrt{\rho^2\delta^2-1}}
\right)
\, \delta^{R+1},
\end{equation}
where $
C(r,R) = \frac{5\sqrt{5}\sqrt{2R+1}\sqrt{r}}{\sqrt{2(r-1)}}
$, for any $R \le \frac{1}{2}\sqrt{r-1},\ \delta = \frac{p_* + \sqrt{p_*^2 - 1}}{\rho} < 1.$
\end{theorem}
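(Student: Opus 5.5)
The plan is to reduce the projection error to a polynomial approximation problem and then follow the argument of \cite{Gui2024}. Since $XX^{\dagger}$ is the orthogonal projector onto $\operatorname{span}(X)$, we have
\[ \|(I-XX^{\dagger})\vec{x}(p_*)\|_2 \le \|\vec{x}(p_*) - X\vec{c}\|_2 \]
for every coefficient vector $\vec{c}\in\C^r$. We therefore choose $\vec{c}$ so that $X\vec{c}$ is the value at $p_*$ of a vector-valued polynomial of degree $R$. That polynomial should approximate $\vec{x}$ well and be computable from the samples $\vec{x}(p_0),\dots,\vec{x}(p_{r-1})$. Concretely, expand $\vec{x}(p)=\sum_{j\ge0}\vec{a}_j T_j(p)$ in Chebyshev polynomials. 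By \Cref{assumption} and the lemma above, $\vec{x}$ is analytic in $E_\rho$ and continuous on $\widebar{E}_\rho$. The standard Bernstein-ellipse estimate then gives $\|\vec{a}_j\|_2\le 2\kappa_\rho\rho^{-j}$, with $\kappa_\rho$ as in \cref{eq:kappa}. Let $\vec{q}(p)$ be the discrete least-squares fit of degree $R$ to $\vec{x}$ at the equispaced nodes \cref{eq:equispaced}. This fit is linear in the samples, so $\vec{q}(p_*)=X\vec{c}$ for an explicit $\vec{c}$. It also reproduces polynomials of degree $\le R$ exactly.

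Next, split $\vec{x}=\vec{x}_R+\vec{e}_R$, where $\vec{x}_R=\sum_{j\le R}\vec{a}_jT_j$ is the truncated expansion. Exact reproduction gives
\[ \vec{x}(p_*)-\vec{q}(p_*)=\vec{e}_R(p_*)-\mathcal{L}_R[\vec{e}_R](p_*), \]
where $\mathcal{L}_R$ denotes the least-squares operator. The first term is bounded as follows:
\begin{itemize}
\item For $p_*\in[-1,1]$, $|T_j(p_*)|\le1$, so $\|\vec{e}_R(p_*)\|\le 2\kappa_\rho\rho^{-(R+1)}/(1-\rho^{-1})$.
\item For $p_*>1$, use $|T_j(p_*)|\le(p_*+\sqrt{p_*^2-1})^j=(\rho\delta)^j$. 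This gives $\|\vec{e}_R(p_*)\|\le 2\kappa_\rho\delta^{R+1}/(1-\delta)$, which is the $\frac{1}{1-\delta}$ term in the statement.
\end{itemize}

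For the second term, write $\mathcal{L}_R[\vec{e}_R](p_*)=\sum_{k\le R}\vec{b}_k\,\phi_k(p_*)$, where $\{\phi_k\}$ is a basis orthonormal with respect to the discrete inner product $\frac{2}{r}\sum_i f(p_i)\overline{g(p_i)}$ on the nodes. By Cauchy--Schwarz,
\[ \|\mathcal{L}_R[\vec{e}_R](p_*)\|_2\le\Bigl(\sum_{k\le R}|\phi_k(p_*)|^2\Bigr)^{1/2}\Bigl(\sum_k\|\vec{b}_k\|^2\Bigr)^{1/2}. \]
The coefficient factor is at most the discrete $\ell^2$ norm of $\vec{e}_R$ on the nodes. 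That is controlled by $\sqrt{r}\,\max_{[-1,1]}\|\vec{e}_R\|$, and after normalization this produces $\sqrt{r/(2(r-1))}$ times the tail bound $2\kappa_\rho\rho^{-(R+1)}/(1-\rho^{-1})$.

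The main obstacle is the evaluation factor $\bigl(\sum_{k\le R}|\phi_k(p_*)|^2\bigr)^{1/2}$. For this I would use the condition $R\le\frac12\sqrt{r-1}$. In this regime, the known results on least squares at equispaced points (the regime used in \cite{Gui2024}) say that the discrete and continuous $L^2$ norms of polynomials of degree $\le R$ are equivalent up to a constant factor. The $\phi_k$ are then comparable to normalized Legendre polynomials $\sqrt{(2k+1)/2}\,P_k$. This gives the following bounds:
\begin{itemize}
\item For $p_*\in[-1,1]$, the factor is $\lesssim\sqrt{\sum_{k\le R}(2k+1)}$. After combining with the coefficient bound, this accounts for $\sqrt{R+1}\sqrt{2R+1}$ and the constant $5\sqrt5$.
\item For $p_*>1$, I would bound $|P_k(p_*)|$ by $(\rho\delta)^k$ and sum the geometric series. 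This produces the factor $(\rho\delta)^{R+1}/\sqrt{\rho^2\delta^2-1}$. Multiplying by the tail $\rho^{-(R+1)}$ gives $\delta^{R+1}C(r,R)/\bigl((1-\rho^{-1})\sqrt{\rho^2\delta^2-1}\bigr)$, matching the statement.
\end{itemize}

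Pinning down the exact constant $5\sqrt5$ and the factor $\sqrt{r}/\sqrt{2(r-1)}$ from the norm-equivalence lemma is where the technical care lies. I would import that lemma from \cite{Gui2024} rather than reprove it.
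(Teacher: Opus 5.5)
Your reduction to the degree-$R$ discrete least-squares fit, the identity $\vec{x}(p_*)-\vec{q}(p_*)=\vec{e}_R(p_*)-\mathcal{L}_R[\vec{e}_R](p_*)$ (the paper's split into $\vec{x}-\vec{x}_R$ and $\vec{x}_R-\vec{p}_R$), and both tail estimates match the paper. The gap is in the bound on $\mathcal{L}_R[\vec{e}_R](p_*)$: your Legendre/norm-equivalence route does not produce $C(r,R)$.

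In the paper this term is bounded by $\|C_x-C_p\|_2\|\vec{q}_R(p_*)\|_2$, with $\vec{q}_R(p_*)=[T_0(p_*),\dots,T_R(p_*)]^T$. The imported bound on $\|C_x-C_p\|_2$ is in effect $\|E\|_F/\sigma_{\min}(Q_R)$. Here $E$ collects the nodal values $\vec{e}_R(p_i)$, and $Q_R$ is the $r\times(R+1)$ Chebyshev--Vandermonde matrix at the equispaced nodes. So $C(r,R)$ is $\sqrt{r}$ (from $\|E\|_F\le\sqrt{r}\max\|\vec{e}_R\|_2$) times the reciprocal of a lower bound on $\sigma_{\min}(Q_R)$. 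Compare the Chebyshev-node version, where the paper plugs $\sigma_{\min}(Q_R)\ge\sqrt{(r-1)/2}$ into the same slot.

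You never state the constant $K$ in your norm equivalence, and the bookkeeping is off in two places:
\begin{itemize}
\item With your $\frac{2}{r}$-weighted inner product, Bessel gives $(\sum_k\|\vec{b}_k\|_2^2)^{1/2}\le\sqrt{2}\max\|\vec{e}_R\|_2$. No factor $\sqrt{r/(2(r-1))}$ arises there.
\item $\sqrt{\sum_{k\le R}(2k+1)}=R+1$, not $\sqrt{R+1}\sqrt{2R+1}$.
\end{itemize}
More seriously, suppose you build $K$ from the only stability input actually available, the lower bound on $\sigma_{\min}(Q_R)$, using $\|q\|_{L^2}^2\le\pi\|\vec{a}\|_2^2$ for $q=\sum_j a_jT_j$. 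Then the Legendre comparison gives an evaluation factor of $\sqrt{\pi/2}\,(R+1)/\sigma_{\min}(Q_R)$ for $p_*\in[-1,1]$, instead of $\sqrt{R+1}/\sigma_{\min}(Q_R)$. For $p_*>1$ it costs an extra factor $\sqrt{\pi(2R+1)/2}$. Either way you lose a factor of order $\sqrt{R}$, so importing from \cite{Gui2024} does not close your argument. Also, norm equivalence does not make the individual $\phi_k$ comparable to $\sqrt{(2k+1)/2}\,P_k$. The rigorous statement is the extremal characterization $\sum_k|\phi_k(p_*)|^2=\max_{\deg q\le R}|q(p_*)|^2/\|q\|_d^2$.

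The simplest fix is to evaluate your own Cauchy--Schwarz bound in the Chebyshev basis. Take the unweighted nodal inner product and the thin SVD $Q_R=U\Sigma V^T$. Then the discrete orthonormal basis is $\vec{\phi}(t)^T=\vec{q}_R(t)^TV\Sigma^{-1}$ (its nodal matrix is $U$), and
\[
\Bigl(\sum_{k\le R}|\phi_k(p_*)|^2\Bigr)^{1/2}\le\frac{\|\vec{q}_R(p_*)\|_2}{\sigma_{\min}(Q_R)},
\qquad
\Bigl(\sum_{k\le R}\|\vec{b}_k\|_2^2\Bigr)^{1/2}\le\|E\|_F\le\sqrt{r}\,\frac{2\kappa_\rho}{1-\rho^{-1}}\,\rho^{-(R+1)}.
\]
The product of these two factors is exactly the paper's bound. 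Using $\|\vec{q}_R(p_*)\|_2\le\sqrt{R+1}$ on $[-1,1]$ and $\|\vec{q}_R(p_*)\|_2<(\rho\delta)^{R+1}/\sqrt{\rho^2\delta^2-1}$ for $p_*>1$ then yields both stated inequalities.

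Alternatively, you can keep the Legendre route if you prove an $R$-uniform equivalence $\|q\|_{L^2[-1,1]}\le K\|q\|_d$ for $\deg q\le R\le\frac12\sqrt{r-1}$, with an explicit $K\le 5\sqrt5/\sqrt2$. One way is an $L^2$ Markov inequality plus a trapezoidal-rule comparison. That would give $K(R+1)$ and $K\sqrt{2R+1}$ in place of $C(r,R)\sqrt{R+1}$ and $C(r,R)$, which suffices. But that lemma is not what the paper relies on, and you would have to prove it yourself.
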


\begin{proof}
    We follow the argument in \cite[Thm $5$]{Gui2024}. Let $\vec{p}(t)$ be a vector-valued polynomial. Assume also that $\vec{p}(t) \in \mbox{span}(X)$ for all $t$. Since the orthogonal projection $XX^{\dagger}\vec{x}(p_*)$ finds the closest point in $\mbox{span}(X)$ to $\vec{x}(p_*)$, we have
    \[ \bigl\| (I-XX^{\dagger})\vec{x}(p_*) \bigr\|_2
\le \|\vec{x}(p_*)-\vec{p}(p_*)\|_2.\]
In particular, an upper bound on $\bigl\| (I-XX^{\dagger})\vec{x}(p_*) \bigr\|_2$ is given by the best polynomial approximation in $p_*$. The result in \cite[Thm $5$]{Gui2024} gives un upper bound on this quantity.

Let $\vec{x}_R(t) = \sum_{k=0}^R \vec{c}_k T_k(t)$ be the truncated Chebyshev series 
(which converges geometrically by the analyticity assumption~\cite[Thm. 8.2]{Tre2020})
of $\vec{x}(t)$ up to degree $R$, and let $\vec{p}_R(t) = \sum_{k=0}^R \vec{a}_k T_k(t) \in \text{span}(X)$ be the least-square polynomial constructed on the snapshot points.
According to \cite[Lemma 4]{Gui2024} the coefficients satisfy $\|\vec{c}_k\|_2 \le 2\kappa_\rho \rho^{-k}$. Furthermore, defining $C_x = [\vec{c}_0, \dots, \vec{c}_R]$, $C_p = [\vec{a}_0, \dots, \vec{a}_R]$, and $\vec{q}_R(t) = [q_0(t), \dots, q_R(t)]^T$, we have that 
\begin{equation}\label{eq: cxcp} \|C_x - C_p\|_2 \le \frac{2\kappa_\rho}{1 - \rho^{-1}} C(r,R) \rho^{-(R+1)}.
\end{equation}
 By the triangle inequality, the error can be split as
\[
\|\vec{x}(p_*) - \vec{p}_R(p_*)\|_2 \le \|\vec{x}(p_*) - \vec{x}_R(p_*)\|_2 + \|\vec{x}_R(p_*) - \vec{p}_R(p_*)\|_2.
\]
We divide the proof into two cases.

\noindent\textbf{Case 1: $\boxed{p_* \in [-1,1]}$.}
Since $p_* \in [-1,1]$, the Chebyshev polynomials satisfy $|T_k(p_*)| \le 1$ for all $k \ge 0$. It holds that
\[
\|\vec{x}(p_*) - \vec{x}_R(p_*)\|_2 \le \sum_{k=R+1}^\infty \|\vec{c}_k\|_2 |T_k(p_*)| \le 2\kappa_\rho \sum_{k=R+1}^\infty \rho^{-k} = \frac{2\kappa_\rho}{1 - \rho^{-1}} \rho^{-(R+1)}.
\]
For the second term,  we have:
\[
\|\vec{x}_R(p_*) - \vec{p}_R(p_*)\|_2 = \|(C_x - C_p)\vec{q}_R(p_*)\|_2 \le \|C_x - C_p\|_2 \|\vec{q}_R(p_*)\|_2.
\]
and we can substitute \eqref{eq: cxcp} and $\|\vec{q}_R(p_*)\|_2 = \sqrt{\sum_{k=0}^R |T_k(p_*)|^2} \le \sqrt{R+1}.$

\noindent\textbf{Case 2: $\boxed{p_* \in \left(1, \frac{\rho + \rho^{-1}}{2}\right)}$.} Outside the interval $[-1,1]$, the growth of the Chebyshev polynomials is bounded by $|T_k(p_*)| \le (\rho \delta)^k$, where $\delta = \frac{p_* + \sqrt{p_*^2 - 1}}{\rho} < 1$~\cite{mason2010chebyshev}. Consequently:
\[
\|\vec{q}_R(p_*)\|_2 = \sqrt{\sum_{k=0}^R |T_k(p_*)|^2} \le \sqrt{\sum_{k=0}^R (\rho\delta)^{2k}} = \sqrt{\frac{(\rho\delta)^{2(R+1)} - 1}{\rho^2\delta^2 - 1}} < \frac{(\rho\delta)^{R+1}}{\sqrt{\rho^2\delta^2 - 1}}.
\]

By applying this bound to the same error splitting we obtain the second inequality of the theorem.
\end{proof}

Note that the two results in \Cref{thm: convergence} account for parameter-dependent and time-dependent eigenvalue problems respectively, as the latter allows us to extrapolate in $p$, while the former focuses on interpolating.
A result analogous to \Cref{thm: convergence} can be proved for snapshot points that are not equispaced but are equal to the Chebyshev points. Although this is not the standard scenario for time-dependent problems (as the snapshot points are not nested as $r$ increases), it is useful for parameter-dependent problems, and a natural choice given the prominence of Chebyshev interpolation for approximating univariate functions, where it is vastly superior to equispaced interpolation~\cite{Tre2020}; however, in our context, there is much less evidence (theoretically and empirically) that there is a substantial difference. In particular, the Runge phenomenon is not observed as $\|(I-XX^{\dagger})\vec{x}(p)\|$ is tipically decreasing in the number of parameters.

\begin{theorem}[\Cref{thm: convergence} for Chebyshev points]\label{thm: convergencecheb}
Define the \textit{snapshots points} as the Chebyshev points, i.e. 
\[ p_i = \cos\left(\frac{i \pi}{r-1}\right), \quad i = 0, \dots, r-1\]
and let $p_*$ be the new parameter value for which we want to solve the eigenvalue problem. Under \Cref{assumption}, it holds that, if $p_* \in [-1,1]$ then

\begin{equation}
\bigl\| (I-XX^{\dagger})\vec{x}(p_*) \bigr\|_2
\le
2\
\frac{\kappa_{\rho}}{1 - \rho^{-1}}\left(
1 +
\frac{\sqrt{2}r}{\sqrt{r-1}}
\right)\rho^{-r}
\end{equation}
else if $p_* \in (1, \frac{\rho + \rho^{-1}}{2})$ then 
\begin{equation}
\|(I-XX^\dagger)\vec x(p_*)\|_2
\le
2\kappa_\rho
\left(
\frac{1}{1-\delta}
+
\frac{\sqrt{r}}
{(1-\rho^{-1})\sqrt{(\rho^2\delta^2-1)(r-1)}}
\right)\delta^{r}
\end{equation}
for any $\delta = \frac{p_* + \sqrt{p_*^2 - 1}}{\rho} < 1.$    
\end{theorem}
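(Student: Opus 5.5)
The plan follows the proof of \Cref{thm: convergence}, with one change: the discrete least-squares fit on equispaced points is replaced by polynomial interpolation at the Chebyshev points. With $r$ snapshot points $p_0,\dots,p_{r-1}$ (the Chebyshev extrema $\cos(i\pi/(r-1))$), there is a unique vector-valued interpolant $\vec{p}_{r-1}(t)=\sum_{i=0}^{r-1}\vec{x}(p_i)\ell_i(t)$ of degree $r-1$, where the $\ell_i$ are the Lagrange basis polynomials. Its coefficients are columns of $X$, so $\vec{p}_{r-1}(t)\in\operatorname{span}(X)$ for every $t$. Since $XX^\dagger\vec{x}(p_*)$ is the orthogonal projection, we get
\[
\bigl\|(I-XX^\dagger)\vec{x}(p_*)\bigr\|_2\le\|\vec{x}(p_*)-\vec{p}_{r-1}(p_*)\|_2 .
\]

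Write $\vec{x}_{R}(t)=\sum_{k=0}^{R}\vec{c}_kT_k(t)$ with $R=r-1$ for the truncated Chebyshev series, whose coefficients satisfy $\|\vec{c}_k\|_2\le 2\kappa_\rho\rho^{-k}$ by \cite[Lemma 4]{Gui2024}. Expand the interpolant in the same basis, $\vec{p}_{R}(t)=\sum_{k=0}^{R}\vec{a}_kT_k(t)$, and split the error by the triangle inequality exactly as before:
\[
\|\vec{x}(p_*)-\vec{p}_{R}(p_*)\|_2\le\|\vec{x}(p_*)-\vec{x}_{R}(p_*)\|_2+\|(C_x-C_p)\vec{q}_{R}(p_*)\|_2 .
\]
The first term is handled as in \Cref{thm: convergence}. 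For $p_*\in[-1,1]$ it is bounded by $\frac{2\kappa_\rho}{1-\rho^{-1}}\rho^{-r}$. For $p_*>1$ it is bounded by $2\kappa_\rho\sum_{k\ge r}\delta^k=\frac{2\kappa_\rho}{1-\delta}\delta^r$, using $|T_k(p_*)|\le(\rho\delta)^k$.

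The main obstacle is the coefficient error $\|C_x-C_p\|_2$, which here replaces \eqref{eq: cxcp}. The key fact is aliasing at the Chebyshev extrema: $T_k$ restricted to these points coincides with $\pm T_j$ for some $j\le R$, namely $j=\pm k \bmod 2R$ folded into $[0,R]$. Hence $\vec{a}_j-\vec{c}_j$ is a sum of the tail coefficients $\vec{c}_k$, $k\ge r$, that alias onto $j$ (with a factor at the endpoints $j=0,R$). I would write $C_p-C_x=\sum_{k\ge r}\vec{c}_k\vec{e}_{j(k)}^T$, up to these signs and factors, and bound it in norm. Either the Frobenius norm of the tail or the crude bound $\sum_{k\ge r}\|\vec{c}_k\|_2\le\frac{2\kappa_\rho}{1-\rho^{-1}}\rho^{-r}$ should work, up to a modest constant.

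The stated constant $\frac{\sqrt{2}r}{\sqrt{r-1}}$ suggests the authors instead passed through the least-squares/interpolation operator with a discrete-orthogonality norm estimate. In the Chebyshev case the normalized Vandermonde matrix $\sqrt{2/(r-1)}\,[T_k(p_i)]$ is nearly orthogonal (with weights), which gives a factor like $\sqrt{2r/(r-1)}$ in the coefficient error. Multiplying by $\|\vec{q}_R(p_*)\|_2\le\sqrt{r}$ yields $\frac{\sqrt{2}\,r}{\sqrt{r-1}}$, which matches the stated bound. I would try to reproduce exactly this: bound $\|C_x-C_p\|_2\le\frac{2\kappa_\rho}{1-\rho^{-1}}\frac{\sqrt{2r}}{\sqrt{r-1}}\rho^{-r}$ via the discrete orthogonality of Chebyshev polynomials at the extrema.

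Finally, the second term is closed with $\|\vec{q}_R(p_*)\|_2\le\sqrt{r}$ on $[-1,1]$. Outside the interval, I would use $\|\vec{q}_R(p_*)\|_2<(\rho\delta)^{r}/\sqrt{\rho^2\delta^2-1}$. Multiplying by $\rho^{-r}$ turns this into $\delta^r$, which gives the second bound up to the stated constants.
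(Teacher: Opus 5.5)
Your second route is the paper's proof. The paper writes $C_p-C_x=E(Q_R^T)^{-1}$ with $E=[\vec x(p_0)-\vec x_R(p_0),\dots,\vec x(p_{r-1})-\vec x_R(p_{r-1})]$. It then bounds $\|E\|_2\le\sqrt r\,\frac{2\kappa_\rho}{1-\rho^{-1}}\rho^{-r}$ and uses $\sigma_{\min}(Q_R)\ge\sqrt{(r-1)/2}$ for the Chebyshev--Vandermonde matrix. That bound does follow from discrete orthogonality: $Q_R^TQ_R\succeq Q_R^TDQ_R=\operatorname{diag}(R,\tfrac R2,\dots,\tfrac R2,R)$ with $D=\operatorname{diag}(\tfrac12,1,\dots,1,\tfrac12)$. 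Note that the $\sqrt r$ in $\sqrt{2r/(r-1)}$ comes from $\|E\|_2$, not from the orthogonality.

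Your first route, aliasing, is genuinely different, already complete, and sharper, so commit to it rather than hedging. At the extrema $T_k(p_i)=T_{j(k)}(p_i)$, always with a plus sign and no endpoint factor \cite[Chap.~4]{Tre2020}. Hence $E=\bigl(\sum_{k\ge r}\vec c_k\vec e_{j(k)}^T\bigr)Q_R^T$, so aliasing computes $E(Q_R^T)^{-1}$ exactly instead of bounding it through $\sigma_{\min}$. It gives $\|C_p-C_x\|_2\le\sum_{k\ge r}\|\vec c_k\|_2\le\frac{2\kappa_\rho}{1-\rho^{-1}}\rho^{-r}$, i.e.\ the factor $\sqrt{2r/(r-1)}$ becomes $1$. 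Both stated bounds then follow with room to spare. In the interior case you can even use $\|\sum_{k\ge r}\vec c_kT_{j(k)}(p_*)\|_2\le\sum_{k\ge r}\|\vec c_k\|_2$ and drop the $\sqrt r$.

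This matters for the exterior bound. Along the $\sigma_{\min}$ route the second term carries $\sqrt{2r/(r-1)}$, a factor $\sqrt2$ larger than the $\sqrt{r/(r-1)}$ in the statement. So that route (and your ``up to the stated constants'' under it) does not literally reach the stated constant, whereas aliasing does.

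The paper's route buys a single template shared with the equispaced theorem, and it would also cover least-squares fits of degree $R<r-1$. The aliasing argument is tied to full-degree interpolation at the Chebyshev extrema, but there it is strictly sharper.
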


\begin{proof}
    The proof is the same as for \Cref{thm: convergence} and \cite[Thm 3]{Gui2024} but, for bounding $\|C_x - C_p\|_2$ we use that 
    \[ \sigma_{\min}(Q_R(\vec{p}_{cheb})) \geq \sqrt{\frac{r-1}{2}}\]
    as stated in \cite{Str1999}, also reported in \cite{Dem2019}. The $Q_R(\vec{p}_{cheb})$ is the matrix whose entry $(i,j)$ is equal to the $j$-th Chebyshev polynomial evaluated at the $i$-th Chebyshev point. 
\end{proof}

This is in accordance with the scalar polynomial approximation \cite[Thm 8.2]{Tre2020}. As for our knowledge, this is a novel approach to show the convergence properties of a reduced basis method. We can further generalize \Cref{thm: convergencecheb} by proving that the error decays exponentially in $r$ for any distribution of parameters that converge to a unit positive Borel measure $\mu$ in $[-1,1]$ in the weak-star topology.
The \textit{logarithmic potential} associated with a measure $\mu$ on $[-1,1]$ is defined as the function~\cite{Saf2010}
\[ U^{\mu}(z) = \int_{-1}^1\log\frac{1}{|z-t|}d\mu(t). \]

\begin{theorem}\label{thm: allmeasure}
    Consider a unit positive Borel measure $\mu$ on $[-1,1]$. Define the snapshot points $p_0, \dots, p_{r-1}$ such that $p_i \neq p_j$ and the empirical measure
    \[ \mu_r = \frac1r\sum_{i = 0}^{r-1} \delta(p_i)\]
    converges to $\mu$ in the weak-star topology as $r\rightarrow \infty$. Let $p_*$ be a new parameter value (inside or outside $[-1,1]$) such that $U^\mu(p_*)<+\infty$.
    Then,
    \[ \limsup_{r \rightarrow \infty}\bigl\| (I-XX^{\dagger})\vec{x}(p_*) \bigr\|_2^{1/r} \leq e^{-c} <1 \]
    for some constant $c>0$ that may depend on $p_*$,  under the assumption that $\rho$ in \Cref{assumption} is large enough such that 
    \[ \max_{z \in \partial E_{\rho}}U^{\mu}(z)  <U^{\mu}(p_*).\]
\end{theorem}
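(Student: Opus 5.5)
Instead of best polynomial approximation, I will use polynomial \emph{interpolation} at all $r$ snapshot points, with the remainder expressed by the Hermite contour-integral formula. The key observation is that if $\vec{p}_{r-1}(t)=\sum_{i=0}^{r-1}\ell_i(t)\vec{x}(p_i)$ is the vector-valued Lagrange interpolant of $\vec{x}$ at the distinct nodes $p_0,\dots,p_{r-1}$, then $\vec{p}_{r-1}(t)\in\operatorname{span}(X)$ for every $t$. As in the proof of \Cref{thm: convergence}, this gives
\[
\bigl\|(I-XX^{\dagger})\vec{x}(p_*)\bigr\|_2\le\|\vec{x}(p_*)-\vec{p}_{r-1}(p_*)\|_2 .
\]
Write $\omega_r(z)=\prod_{i=0}^{r-1}(z-p_i)$. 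Since $\vec{x}$ is analytic in $E_\rho$ and continuous on $\widebar{E}_\rho$ (by the lemma after \Cref{assumption}), the Hermite formula applied componentwise gives
\[
\vec{x}(p_*)-\vec{p}_{r-1}(p_*)=\frac{1}{2\pi i}\int_{\partial E_{\rho'}}\frac{\omega_r(p_*)}{\omega_r(z)}\frac{\vec{x}(z)}{z-p_*}\,dz
\]
for any $\rho'<\rho$ with $p_*$ inside $E_{\rho'}$, and then $\rho'\to\rho$ by continuity. The hypothesis needs $p_*\in E_\rho$, which should follow from $U^\mu(p_*)>\max_{\partial E_\rho}U^\mu$ because $U^\mu$ is harmonic and decreasing to $-\infty$ outside the support. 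If $p_*$ lies outside $E_\rho$, I would instead shrink to an ellipse containing $p_*$ on which the strict inequality still holds, using continuity and the maximum principle. This yields
\[
\|\vec{x}(p_*)-\vec{p}_{r-1}(p_*)\|_2\le\frac{\operatorname{len}(\partial E_\rho)\,\kappa_\rho}{2\pi\,\operatorname{dist}(p_*,\partial E_\rho)}\cdot\frac{|\omega_r(p_*)|}{\min_{z\in\partial E_\rho}|\omega_r(z)|}.
\]

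Taking $r$-th roots, the prefactor tends to $1$. Moreover $|\omega_r(z)|^{1/r}=\exp(-U^{\mu_r}(z))$, so
\[
\limsup_{r\to\infty}\|\cdot\|^{1/r}\le\limsup_{r\to\infty}\exp\Bigl(-U^{\mu_r}(p_*)+\max_{z\in\partial E_\rho}U^{\mu_r}(z)\Bigr).
\]
For $z\in\partial E_\rho$, the kernel $\log\frac{1}{|z-t|}$ is continuous in $t\in[-1,1]$ and uniformly equicontinuous in $z$. Weak-star convergence therefore gives $U^{\mu_r}\to U^\mu$ uniformly on $\partial E_\rho$, so the maximum converges to $\max_{\partial E_\rho}U^\mu$.

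At $p_*$, the harder case is $p_*\in[-1,1]$, since there the kernel is singular. I only need an upper bound on $|\omega_r(p_*)|^{1/r}$, i.e.\ $\liminf_r U^{\mu_r}(p_*)\ge U^\mu(p_*)$. This is the lower envelope / principle of descent: truncate the kernel as $\min(\log\frac1{|p_*-t|},M)$, which is continuous, so $U^{\mu_r}(p_*)\ge\int\min(\cdot,M)\,d\mu_r\to\int\min(\cdot,M)\,d\mu$. Then let $M\to\infty$ by monotone convergence, using $U^\mu(p_*)<\infty$; if one wants to allow $U^\mu(p_*)=\infty$, the truncation gives an arbitrarily large lower bound anyway. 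If $p_*=p_i$ for some $i$, the error is zero trivially. Combining these,
\[
\limsup_{r\to\infty}\bigl\|(I-XX^{\dagger})\vec{x}(p_*)\bigr\|_2^{1/r}\le e^{-c},\qquad c=U^\mu(p_*)-\max_{\partial E_\rho}U^\mu>0 .
\]

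The main obstacle I expect is this one-sided convergence of $U^{\mu_r}(p_*)$ for $p_*$ in the support of $\mu$, via the truncation argument above, together with the justification that $p_*$ lies inside the contour.
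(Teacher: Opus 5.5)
Your proposal is correct and follows the paper's proof essentially step for step. The shared steps are: the interpolant at the $r$ snapshot points lies in $\operatorname{span}(X)$; the Hermite contour formula is applied on $\partial E_\rho$; you use $|\Pi_r(z)|^{1/r}=\exp(-U^{\mu_r}(z))$ and uniform convergence of $U^{\mu_r}$ on $\partial E_\rho$; and you use the lower-envelope inequality $\liminf_r U^{\mu_r}(p_*)\ge U^\mu(p_*)$, which the paper cites from Saff's Lemma 1.6 and you prove directly by truncating the kernel.

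Your maximum-principle argument that the hypothesis forces $p_*\in E_\rho$ fills a step the paper leaves implicit. It also makes your fallback about shrinking the ellipse unnecessary, and that fallback could not work as stated anyway, since a smaller ellipse cannot contain a point outside $E_\rho$.
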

\begin{proof}
The proof combines the Hermite interpolation formula \cite[Eq. 4.2]{Saf2010} \cite[\S 11]{Tre2020}. with logarithmic potential theory \cite{Saf2010}
\cite[\S 12]{Tre2020}. Define
\[ \Pi_r(z) = \prod_{i=0}^{r-1} (z - p_i).\]
According to \cite[Thm. 11.1]{Tre2020}, the polynomial interpolant at the snapshot points can be written as 
\[ \vec{p}(p_*) = \frac{1}{2\pi i}\int_{\Gamma}\frac{\vec{x}(z)(\Pi_r(z)-\Pi_r(p_*))}{\Pi_r(z)(z-p_*)}dz\]
where $\Gamma$ is the boundary of the Bernstein ellipse in \Cref{assumption}.
Note that $\vec{p}(p_*) \in \mbox{span}(X)$. The error $ \vec{x}(p_*) -\vec{p}(p_*)$ satisfies
\[ \vec{x}(p_*) -\vec{p}(p_*) = \frac{1}{2\pi i}\int_{\Gamma}\frac{\vec{x}(z)\Pi_r(p_*)}{\Pi_r(z)(z-p_*)}dz.\]
 It follows that
\begin{align*} \bigl\| (I-XX^{\dagger})\vec{x}(p_*) \bigr\|_2 &\le   \bigl\| \vec{x}(p_*) - \vec{p}(p_*)  \bigr\|_2  \\
&\le \frac{1}{2\pi}\kappa_{\rho}\oint_{\Gamma}\frac{|\Pi_r(p_*)|}{|z-p_*||\Pi_r(z)|}|dz|
\end{align*}
We then have to bound
\begin{equation}\label{eq: Hermite}
\oint_{\Gamma}\frac{|\Pi_r(p_*)|}{|z-p_*||\Pi_r(z)|}|dz|.
\end{equation}
Using the logarithmic potential theory in \cite{Saf2010}, we know that, if $z \notin [-1,1]$,
\[ \frac1r \log\frac{1}{|\Pi_r(z)|} = U^{\mu_r}(z)\]
which implies, as $\mu_r \overset{\ast}{\rightharpoonup} \mu$, that
\[ \lim_{r \rightarrow \infty} \frac1r \log\frac{1}{|\Pi_r(z)|} = U^{\mu}(z)\]
or equivalently
\[ \lim_{r \rightarrow \infty}  |\Pi_r(z)|^{\frac1r} = \exp(-U^{\mu}(z)).\]
This limit holds uniformly on compact sets outside $[-1,1]$.
If $p_* \in [-1,1]$, the situation is more delicate as $U^{\mu_r}(z)$ is not continuous on $[-1,1]$. For this purpose, we use \cite[Lem 1.6]{Saf2010} to guarantee that
\[ \liminf_{r \rightarrow \infty}\frac1r \log\frac{1}{|\Pi_r(z)|} \geq U^{\mu}(z) \]
\[ \limsup_{r \rightarrow \infty} {|\Pi_r(z)|^{\frac1r}} \leq \exp(-U^{\mu}(z)). \]
This implies that, given an $\varepsilon >0$, there exists $r_0>0$ such that for all $r \geq r_0$
\[\oint_{\Gamma}\frac{|\Pi_r(p_*)|}{|z-p_*||\Pi_r(z)|}|dz| \leq \frac{1}{dist(\Gamma,p_*)}\oint_{\Gamma}\exp( -r(U^{\mu_r}(p_*) - U^{\mu_r}(z))|dz| \leq\]
\[ \leq \frac{1}{dist(\Gamma,p_*)}|\Gamma|\exp( -r(U^{\mu}(p_*) - \max_{z \in \Gamma}U^{\mu}(z) - 2\varepsilon)).\]
Under our assumption, we can find an $\varepsilon_0$ such that for all $\varepsilon \leq \varepsilon_0$
\[ U^{\mu}(p_*) - \max_{z \in \Gamma}U^{\mu}(z) -2\varepsilon > 0. \]
This concludes the proof.
\end{proof}

 This asymptotic rate was already derived in \cite[Thm 12.1]{Tre2020} in the context of polynomial approximation for uniformly distributed points.
Note that the assumption 
\[ \max_{z \in \partial E_{\rho}}U^{\mu}(z)  <U^{\mu_r}(p_*) \] is easily satisfied if $p_*$ is close to $[-1,1]$ and $\rho$ is large enough, as $U^{\mu}(z)$ decays to $-\infty$ as fast as $2\log\frac{1}{dist(z,[-1,1])}$.
Furthermore, we can give precise rates of convergence using this result.

\begin{theorem}\label{thm: asymptoticrate}
    Consider a unit positive Borel measure on $[-1,1]$. Define the snapshot points $p_0, \dots, p_{r-1}$ such that $p_i \neq p_j$ and the measure
    \[ \mu_r = \frac1r\sum_{i = 0}^{r-1} \delta(p_i)\]
    converges to $\mu$ in the weak-star topology. Let $p_*$ be a new parameter value (inside or outside $[-1,1]$). Then, asymptotically, the convergence is rate is given by the following expressions:
    \begin{itemize}
        \item The measure $\mu$ is the arcsine distribution $ \text{d}\mu(x) = \frac{\text{d}x}{\pi\sqrt{1-x^2}}$
        and the snapshot points $\{p_i\}_{i=0}^{r-1}$ are the Chebyshev points, and $p_* \in [-1,1]$, then
        \[ \limsup_{r \rightarrow \infty}\bigl\| (I-XX^{\dagger})\vec{x}(p_*) \bigr\|_2^{1/r} \le \rho^{-1}\]
        \item  The measure $\mu$ is the arcsine distribution 
        and the snapshot points $\{p_i\}_{i=0}^{r-1}$ are the Chebyshev points, and $p_* \notin [-1,1]$, then
        \[ \limsup_{r \rightarrow \infty}\bigl\| (I-XX^{\dagger})\vec{x}(p_*) \bigr\|^{1/r} \le \delta\]
        where
        \[ \delta = \frac{|p_*| + \sqrt{p_*^2 - 1}}{\rho}.\]
        \item Assuming $\rho$ is large enough, the measure $\mu$ is the uniform distribution 
        and the snapshot points $\{p_i\}_{i=0}^{r-1}$ are the equidistributed points, and $p_* \in [-1,1]$, then
        \[  \limsup_{r \rightarrow \infty}\bigl\| (I-XX^{\dagger})\vec{x}(p_*) \bigr\|_2^{1/r} \le \left(  \frac{2}{e \rho} (p_*+1)^{\frac{p_*+1}{2}}(1-p_*)^{\frac{1-p_*}{2}}\right)\le\frac{4}{e\rho}\]
        \item Assuming $\rho$ is large enough, the measure $\mu$ is the uniform distribution 
        and the snapshot points $\{p_i\}_{i=0}^{r-1}$ are the equidistributed points, and $p_* >1$, then
        \[  \limsup_{r \rightarrow \infty}\bigl\| (I-XX^{\dagger})\vec{x}(p_*) \bigr\|^{1/r} \le \left(  \frac{2}{e \rho} \frac{(p_*+1)^{\frac{p_*+1}{2}}}{(p_*-1)^{\frac{p_*-1}{2}}}\right)\]
    \end{itemize}
\end{theorem}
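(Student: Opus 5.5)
The plan is to run the Hermite-formula argument from the proof of \Cref{thm: allmeasure} with explicit logarithmic potentials. That proof shows that for any $\varepsilon>0$ and $r$ large,
\[
\bigl\| (I-XX^{\dagger})\vec{x}(p_*) \bigr\|_2 \le \frac{\kappa_\rho |\Gamma|}{2\pi\, \mathrm{dist}(\Gamma,p_*)} \exp\Bigl(-r\bigl(U^{\mu}(p_*) - \max_{z\in\Gamma} U^{\mu}(z) - 2\varepsilon\bigr)\Bigr),
\]
where $\Gamma=\partial E_{\rho}$. Taking $r$-th roots and letting $\varepsilon\to 0$ gives
\[
\limsup_{r\to\infty}\bigl\| (I-XX^{\dagger})\vec{x}(p_*) \bigr\|_2^{1/r}\le \exp\bigl(\max_{\Gamma}U^\mu - U^\mu(p_*)\bigr).
\]
In the extrapolation cases we may shrink $\rho$ slightly so that $p_*$ lies strictly inside $E_\rho$ and $\mathrm{dist}(\Gamma,p_*)>0$; the bound then follows by continuity in $\rho$. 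So everything reduces to computing $U^\mu$ for the two measures, at $p_*$ and on $\partial E_\rho$.

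For the arcsine measure, use the classical formula $U^{\mu}(z) = \log 2 - \log|z+\sqrt{z^2-1}|$, with the branch satisfying $|z+\sqrt{z^2-1}|\ge 1$. This makes $U^\mu$ constant on each Bernstein ellipse: it equals $\log 2-\log\rho$ on $\partial E_\rho$ and $\log 2$ on $[-1,1]$. If $p_*\in[-1,1]$, the exponent is $-\log\rho$, which gives the rate $\rho^{-1}$. If $p_*\notin[-1,1]$, the exponent is $\log|p_*+\sqrt{p_*^2-1}| - \log \rho$, which gives the rate $\delta$. No largeness condition on $\rho$ is needed here, since $U^\mu(p_*)>\max_\Gamma U^\mu$ holds whenever $p_*\in E_\rho$. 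The Chebyshev points have the arcsine distribution as their weak-star limit, so \Cref{thm: allmeasure} applies.

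For the uniform measure $d\mu=dx/2$, compute the potential directly:
\[
U^\mu(x) = 1 - \tfrac12\bigl[(1+x)\log(1+x) + (1-x)\log(1-x)\bigr]
\]
for real $x\in[-1,1]$. For real $x>1$ the same computation gives $1-\tfrac12[(x+1)\log(x+1)-(x-1)\log(x-1)]$. Hence $e^{-U^\mu(p_*)}$ equals $e^{-1}(1+p_*)^{(1+p_*)/2}(1-p_*)^{(1-p_*)/2}$ inside $[-1,1]$, and the analogous quotient for $p_*>1$. These are exactly the $p_*$-dependent factors in the statement. The inequality $\le 4/(e\rho)$ then follows from $(1\pm p_*)\le 2$, so the product is at most $2$.

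The remaining and hardest step is to bound $\max_{z\in\partial E_\rho} U^\mu(z)$ from above by something like $\log(2/\rho)$. The uniform potential is not constant on ellipses, so here I would use a comparison rather than an exact formula. For $z\in\partial E_\rho$ and $t\in[-1,1]$, $|z-t|\ge \mathrm{dist}(z,[-1,1])\ge \tfrac12(\rho-\rho^{-1})$, the semi-minor axis, so
\[
U^\mu(z)\le -\log\tfrac{\rho-\rho^{-1}}{2}.
\]
This yields the rate $\frac{2}{\rho-\rho^{-1}}e^{-U^\mu(p_*)}$, which is asymptotically $\frac{2}{\rho}e^{-U^\mu(p_*)}$ as $\rho$ grows. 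I expect this is how the "$\rho$ large enough" hypothesis enters: it absorbs the $\rho^{-1}$ correction and guarantees the condition $\max_\Gamma U^\mu<U^\mu(p_*)$ of \Cref{thm: allmeasure}. The sharper alternative is to use that $U^\mu+\log|z|\to 0$ at infinity, and the subharmonic maximum principle, so that $\max_{\partial E_\rho}U^\mu \le -\log\rho+\log 2+o(1)$. I would try that route first, and fall back to the crude bound, stating the result as an asymptotic in large $\rho$. Equidistributed points have the uniform measure as their limit, which completes the last two cases.
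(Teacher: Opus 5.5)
Your proposal follows the paper's proof. Both take the bound $\exp\bigl(\max_{\Gamma}U^\mu-U^\mu(p_*)\bigr)$ from \Cref{thm: allmeasure}, use that the arcsine potential $\log 2-\log|z+\sqrt{z^2-1}|$ is constant on Bernstein ellipses, compute the uniform potential explicitly on the real axis, and replace $\max_{\partial E_\rho}U^\mu$ by $\log(2/\rho)$ for large $\rho$; the paper literally writes ``$\approx$'' at this step. Your formulas for $U^\mu(p_*)$ and your derivation of $\le 4/(e\rho)$ are correct.

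\textbf{The error in your crude fallback.} On $\partial E_\rho$, the semi-minor axis $b=\frac{\rho-\rho^{-1}}{2}$ is \emph{not} a lower bound for $\mathrm{dist}(z,[-1,1])$. At the vertices $z=\pm a$, with $a=\frac{\rho+\rho^{-1}}{2}$, the distance is $a-1=\frac{(\rho^{1/2}-\rho^{-1/2})^2}{2}<b$. This is in fact the minimum over $\partial E_\rho$: using $a^2-b^2=1$, one gets $|z-1|=a-\cos\theta$ for $z=a\cos\theta+ib\sin\theta$ with $a\cos\theta\ge 1$. You have conflated $|z-t|$ with $|z|$; the latter does have minimum $b$ on $\partial E_\rho$.

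With the correct distance, the crude route gives the rate $\frac{2}{\rho+\rho^{-1}-2}\,e^{-U^\mu(p_*)}$, which still behaves like $\frac{2}{\rho}e^{-U^\mu(p_*)}$ for large $\rho$. Your sharper route is fine and needs no maximum principle: $U^\mu(z)=-\log|z|+O(|z|^{-2})$ together with $|z|\ge b$ on $\partial E_\rho$ gives $\max_{\partial E_\rho}U^\mu\le\log(2/\rho)+O(\rho^{-2})$.

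\textbf{The exact constant is out of reach of this argument.} Let $\omega$ be the arcsine measure and $h=U^\mu-U^{\omega}$. Then $h$ is harmonic on $\overline{\C}\setminus[-1,1]$, vanishes at $\infty$, and is not identically zero, since the second moments of $\mu$ and $\omega$ differ. By the mean-value property with respect to harmonic measure at $\infty$, $h$ is strictly positive somewhere on $\partial E_\rho$. Hence $\max_{\partial E_\rho}U^\mu>\log(2/\rho)$ for every $\rho>1$. So the Hermite-formula bound is strictly larger than $\frac{2}{e\rho}(\cdots)$, and the last two items can only be established up to a factor $1+O(\rho^{-2})$. That is also all the paper's ``$\approx$'' delivers. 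Stating these two items asymptotically in large $\rho$, as you propose, is the honest formulation.
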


\begin{proof}
    The first two points are already proven in \Cref{thm: convergencecheb} in the non-asymptotic case. An alternative proof can be derived as the logarithmic potential for the arcsine measure is \cite[Def. 3.4]{Saf2010}
    \[ U^{\mu}(z) = \log(2) - \log|z + \sqrt{z^2 - 1}|\]
    for $z \notin [-1,1]$ while it is $\log(2)$ if $z \in [-1,1]$. It follows that
    \[ \exp(-r(U^{\mu}(p_*) - \max_{z \in \Gamma} U^{\mu}(z)) = \exp(-r(U^{\mu}(p_*) - \log(2) + \log(\rho)))\]
    where we have used that $U^{\mu}(z) = \log(2) - \log(\rho)$ for $z \in \Gamma$ as $\partial E_{\rho}$ is a level curve for $U^{\mu}(z)$. This is equal to $\rho^{-r}$ if $p_* \in [-1,1]$, while, if $p_* \notin [-1,1]$, it is equal to
    \[\exp(-r( - \log(|p_*| + \sqrt{p_*^2 - 1}) + \log(\rho)) \]
    If $\mu$ is the uniform measure, we assume that $\rho$ is large enough so that we can use 
    \[ \max_{z \in \Gamma} U^{\mu}(z) \approx -\log(\rho/2).\]
    On the other hand, $U^{\mu}(z)$ is not constant over $[-1,1]$ and it is equal to
     \[ U^{\mu}(z) = -\frac{1}{2}\int_{-1}^1 \log(|z - x|)dx = 1 - \frac{1}{2} \Big[ (z + 1)\log(z + 1) + (1-z)\log(1-z) \Big].\]
    The minimum potential is achieved at $1$
    \[ \min_{z \in [-1,1]} U^{\mu}(z) = U^{\mu}(1) =-\frac{1}{2}\int_{-1}^{1} \log|1-x|dx = 1-\log(2).\]
    From this, we get the formula above and the upper bound
    \[  \exp(-r(U^{\mu}(p_*) - \max_{z \in \Gamma} U^{\mu}(z)) \le \exp(-r(1-\log(2) +\log(\rho/2))) = e^{-r}2^r\rho^{-r}2^r = \left(\frac{e\rho}{4}\right)^{-r}.\]
    If $p_* \notin [-1,1]$, we can explicitly compute $U^{\mu}(p_*)$ (assuming $p_* >1$, $p_* < -1$ is the same)
    \[ U^{\mu}(p_*) = -\frac{1}{2}\int_{-1}^1 \log(p_* - x)dx = 1 - \frac{1}{2} \Big[ (p_* + 1)\log(p_* + 1) - (p_* - 1)\log(p_* - 1) \Big]\]
    from which
     \begin{align*}
    &\exp(-r(U^{\mu}(p_*) - \max_{z \in \Gamma} U^{\mu}(z)) = \\
      &=\exp\left(-r(1 - \frac{1}{2} \Big[ (p_* + 1)\log(p_* + 1) - (p_* - 1)\log(p_* - 1) \Big] +\log(\rho/2))\right)
      \end{align*}
     which concludes the proof.
    
\end{proof}
% =========================================================
% PLOT 1 & 2: p_r = 1.008765 (Outside [-1,1])
% =========================================================
\begin{figure}[ht]
    \centering
    % Subfigure 1: Non-asymptotic
    \begin{subfigure}[t]{0.45\textwidth}
    \vspace{0pt}
        \centering
        \begin{tikzpicture}
            \begin{semilogyaxis}[
                width=\linewidth,
                title={Non-asymp. bound ($p_*= 1.008765$)},
                xlabel={Number of snapshot points ($r$)},
                ylabel={$\bigl\| (I-XX^{\dagger})\vec{x}(p_*) \bigr\|_2$},
                ymin=1e-16, ymax=10,
                grid=major,
                legend style={
                    at={(0.5,-0.35)},
                    anchor=north,
                    legend columns=2,
                    font=\small,
                    cells={anchor=west},
                    inner xsep=1.5pt,
                    inner ysep=1.5pt,
                    nodes={scale=0.75, transform shape}
                },
                mark options={solid}
            ]
            \addplot[color=blue, mark=square*, thick] 
                table[x expr=\coordindex+4, y=err_cheb] {theory/1000_3.1_1.008765.tex};
            \addlegendentry{Error $X_{\text{cheb}}$}
            
            \addplot[color=red, mark=*, thick] 
                table[x expr=\coordindex+4, y=err_equi] {theory/1000_3.1_1.008765.tex};
            \addlegendentry{Error $X_{\text{equi}}$}
            
            \addplot[color=black, dashed, thick, mark=none] 
                table[x expr=\coordindex+4, y=rate_cheb] {theory/1000_3.1_1.008765.tex};
            \addlegendentry{Fit Cheb $O(e^{-c_1 M})$}
            
            \addplot[color=magenta, dashed, very thick, mark=none] 
                table[x expr=\coordindex+4, y=rate_equi_sqr] {theory/1000_3.1_1.008765.tex};
            \addlegendentry{Fit Equi $O(e^{-c_2 \sqrt{M}})$}
            
            \addplot[color=black, dashdotted, thick, mark=none] 
                table[x expr=\coordindex+4, y=rate_equi_asym] {theory/1000_3.1_1.008765.tex};
            \addlegendentry{Fit Equi $O(e^{-c_3 M})$}
            
            \addplot[color=cyan, dotted, ultra thick, mark=none] 
                table[x expr=\coordindex+4, y=th_cheb_scaled_discrete] {theory/1000_3.1_1.008765.tex};
            \addlegendentry{Bound Cheb}
            
            \addplot[color=yellow!80!black, dotted, ultra thick, mark=none] 
                table[x expr=\coordindex+4, y=th_equi_scaled_discrete] {theory/1000_3.1_1.008765.tex};
            \addlegendentry{Bound Equi}
            \end{semilogyaxis}
        \end{tikzpicture}
    \end{subfigure}
    \hfill
    % Subfigure 2: Asymptotic
    \begin{subfigure}[t]{0.45\textwidth}
    \vspace{0pt}
        \centering
        \begin{tikzpicture}
            \begin{semilogyaxis}[
                width=\linewidth,
                title={Asymp. bound ($p_*= 1.008765$)},
                xlabel={Number of snapshot points ($r$)},
                ymin=1e-16, ymax=10,
                grid=major,
                legend style={
                    at={(0.5,-0.35)},
                    anchor=north,
                    legend columns=2,
                    font=\small,
                    cells={anchor=west},
                    inner xsep=1.5pt,
                    inner ysep=1.5pt,
                    nodes={scale=0.75, transform shape}
                }
            ]
            \addplot[color=blue, mark=square*, thick] 
                table[x expr=\coordindex+4, y=err_cheb] {theory/1000_3.1_1.008765.tex};
            \addlegendentry{Error $X_{\text{cheb}}$}
            
            \addplot[color=red, mark=*, thick] 
                table[x expr=\coordindex+4, y=err_equi] {theory/1000_3.1_1.008765.tex};
            \addlegendentry{Error $X_{\text{equi}}$}
            
            \addplot[color=cyan, dashed, very thick, mark=none] 
                table[x expr=\coordindex+4, y=th_cheb_scaled_apriori] {theory/1000_3.1_1.008765.tex};
            \addlegendentry{Asymp. Cheb}
            
            \addplot[color=yellow!80!black, dashed, very thick, mark=none] 
                table[x expr=\coordindex+4, y=th_equi_scaled_apriori] {theory/1000_3.1_1.008765.tex};
            \addlegendentry{Asymp. Equi}
            \end{semilogyaxis}
        \end{tikzpicture}
    \end{subfigure}
    \caption{Non-asymptotic (left) and asymptotic (right) error bounds for $p_* = 1.008765$ (outside $[-1,1]$). The setting is the same as for \Cref{fig: singdecay}. In blue and red we plot the real error (the norm of the projection of $\vec{x}(p_*)$ onto $\mbox{span}(X)^{\perp}$). On the left, we plot interpolations of the form $e^{-cr}$ and $e^{-c\sqrt{r}}$ and the bounds using the discrete measure $\mu_r$. On the right, we plot the bounds from \Cref{thm: asymptoticrate} using the limit measure $\mu$. We estimate $\max_{z \in \Gamma} U(z)$ with $-\log(\rho/2)$, where $\rho = 3.1$ is estimated from the decay rate in \Cref{fig: singdecay}. The constants in front of the bounds are chosen to best fit the real error.}
    \label{fig:error_bounds_outside}
\end{figure}

% =========================================================
% PLOT 3 & 4: p_r = 0.8765 (Inside [-1,1])
% =========================================================
\begin{figure}[ht]
    \centering
    % Subfigure 1: Non-asymptotic
    \begin{subfigure}[t]{0.45\textwidth}
    \vspace{0pt}
        \centering
        \begin{tikzpicture}
            \begin{semilogyaxis}[
                width=\linewidth,
                title={Non-asymp. bound ($p_*= 0.8765$)},
                xlabel={Number of snapshot points ($r$)},
                ylabel={$\bigl\| (I-XX^{\dagger})\vec{x}(p_*) \bigr\|_2$},
                ymin=1e-16, ymax=10,
                grid=major,
                legend style={
                    at={(0.5,-0.35)},
                    anchor=north,
                    legend columns=2,
                    font=\small,
                    cells={anchor=west},
                    inner xsep=1.5pt,
                    inner ysep=1.5pt,
                    nodes={scale=0.75, transform shape}
                },
                mark options={solid}
            ]
            \addplot[color=blue, mark=square*, thick] 
                table[x expr=\coordindex+4, y=err_cheb] {theory/1000_3.1_0.8765.tex};
            \addlegendentry{Error $X_{\text{cheb}}$}
            
            \addplot[color=red, mark=*, thick] 
                table[x expr=\coordindex+4, y=err_equi] {theory/1000_3.1_0.8765.tex};
            \addlegendentry{Error $X_{\text{equi}}$}
            
            \addplot[color=black, dashed, thick, mark=none] 
                table[x expr=\coordindex+4, y=rate_cheb] {theory/1000_3.1_0.8765.tex};
            \addlegendentry{Fit Cheb $O(e^{-c_1 M})$}
            
            \addplot[color=magenta, dashed, very thick, mark=none] 
                table[x expr=\coordindex+4, y=rate_equi_sqr] {theory/1000_3.1_0.8765.tex};
            \addlegendentry{Fit Equi $O(e^{-c_2 \sqrt{M}})$}
            
            \addplot[color=black, dashdotted, thick, mark=none] 
                table[x expr=\coordindex+4, y=rate_equi_asym] {theory/1000_3.1_0.8765.tex};
            \addlegendentry{Fit Equi $O(e^{-c_3 M})$}
            
            \addplot[color=cyan, dotted, ultra thick, mark=none] 
                table[x expr=\coordindex+4, y=th_cheb_scaled_discrete] {theory/1000_3.1_0.8765.tex};
            \addlegendentry{Bound Cheb}
            
            \addplot[color=yellow!80!black, dotted, ultra thick, mark=none] 
                table[x expr=\coordindex+4, y=th_equi_scaled_discrete] {theory/1000_3.1_0.8765.tex};
            \addlegendentry{Bound Equi}
            \end{semilogyaxis}
        \end{tikzpicture}
    \end{subfigure}
    \hfill
    % Subfigure 2: Asymptotic
    \begin{subfigure}[t]{0.45\textwidth}
    \vspace{0pt}
        \centering
        \begin{tikzpicture}
            \begin{semilogyaxis}[
                width=\linewidth,
                title={Asymp. bound ($p_*= 0.8765$)},
                xlabel={Number of snapshot points ($r$)},
                ymin=1e-16, ymax=10,
                grid=major,
                legend style={
                    at={(0.5,-0.35)},
                    anchor=north,
                    legend columns=2,
                    font=\small,
                    cells={anchor=west},
                    inner xsep=1.5pt,
                    inner ysep=1.5pt,
                    nodes={scale=0.75, transform shape}
                }
            ]
            \addplot[color=blue, mark=square*, thick] 
                table[x expr=\coordindex+4, y=err_cheb] {theory/1000_3.1_0.8765.tex};
            \addlegendentry{Error $X_{\text{cheb}}$}
            
            \addplot[color=red, mark=*, thick] 
                table[x expr=\coordindex+4, y=err_equi] {theory/1000_3.1_0.8765.tex};
            \addlegendentry{Error $X_{\text{equi}}$}
            
            \addplot[color=cyan, dashed, very thick, mark=none] 
                table[x expr=\coordindex+4, y=th_cheb_scaled_apriori] {theory/1000_3.1_0.8765.tex};
            \addlegendentry{Asymp. Cheb}
            
            \addplot[color=yellow!80!black, dashed, very thick, mark=none] 
                table[x expr=\coordindex+4, y=th_equi_scaled_apriori] {theory/1000_3.1_0.8765.tex};
            \addlegendentry{Asymp. Equi}
            \end{semilogyaxis}
        \end{tikzpicture}
    \end{subfigure}
    \caption{Non-asymptotic (left) and asymptotic (right) error bounds for $p_* = 0.8765$ (inside $[-1,1]$). Similar to \Cref{fig:error_bounds_outside}, we plot the real error against the bounds calculated with the discrete measure $\mu_r$ (left) and the limit measure $\mu$ (right). Parameter estimation follows the same logic, with $\rho = 3.1$.}
    \label{fig:error_bounds_inside}
\end{figure}

An extension to the case where $p$ lives in a high-dimensional space may be analyzed by using the analogous tools for high-dimensional potential theory and high-dimensional polynomial approximation \cite{Lev2006}.

 In practice, finding the projection of $\vec{x}(p_*)$ on $\operatorname{span}(X)$ is difficult  unless we have access to $\vec{x}(p_*)$ itself. 
Classically, if a good subspace is at disposal, $\vec{x}(p_*)$ is approximated using the Rayleigh-Ritz method \cite{Saa2011}, i.e. we solve

\[ M(p) = \operatorname{argmin}_{M \in \C^{r \times r}} \|A(p)X-XM \|_F\]
we compute the corresponding eigenpair $(\Tilde{\lambda}(p), \vec{y}(p))$ of $M(p)$
\[ M(p)\vec{y}(p) = \Tilde{\lambda}(p)\vec{y}(p)\]
and we return $(\Tilde{\lambda}(p), X\vec{y}(p))$. Note that, even if Rayleigh-Ritz may work well in many cases, it can break down in specific examples \cite{Ste2001}.

\subsection{Theoretical results on subsampling for Rayleigh-Ritz}

In this section, we present some results on the accuracy of the solution of a least square problem solved using subsample. Define
\[ M(p) = \operatorname{argmin}_{M \in \C^{r \times r}} \| A(p)X-XM\|_F\]
and 
\[ \widehat{M}(p) = \operatorname{argmin}_{M \in \C^{r \times r}} \|S( A(p)X-XM)\|_F\]
We exploit the following theorem. 

\begin{lemma}[{\cite[Lemma 5.1]{Jon2025}}]\label{thm: boundsub}
Let $B \in \mathbb{R}^{n \times r}$, $\vec{b} \in \mathbb{R}^n$ with $n > r$, and let 
$\vec{c}, \hat{\vec{c}}$ denote the solutions for the least-squares problems
\[
\min_{\vec{c}} \|B\vec{c} - \vec{b}\|_2 
\qquad\text{and}\qquad
\min_{\hat{\vec{c}}} \|S(B\hat{\vec{c}} - \vec{b})\|_2,
\]
where $S \in \mathbb{R}^{s \times n}$ with $r \le s \le n$.
Then we have
\begin{equation}
1\leq\frac{\|B\hat{\vec{c}} - \vec{b}\|_2}{\|B\vec{c} - \vec{b}\|_2}
\le 
\frac{\|S\|_2}{\sigma_{\min}(SU)},
\end{equation}
where $B = UH$ is the polar decomposition. We also have
\begin{equation}
1\leq\frac{\|B\hat{\vec{c}} - \vec{b}\|_2}{\|B\vec{c} - \vec{b}\|_2}
\le 
\kappa_2(\widetilde{S}U),
\end{equation}
where $\widetilde{U}\widetilde{H} = [B\ \vec{b}] \in \mathbb{R}^{n \times (r+1)}$ is the polar decomposition, and 
$\kappa_2(\widetilde{S}U) = \frac{\sigma_{\max}(\widetilde{S}U)}{\sigma_{\min}(\widetilde{S}U)}.$
\end{lemma}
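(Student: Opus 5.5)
The plan is to express the sketched residual $B\hat{\vec{c}}-\vec{b}$ in terms of the true residual $\vec{r}:=B\vec{c}-\vec{b}$. For the first bound we then bound the resulting operator, and for the second we use a subspace argument. The lower bound $1$ in both chains is immediate, since $\vec{c}$ minimizes $\|B\vec{c}-\vec{b}\|_2$ over all vectors, including $\hat{\vec{c}}$. Throughout we assume $SU$ has full column rank; otherwise $\sigma_{\min}(SU)=0$ and the bounds are vacuous. We also assume $B$ has full column rank, so that $U\in\R^{n\times r}$ has orthonormal columns and $H$ is invertible. The rank-deficient case can be handled by restricting to $\operatorname{range}(B)$.

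\textbf{First bound.} Write $B=UH$. The sketched problem is equivalent to $\min_{\vec{w}}\|S(U\vec{w}-\vec{b})\|_2$ with $\vec{w}=H\hat{\vec{c}}$, whose minimizer is $\hat{\vec{w}}=(SU)^{\dagger}S\vec{b}$. Hence $B\hat{\vec{c}}=U(SU)^{\dagger}S\vec{b}$. Substitute $\vec{b}=B\vec{c}-\vec{r}=UH\vec{c}-\vec{r}$ and use $(SU)^{\dagger}SU=I_r$, which holds because $SU$ has full column rank. This gives
\[
B\hat{\vec{c}}-\vec{b} = UH\vec{c}-U(SU)^{\dagger}S\vec{r}-UH\vec{c}+\vec{r} = \bigl(I-P\bigr)\vec{r}, \qquad P:=U(SU)^{\dagger}S .
\]
The key observation is that $P$ is idempotent, since $P^2=U(SU)^{\dagger}(SU)(SU)^{\dagger}S=P$. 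It is a nontrivial oblique projector because $0<r<n$. For any projector $P\neq 0,I$ one has $\|I-P\|_2=\|P\|_2$ (the Del Pasqua/Szyld identity). Therefore
\[
\|B\hat{\vec{c}}-\vec{b}\|_2\le \|P\|_2\|\vec{r}\|_2\le \|U\|_2\,\|(SU)^{\dagger}\|_2\,\|S\|_2\,\|\vec{r}\|_2=\frac{\|S\|_2}{\sigma_{\min}(SU)}\|\vec{r}\|_2 .
\]
I expect this step to be the main obstacle. A naive Pythagorean argument only yields $\sqrt{1+\|S\|_2^2/\sigma_{\min}(SU)^2}$, and the identity $\|I-P\|_2=\|P\|_2$ is exactly what removes the extra $1$. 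I would cite this identity rather than reprove it.

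\textbf{Second bound.} I read $\kappa_2(\widetilde{S}U)$ as $\kappa_2(S\widetilde{U})$, where $\widetilde{U}$ is the orthonormal polar factor of $[B\ \vec{b}]$. Both $\vec{r}$ and $B\hat{\vec{c}}-\vec{b}$ lie in $\operatorname{range}(\widetilde{U})$, so each equals $\widetilde{U}\vec{z}$ for some $\vec{z}$ with $\|\widetilde{U}\vec{z}\|_2=\|\vec{z}\|_2$. Sketched optimality of $\hat{\vec{c}}$ gives the chain
\[
\sigma_{\min}(S\widetilde{U})\|B\hat{\vec{c}}-\vec{b}\|_2\le\|S(B\hat{\vec{c}}-\vec{b})\|_2\le\|S(B\vec{c}-\vec{b})\|_2\le\sigma_{\max}(S\widetilde{U})\|\vec{r}\|_2 .
\]
Dividing through yields the $\kappa_2$ bound. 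This part is routine once the subspace membership is noted.
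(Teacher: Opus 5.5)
Your proof is correct: writing $B\hat{\vec{c}}-\vec{b}=(I-P)(B\vec{c}-\vec{b})$ with the oblique projector $P=U(SU)^{\dagger}S$ and using $\|I-P\|_2=\|P\|_2\le\|S\|_2/\sigma_{\min}(SU)$ gives the first bound, and the subspace-embedding chain through $\operatorname{range}(\widetilde{U})$ gives the second. Your reading of $\kappa_2(\widetilde{S}U)$ as $\kappa_2(S\widetilde{U})$ also matches the paper's own matrix version of the lemma. The paper does not prove this statement itself but imports it from \cite[Lemma 5.1]{Jon2025}; your argument is the standard route to these two bounds, and it is the proof the paper reuses, with $\vec{b}$ replaced by a matrix, for its Frobenius-norm analogue.
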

We can follow the proof of \cite[Lemma 5.1]{Jon2025} to prove the following theorem.
\begin{lemma}\label{thm: boundsubsrr}
\begin{equation}\label{eqn:firstbound}
\frac{\|X\widehat{M}(p) - A(p)X\|_F}{\|XM(p) - A(p)X\|_F}
\le 
\frac{\|S\|_2}{\sigma_{\min}(SU)},
\end{equation}
where $X = UH$ is the polar decomposition. We also have
\begin{equation}
\frac{\|X\widehat{M}(p) - A(p)X\|_F}{\|XM(p) - A(p)X\|_F}
\le 
\kappa_2(S\widetilde{U}),
\end{equation}
where $\widetilde{U}\widetilde{H} = [X \ A(p)X] \in \C^{n \times 2r}$ is the polar decomposition.
\end{lemma}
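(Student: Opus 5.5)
The plan is to reduce the matrix least-squares problem to $r$ independent vector least-squares problems, column by column, and then apply the argument behind \Cref{thm: boundsub}. Write $M=[\vec{m}_1,\dots,\vec{m}_r]$ and let $\vec{a}_j=A(p)X\vec{e}_j$. Since the Frobenius norm squared is the sum of the squared column norms, we have
\[
\|A(p)X-XM\|_F^2=\sum_{j=1}^r\|X\vec{m}_j-\vec{a}_j\|_2^2,
\]
and the same holds with $S$ in front. Hence the $j$-th column $\vec{m}_j$ of $M(p)$ solves $\min\|X\vec{m}-\vec{a}_j\|_2$, and the $j$-th column $\widehat{\vec{m}}_j$ of $\widehat{M}(p)$ solves $\min\|S(X\vec{m}-\vec{a}_j)\|_2$. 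The statement of \Cref{thm: boundsub} is for real data, so I will repeat its proof over $\C$ rather than cite it. The proof uses only orthogonal projections and singular values, so nothing changes in the complex case.

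For the first bound, let $P=UU^*$ be the orthogonal projector onto $\operatorname{span}(X)$, with $U$ the orthonormal polar factor. Write $\vec{r}_j=(I-P)\vec{a}_j$; this is the optimal residual, $X\vec{m}_j-\vec{a}_j=-\vec{r}_j$. Any sketched residual has the form $X\widehat{\vec{m}}_j-\vec{a}_j=U\vec{w}_j-\vec{r}_j$ for some $\vec{w}_j$. By optimality of the sketched problem,
\[
\|S(U\vec{w}_j-\vec{r}_j)\|_2\le\|S\vec{r}_j\|_2\le\|S\|_2\|\vec{r}_j\|_2 .
\]
The triangle inequality then gives $\sigma_{\min}(SU)\|\vec{w}_j\|_2\le\|SU\vec{w}_j\|_2\le 2\|S\|_2\|\vec{r}_j\|_2$. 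The factor 2 from this step would spoil the constant. To remove it I will follow \cite{Jon2025} more carefully. Instead of the triangle inequality, use the general-vector version: for any $\vec{v}$ in $\operatorname{span}([U\ \vec{r}_j])$, $\|S\vec{v}\|_2\ge\sigma_{\min}(S[U\ \hat{\vec{r}}_j])\|\vec{v}\|_2$, where $\hat{\vec{r}}_j=\vec{r}_j/\|\vec{r}_j\|_2$, and $[U\ \hat{\vec{r}}_j]$ has orthonormal columns. This gives
\[
\|U\vec{w}_j-\vec{r}_j\|_2\le\|S(U\vec{w}_j-\vec{r}_j)\|_2/\sigma_{\min}(S[U\ \hat{\vec{r}}_j])\le\|S\|_2\|\vec{r}_j\|_2/\sigma_{\min}(\cdot).
\]
This is cleanly of the claimed form, but with $U$ augmented by $\hat{\vec r}_j$. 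For \eqref{eqn:firstbound} exactly as written, with only $SU$, I will instead use the identity $\|U\vec{w}_j-\vec{r}_j\|_2^2=\|\vec{w}_j\|_2^2+\|\vec{r}_j\|_2^2$, which holds because $\vec r_j\perp\operatorname{span}(U)$. I will then try to bound $\|\vec{w}_j\|_2$ through the normal equations of the sketched problem, $(SU)^*S(U\vec{w}_j-\vec{r}_j)=0$. These give $\vec{w}_j=(SU)^\dagger S\vec{r}_j$, so
\[
\|\vec{w}_j\|_2\le\|S\|_2\|\vec{r}_j\|_2/\sigma_{\min}(SU).
\]
Combining this with the Pythagorean identity needs $1+\|S\|_2^2/\sigma_{\min}^2\le \|S\|_2^2/\sigma_{\min}^2$, which is false. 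This combination of constants is the step I expect to be the main obstacle. There are two ways out. One is that the intended bound absorbs the ratio because $\sigma_{\min}(SU)\le\|S\|_2$. The other is to check how \cite{Jon2025} handles it and reproduce that exactly. Whatever per-column bound $\|X\widehat{\vec{m}}_j-\vec{a}_j\|_2\le c\,\|X\vec{m}_j-\vec{a}_j\|_2$ results, it has a constant $c$ independent of $j$, because $U$ does not depend on $j$. Squaring, summing over $j$ and taking square roots then gives the Frobenius bound with the same constant.

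For the second bound, use the polar factor $\widetilde{U}$ of $[X\ A(p)X]$. Every residual $X\vec{m}-\vec{a}_j$ lies in $\operatorname{span}(\widetilde{U})$ for every $j$ at once; this is why the whole block $A(p)X$ is appended rather than a single right-hand side. Therefore $\sigma_{\min}(S\widetilde{U})\|\vec{v}\|_2\le\|S\vec{v}\|_2\le\sigma_{\max}(S\widetilde{U})\|\vec{v}\|_2$ for every such residual $\vec v$. Chaining these with sketched optimality,
\[
\|X\widehat{\vec{m}}_j-\vec{a}_j\|_2\le\frac{\|S(X\widehat{\vec{m}}_j-\vec{a}_j)\|_2}{\sigma_{\min}(S\widetilde U)}\le\frac{\|S(X\vec{m}_j-\vec{a}_j)\|_2}{\sigma_{\min}(S\widetilde U)}\le\kappa_2(S\widetilde{U})\|X\vec{m}_j-\vec{a}_j\|_2 .
\]
Summing the squares over $j$ finishes this part. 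It assumes $S\widetilde{U}$ has full column rank, which requires $s\ge 2r$.
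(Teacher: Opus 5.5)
Your column-by-column reduction is the same as the paper's ``treat $\vec b$ as a matrix'' in \cite[Lemma 5.1]{Jon2025}, and your proof of the $\kappa_2(S\widetilde U)$ bound is correct. The gap is in the first inequality: the proposal never reaches the constant $\|S\|_2/\sigma_{\min}(SU)$. Your Pythagorean identity together with $\vec w_j=(SU)^\dagger S\vec r_j$ gives exactly
\[
\frac{\|X\widehat{\vec m}_j-\vec a_j\|_2}{\|X\vec m_j-\vec a_j\|_2}=\sqrt{1+\|(SU)^\dagger S\hat{\vec r}_j\|_2^2},\qquad \hat{\vec r}_j=\vec r_j/\|\vec r_j\|_2 .
\]
If you first bound $\|(SU)^\dagger S\hat{\vec r}_j\|_2\le x:=\|S\|_2/\sigma_{\min}(SU)$ and then add $1$, you only get $\sqrt{1+x^2}$. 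Neither of your escapes repairs this. From $x\ge 1$ you get at best $\sqrt{1+x^2}\le\sqrt2\,x$, never $\le x$. The variant with $\sigma_{\min}(S[U\ \hat{\vec r}_j])$ is a genuinely weaker bound: by interlacing $\sigma_{\min}(S[U\ \hat{\vec r}_j])\le\sigma_{\min}(SU)$, and it is $0$ when $s=r$, which the lemma allows. Deferring to ``check how \cite{Jon2025} handles it'' leaves the key step unproved.

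The missing idea is to bound $\sqrt{1+\|\vec w\|_2^2}$ as a whole, not $\|\vec w\|_2$ alone. Since $(SU)^\dagger SU=I_r$ and $[U\ \hat{\vec r}_j]$ has orthonormal columns,
\[
\sqrt{1+\|(SU)^\dagger S\hat{\vec r}_j\|_2^2}=\bigl\|[\,I_r\ \ (SU)^\dagger S\hat{\vec r}_j\,]\bigr\|_2=\bigl\|(SU)^\dagger S\,[U\ \hat{\vec r}_j]\bigr\|_2\le\|(SU)^\dagger\|_2\,\|S\|_2=\frac{\|S\|_2}{\sigma_{\min}(SU)} .
\]
Equivalently, at the matrix level, write $X\widehat M(p)-A(p)X=-(I-\Pi)(I-UU^*)A(p)X$ with the oblique projector $\Pi=U(SU)^\dagger S$ onto $\operatorname{span}(X)$. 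The identity $\|I-\Pi\|_2=\|\Pi\|_2=\|(SU)^\dagger S\|_2$, valid for idempotents $\Pi\neq 0,I$, then gives $\|X\widehat M(p)-A(p)X\|_F\le\|(SU)^\dagger S\|_2\,\|(I-UU^*)A(p)X\|_F$, which is the first bound. With this step inserted, your argument is a complete proof along the same lines as the paper's.
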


\begin{proof}
The proof is the same as \cite[Lemma 5.1]{Jon2025}, where we treat $\vec{b}$ as a matrix rather than a vector.
\end{proof}
Using \Cref{thm: boundsubsrr} and the result in \cite[\S 6.6]{Nak2024}, we get that
\begin{equation}\label{eq: bound}
\begin{aligned}
 \| M(p) - \widehat{M}(p)\|_F &\leq \frac{1}{\sigma_{\min}(X)}\| X(M - \widehat{M})\|_F \\
 &= \frac{1}{\sigma_{\min}(X)}\|(A(p)X-X\widehat{M}(p)) - (A(p)X- XM(p))\|_F \\
& \leq \frac{1}{\sigma_{\min}(X)}(\|(A(p)X-X\widehat{M}(p))\|_F +\|(A(p)X- XM(p))\|_F) \\
& \leq\frac{1}{\sigma_{\min}(X)}\left(\frac{\|S\|_2}{\sigma_{\min}(SU)}  + 1\right)\|A(p)X-XM\|_F  \\
&= \kappa_2(X)\left(\frac{\|S\|_2}{\sigma_{\min}(SU)}  + 1\right)\|(I-QQ^*)A(p)Q\|_F
\end{aligned}
\end{equation}
where $Q$ are the left singular vectors of $X$. 
This guarantees that, if we choose $S$ well and $\kappa_2(X)$ is not too large (for example if we use POD and replace $X$ with its first left singular vectors) and $Q$ is almost invariant for $A(p)$, then
\[ M(p) \approx \widehat{M}(p).\]

If the matrices $A(p)$ are Hermitian, we may want to output real eigenvalues. As $\widehat{M}(p)$ is not Hermitian in general, we do not always have real eigenvalues. Some solutions are listed in \cite{Nak2024}, for example returning the real part of the computed eigenvalues and eigenvectors. 
Note that, if we orthogonalize $X$ and use an orthonormal matrix $Q$ in the least-square problem instead of $X$, then 
\[ M(p) = Q^*A(p)Q\]
is Hermitian and we can make the substitution
\[ \widehat{M}(p) \leftarrow \frac{\widehat{M}(p) + \widehat{M}(p)^*}{2}\]
without making the solution worse. In fact

\[ \| M(p) - \widehat{M}(p)\|_F \geq \left\| M(p) -  \frac{\widehat{M}(p) + \widehat{M}(p)^*}{2}\right\|_F\]
as 
\[ M(p) \perp \frac{\widehat{M}(p) - \widehat{M}(p)^*}{2}.\]

\begin{remark}[\textbf{Varying $p$}]
Note that our problem has a crucial difference from \cite[\S 5.2]{Jon2025}, that is, the bound in \eqref{eqn:firstbound} and in \eqref{eq: bound}, does \textit{not} depend on $A(p)$ but only on $X$. This suggests that we can select rows from $X$ and, if $X$ is not changed, we can keep the same set of rows for different values of the parameter. This is crucial as it guarantees that the only error in the algorithm comes from a poor performance of the snapshot matrix when $p$ varies, and not by subsampling, as long as the subsampling matrix $S$ was chosen well at the beginning.
\end{remark}
 \section{Main algorithm}\label{sec: algo}

 In this section, we outline our algorithm. We assume we have a parametrized sequence of matrices $A(p) \in \C^{n \times n}$ where $p \in \Omega$, where $\Omega$ is a domain in $\C$ and we assume we want to compute $m$ eigenpairs for each value of the parameter. 
 The algorithm is divided into two parts: an \textit{offline} phase and an \textit{online} phase.
 After the \textit{offline} phase is carried out, our goal in the \textit{online} phase is to quickly compute a set of eigenpairs
 \[ \{ (\lambda_1(p_*), \vec{x}_1(p_*)), \dots, (\lambda_m(p_*), \vec{x}_m(p_*)) \}\]
 for an unseen parameter value $p_*$ or for a large number of unseen parameters.

 The \textit{offline} phase is completely analogous to the setting of \textit{reduced basis} and it is devoted to the computation of the snapshot matrix. First, we select a subset $\{ p_i\}_{i=0}^{r-1}$ of parameters and we compute the eigenpairs 
  \[ \{ (\lambda_1(p_i), \vec{x}_1(p_i)), \dots, (\lambda_m(p_i), \vec{x}_m(p_i)) \} \quad i = 0,\dots, r-1\]
  using a standard algorithm. Then, we form the snapshot matrix 
  \[ X = [\vec{x}_1(p_0), \dots \vec{x}_m(p_0), \dots,\vec{x}_1(p_{r-1}), \dots , \vec{x}_m(p_{r-1})].\]

  Optionally, we can substitute $X$ with $Q$ where $Q$ are the first left singular vectors. As we know that $X$ has quickly decaying singular values by \Cref{thm:singdecay}, we can select only the left singular vectors associated with singular values greater than a prescribed quantity, as done in POD.

  After the \textit{offline} phase, we have to compute the matrix $S$ (or equivalently, the row indices). One option (which will always be our choice) is to directly select from $X$ (or $Q)$ using one of the classical row/column selection methods (see \Cref{subsub:substrat}, \cite{Hal2011}, \cite[\S 2.1.2]{Don2023}). Another more conservative option is to select from 
  \[ [X , A(p)X] \in \C^{n \times 2r}\]
  where $p$ is the first parameter value we see in the online phase.
  Note that, in this second scenario, we usually obtain more rows, as we are subsampling from a matrix with double the number of columns. This second strategy is more expensive but has some advantages, for example we can compute a cheap estimate of the residual; see \Cref{sub:residual}. Furthermore, in this second scenario, we need to access $A(p)$ and compute the matrix-vector products of $A(p)X$. According to \cite[\S 5.2]{Jon2025}, we can select the row indices for each snapshot point, and, for a new parameter $p$, we employ the row indices of the nearest snapshot point. This strategy does not require accessing $A(p)$ for new values of the parameter.

  In the \textit{online} phase we choose a new parameter $p_*$ and solve

  \[ \widehat{M}(p_*) = \operatorname{argmin}_{M \in \C^{r \times r}}\| SA(p_*)X - SXM\|_F.\]
  Note that $SX$ can be precomputed, while $SA(p_*)X$ can be computed as $(SA(p_*))X$, possibly without looking at the entire matrix $A(p_*)$. Once we have found $\widehat{M}(p_*) \in \C^{r \times r}$, we compute the corresponding eigenpairs
   \[ \{ (\widehat{\lambda}_1(p_*), \vec{y}_1(p_*)), \dots, (\widehat{\lambda}_m(p_*), \vec{y}_m(p_*)) \}\]
  of $\widehat{M}(p_*)$ using a dense algorithm. If $A(p)$ is Hermitian, we may take the real part of these eigenvalues. 
  
  At the end, we return

  \[  \{ (\widehat{\lambda}_1(p_*), X\vec{y}_1(p_*)), \dots, (\widehat{\lambda}_m(p_*), X\vec{y}_m(p_*)) \}\]

  The online phase can be carried out for a large number of new parameters using the same $S$ and $X$. On the other hand, it is advisable to update both $X$ and $S$ every fixed number of steps, or if the estimate residual norm is large (see \Cref{sec: imple} for a residual estimation), especially in the time-dependent setting, where the new parameter values are further and further away from the initial snapshot points; see \Cref{sub:residual} for the residual estimation and the update of $S$. If $X$ is to be updated, we refine the last computed eigenvectors using some iterations of a Krylov method (for example \texttt{eigs} in \texttt{MATLAB}) and we add these new vectors to $X$. The pseudocode is outlined in \Cref{alg}. We name our algorithm \texttt{SubApSnapEig}, following the naming convention introduced in \cite{Jon2025}.

 \begin{algorithm}
  \caption{\texttt{SubApSnapEig} for solving a sequence of eigenvalue problems}\label{alg}
  \begin{algorithmic}[1]
  \Require Snapshot points $p_0, \dots ,p_{r-1} \in \Omega$, new parameter value $p_* \in \Omega$ or a set of $\ell$ new parameters
      $\vec{p}_* \subset \Omega$, subsampling strategy, number $m$ of desired eigenpairs, tolerance \texttt{tol} and update frequency \texttt{freq}.
  \Ensure $m$ eigenpairs for parameter value $p_*$ or for all the parameter values in $\vec{p}_*$.
  
    \State \makebox[0pt][l]{\hspace{36em}%
      \smash{\raisebox{-4ex}{% Regola questo valore per centrare verticalmente la graffa
        $\left.\begin{array}{@{}c@{}}
        \\{}\\{}\\{}\\{}\\{}\\{}
        \end{array}\right\}
        \begin{tabular}{l}
          Offline phase
        \end{tabular}$%
      }}%
    }%
    Find $m$ eigenpairs of $A(p_i), \ i=0,\dots ,r-1$ and form  
   $X = [\vec{x}_1(p_0), \dots \vec{x}_m(p_0), \dots,\vec{x}_1(p_{r-1}), \dots , \vec{x}_m(p_{r-1})]$ 

    \State (Optional, recommended when $mr \gg 1$) Find  an (approximate)
    orthonormal matrix $Q$ that spans the range of $X$ (for example with (randomized) SVD) 
    \State Use subsampling strategy on $Q$ (or on $[Q, A(p)Q]$ for $p$ taking the value of the snapshot points) and 
    obtain the matrix $S$. 

    \For {$p_* \in \boldsymbol{p}_*$}
    
    \State \makebox[0pt][l]{\hspace{34.5em}%
      \smash{\raisebox{-8ex}{% Regola questo valore per centrare verticalmente la graffa
        $\left.\begin{array}{@{}c@{}}
        \\{}\\{}\\{}\\{}\\{}\\{}\\{}\\{}
        \end{array}\right\}
        \begin{tabular}{l}
          Online phase
        \end{tabular}$%
      }}%
    }%
    Solve the least square problem $\widehat{M}(p_*) = \operatorname{argmin}_{M \in \C^{r \times r}}\| SA(p_*)Q - SQM\|_F$ and compute $m$ corresponding eigenpairs of $\widehat{M}(p_*)$. 
    \State If \texttt{freq} parameter value has been processed since last update, update $X$ with the new refined eigenvectors and update $S$.
    \State (Optional) If leverage score sampling with oversampling is employed, estimate the residual. If residual norm is larger than \texttt{tol}, update $X$ and $S$.
    \State \Return $\{(\widehat{\lambda}_i(p_*), X\vec{y}_i(p_*))\}_{i=1}^m.$
    \EndFor

\end{algorithmic}
\end{algorithm}

Assuming that the number of rows selected, $s$, satisfies $s = \mathcal{O}(mr)$ (we take  $s = 4mr$ in our experiments) and that we subsample from $X$, the complexity of the algorithm breaks down as follows: in the \textit{offline} phase, the computation of $X$ costs $\mathcal{O}(n^3r)$ to which we may add the orthogonalization of $X$, whose complexity is $\mathcal{O}(n(mr)^2)$ and the subsampling $\mathcal{O}(nmr)$, assuming a row of $A(p_*)$ can be computed in $\mathcal{O}(n)$ operations. In the \textit{online} phase, it costs $\mathcal{O}(n(mr)^2)$ to compute $(SA(p_*))X$, while the least square problem requires $\mathcal{O}((mr)^3)$ operations and computing the eigenpairs $\mathcal{O}((mr)^3)$ operations.
In conclusion, the complexity for the \textit{offline} phase is 
\[ \mathcal{O}(n^3r) + \mathcal{O}(n(mr)^2) + \mathcal{O}(nmr) = \mathcal{O}(n^3r + n(mr)^2)\]
while the complexity for the \textit{online} phase is 
\[ \mathcal{O}(n(mr)^2\ell) + \mathcal{O}((mr)^3\ell) + \mathcal{O}((mr)^3\ell) = \mathcal{O}(n(mr)^2\ell+(mr)^3\ell)\]
where $\ell$ is the number of new parameters we want to process.

 \section{Implementation and residual estimation}\label{sec: imple}

 In this section, we discuss implementation details for our algorithm.

 \subsubsection*{Computation of $SA(p_*)X$}
 As already mentioned, we compute $SA(p_*)X$ as
 $(SA(p_*))X$, i.e we first subsample from $A(p_*)$ and then we compute the product. Note that this approach is much more convenient in numerous settings. For example, if we can access $A(p)$ entry-wise and computing single entries of $A(p)$ is expensive, computing $SA(p)$ is more efficient as we do not need to know all the entries of $A(p)$.

 \subsubsection*{Orthogonalisation and truncation of snapshot matrix $X$}

We can orthogonalize the matrix $X$ by taking a thin rank-revealing QR decomposition $X = QR$ or by taking the SVD, as in POD \cite{Ber1993, Cha2000} or the randomized SVD \cite{Gui2024}. This is particularly
effective when $mr \gg 1$ and the snapshot matrix $X$ is numerically rank deficient, as orthogonalising gives better numerical stability and better-conditioned least-squares problems.

\subsubsection*{Update of $X$ and $S$}

In order for the algorithm to perform well for a large set of parameter values, we may want to implement some update strategy for the snapshot matrix $X$ and consequently for $S$. We update $X$ in two cases: if we have not updated for a prescribed number of parameters (for time-dependent problems), or the estimate residual is large, see \Cref{sub:residual}. Note that the computed eigenvectors at any step belongs to the span of $X$, so in order to inject new information in the snapshot matrix, we have to refine the computed eigenvectors. We do this by running a few iterations of a Krylov method, in particular we run \texttt{MATLAB} \texttt{eigs} with start vector equal to the previously computed eigenvector. This is often a good estimate so we expect \texttt{eigs} to converge in a few iterations.

More importantly, to update the matrix $S$ we do not have to compute the  matrix $A(p)X$ (differently from the linear system case), as we choose the subsampling $S$ using only $X$ (or its left singular vectors $Q$).

\subsubsection*{Subsampling strategies}\label{subsub:substrat}
It is of crucial importance to select a subsample strategy that fits the problem we are tackling. Many determistic algorithms are available in the literature \cite{Don2023} for the column/row selection problem: LU with partial pivoting (LUPP), column-pivoted QR (CPQR), Osinsky's approach \cite{Osi2025}. Randomized methods are also available: leverage score sampling \cite{Dri2012}, determinantal point process \cite{Der2021} (expensive but strong theoretical guarantees), and adaptive randomized pivoting \cite{Cor2025}. Note that we usually subsample from $X$ only, another possible choice is to subsample from $[X, A(p)X]$. This requires accessing to $A(p)$ and to select a large number of rows, but guarantees stronger theoretical bounds. Note that if $A(p)$ is already accessed entirely to refine the eigenvector, then the multiplication $A(p)X$ only contributes to the computational time for the matrix-vector products, as $A(p)$ is already computed.

\subsection{Residual estimate and change of $S$}\label{sub:residual}

In this section, we provide a cheap residual estimate available if leverage score sampling with oversampling (i.e., $s>r$) is performed in the solution of the least square problem.
According to \cite[\S 9.6.3]{Mar2020}, we know that, if $S \in \R^{s\times n }$ is generated through leverage score sampling and every row is divided by $\sqrt{sp_i}$, where $p_i$ is the probability of selecting that row, then we have a subspace embedding property. In particular 

\begin{theorem}

If we subsample using leverage scores with oversampling from $Z \in \R^{n \times r}$
then 
\[ (1 +\varepsilon)\left\|\frac{1}{\sqrt
s}SZ\vec{x}\right\| \leq \|Z\vec{x}\| \leq (1 + \varepsilon)\left\|\frac{1}{\sqrt
s}SZ\vec{x}\right\| \quad \forall \vec{x} \in \C^r\]
if $S \in \R^{s \times n}$ 
\[ s \ge 2\varepsilon^{-2}r\log(2r).\]
\end{theorem}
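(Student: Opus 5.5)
The plan is to reduce the claim to a concentration statement for a sum of independent random positive semidefinite matrices, and then invoke a matrix Chernoff bound, following \cite[\S 9.6.3]{Mar2020}. I read the statement in its intended form: the left-hand factor should be $(1-\varepsilon)$, and the inequality holds with high (or at least constant) probability over the draw of $S$. I also interpret the scaling so that the rescaled sketch $\Phi := \frac{1}{\sqrt{s}} S$ has rows $\vec{e}_{i_j}^T/\sqrt{p_{i_j}}$, where $i_1,\dots,i_s$ are drawn i.i.d. with probabilities $p_i$. The rescaling by $\sqrt{p_i}$ and by $\sqrt{s}$ is then counted once. With this convention $\mathbb{E}[\Phi^T\Phi] = I$.

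\textbf{Step 1: reduction to an orthonormal basis.} Let $Z = UH$ with $U \in \R^{n\times r}$ having orthonormal columns; for rank-deficient $Z$, use a thin SVD and replace $r$ by the rank. Since $\{Z\vec{x}\} = \{U\vec{y}\}$ and $\|U\vec{y}\| = \|\vec{y}\|$, it suffices to show
\[ \bigl\|U^T\Phi^T\Phi U - I\bigr\|_2 \le \varepsilon . \]
This gives $(1-\varepsilon)\|\vec{y}\|^2 \le \|\Phi U\vec{y}\|^2 \le (1+\varepsilon)\|\vec{y}\|^2$. Taking square roots then yields the two-sided norm bound, up to the harmless replacement $\sqrt{1\pm\varepsilon}$ versus $1\pm\varepsilon$ in the constants. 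The leverage scores are $\ell_i = \|U(i,:)\|^2$, they sum to $r$, and the sampling probabilities are $p_i = \ell_i/r$.

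\textbf{Step 2: decomposition as an i.i.d. sum.} Write
\[ U^T\Phi^T\Phi U = \sum_{j=1}^s Y_j, \qquad Y_j = \frac{1}{s\,p_{i_j}}\,U(i_j,:)^T U(i_j,:). \]
The $Y_j$ are i.i.d. and positive semidefinite, with $\mathbb{E}\sum_j Y_j = I$. The key uniform bound is
\[ \|Y_j\|_2 = \frac{\ell_{i_j}}{s\,\ell_{i_j}/r} = \frac{r}{s} . \]
This is exactly where leverage-score sampling matters: it makes every summand equally bounded.

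\textbf{Step 3: apply matrix Chernoff.} With $L = r/s$, $\mu_{\min} = \mu_{\max} = 1$ and dimension $r$, the matrix Chernoff inequality gives
\[ \mathbb{P}\bigl(\lambda_{\max} \ge 1+\varepsilon\bigr) + \mathbb{P}\bigl(\lambda_{\min} \le 1-\varepsilon\bigr) \le 2r\exp\!\left(-\frac{c\,\varepsilon^2 s}{r}\right) \]
for an absolute constant $c$ (e.g.\ $c=1/3$ for the upper tail and $c=1/2$ for the lower tail). Requiring the right-hand side to be at most a failure probability $\eta$ gives $s \gtrsim \varepsilon^{-2} r\log(2r/\eta)$. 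This matches the stated $s \ge 2\varepsilon^{-2}r\log(2r)$ up to how the constant and $\eta$ are absorbed.

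The main obstacle is this last step. Extracting precisely the constant $2$ and the $\log(2r)$, rather than a generic $C\varepsilon^{-2}r\log(r/\eta)$, requires the sharper form of the Chernoff bound, using $(e^{\pm\varepsilon}/(1\pm\varepsilon)^{1\pm\varepsilon})^{s/r}$, and a careful choice of which tail dominates. I would first carry out the argument with generic constants, and then track the constants of the two tails separately to see which failure probability the stated threshold actually yields.
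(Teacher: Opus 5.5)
Your approach is correct and is essentially the one the paper relies on. The paper gives no argument of its own beyond the pointer to \cite[\S 9.6.3]{Mar2020}, and the standard proof of that result is your reduction to an orthonormal basis followed by matrix Chernoff for i.i.d.\ rank-one PSD summands with mean $I/s$ and uniform norm bound $r/s$. Your repairs of the statement ($1-\varepsilon$ on the left, a probability qualifier, the $\sqrt{s}$ rescaling counted once) are also the right reading.

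The one thing that does not go through as planned is the constant, and the loss is in Step~1, not in the Chernoff bound. Requiring $\|U^T\Phi^T\Phi U-I\|_2\le\varepsilon$ controls \emph{squared} norms to accuracy $\varepsilon$, and with that reduction the stated threshold is too small. At $s=2\varepsilon^{-2}r\log(2r)$ the upper-tail bound $re^{-\varepsilon^2 s/(3r)}=r(2r)^{-2/3}$ exceeds $1$ for large $r$. Even the sharp form is about $\tfrac12(2r)^{\varepsilon/3}$, so together with the lower tail (about $\tfrac12(2r)^{-\varepsilon/3}$) essentially nothing is left. Your argument as written therefore gives $s\ge 3\varepsilon^{-2}r\log(2r/\eta)$, not the stated bound.

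The fix is to use only what the unsquared inequality actually needs. The eigenvalues of $G=U^T\Phi^T\Phi U$ must lie in $[(1+\varepsilon)^{-2},(1-\varepsilon)^{-2}]$ in your reading. If the sketched norm is in the middle, as in the residual estimate that follows the theorem, they must lie in $[(1-\varepsilon)^2,(1+\varepsilon)^2]$. Either way the allowed relative deviations are about $2\varepsilon$, not $\varepsilon$, which roughly quadruples the Chernoff exponents.

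For the second convention, take $\mathbb{P}(\lambda_{\min}(G)\le 1-\delta)\le re^{-\delta^2 s/(2r)}$ with $\delta=\varepsilon(2-\varepsilon)$, and $\mathbb{P}(\lambda_{\max}(G)\ge 1+\delta)\le re^{-\delta^2 s/((2+\delta)r)}$ with $\delta=\varepsilon(2+\varepsilon)$. At $s=2\varepsilon^{-2}r\log(2r)$ the failure probability is then at most $r(2r)^{-(2-\varepsilon)^2}+r(2r)^{-2(2+\varepsilon)^2/(2+2\varepsilon+\varepsilon^2)}$, which is below $0.3$ for $\varepsilon\le 1/2$; your reading gives a similar bound. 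So the stated threshold yields the embedding with constant probability, and no sharper Chernoff inequality is needed. Working with the exact eigenvalue interval also removes the restriction hidden in your square-root step, since $1/\sqrt{1-\varepsilon}\le 1+\varepsilon$ holds only for $\varepsilon\le(\sqrt{5}-1)/2$.
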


 In our case, if we subsample from $[X, AX]$ with oversampling, and if $\vec{y}$ is an eigenvector of $\widehat{M}$
 \[ \widehat{M}\vec{y} = \lambda \vec{y}\]
 then
 \[ \|S(AX\vec{y} - \lambda X\vec{y})\| =  \|S([X,AX] \begin{bmatrix}-\lambda \vec{y}\\\vec{y}\end{bmatrix})\|\]

 \[ (1 - \varepsilon)\|AX\vec{y}-\lambda X\vec{y}\|\le\|S([X,AX] \begin{bmatrix}-\lambda \vec{y}\\\vec{y}\end{bmatrix})\|  \leq (1 + \varepsilon)\|AX\vec{y}-\lambda X\vec{y}\|.\]
 It follows that 
\[\frac{\|S(A(p)X\vec{y}(p) - \widehat{\lambda}(p) X\vec{y}(p))\|}{\|X\vec{y}(p)\|}\]
is a good estimate of the actual error. Furthermore, $SA(p)X$ and $SX$ have already been computed to solve the least-squares problem. Accordingly, we only need to compute $(SA(p)X)\vec{y}(p)$ that costs $\mathcal{O}(sr)$, which is negligible.

In practice we would like to compute a relative error
\[ \frac{\|S(A(p)X\vec{y}(p) - \widehat{\lambda}(p) X\vec{y}(p))\|}{\|A(p)\|\|X\vec{y}(p)\|}.\]
As computing $\|A(p)\|$ requires access to the the entire matrix $A(p)$, every time we access $\|A(p)\|$ (for example in the snapshot points), we save its norm and for a new parameter $p_*$, we employ the norm of the closest snapshot point.

In practice, we subsample from $X \in \C^{n \times r}$ and not from $[X, A(p)X] \in \C^{n \times 2r}$. This method can still be used to get a rough estimate on the residual; if this estimate is large, it suggests that the snapshot $X$ or the subsampling $S$ need to be updated. If oversampling is not desirable, LUPP or CPQR are usually slightly faster, but without oversampling we have no information on the residual.

 \section{Related algorithms}\label{sec: related}

 There are various algorithms and strategies in the literature that exploit information from previous problems (in our case, eigenpairs already computed for the snapshot points). These include: Krylov subspace recycling \cite{Kil2006, Par2006}, rational Krylov methods \cite{Sim2010}, Taylor/ Chebyshev expansions \cite{Cor2025/2, Cor2023, Jar2022}, model order reduction through snapshots \cite{Mar2006, Gui2024}. One notable algorithm that exploits an idea similar to subsampling (besides snapshots) is DEIM \cite{Cha2010}.

 The idea is pretty similar,
 but in DEIM we 
 are able to evaluate exactly (a small number of entries of) the vectors that we want to approximate (the eigenvectors in our case). The setting of DEIM is as follows: given a parametrized vector $\vec{x}(p) \in \C^{n}$, where $p \in \Omega$, the goal is to find a subspace of dimension $r \ll n$ such that $\vec{x}(p)$ belongs to this subspace up to a small error. In other words, the goal is merely model order reduction, and no linear systems or eigenvalue problems are involved. More precisely, first a snapshot matrix is formed 
 \[ X = [\vec{x}(p_0),\dots,\vec{x}(p_{r-1)} ]\]
 is formed, and then the least square problem \[\| X\vec{c}(p) - \vec{x}(p)\|_2\]
 is solved using subsampling. The crucial assumption made here is that we are able to evaluate a single entry of $\vec{x}(p)$ relatively quickly
 (but not the entire $\vec{x}(p)$). This assumption is (usually) not satisfied if $\vec{x}(p)$ is chosen as an eigenvector of $A(p)$.

  \section{SVD and GEP}\label{sec: svd}

  The same algorithm can be applied for SVD. Instead of one snapshot matrix, we need two, $X_r$ and $X_{\ell}$, respectively for right and left singular vectors. Once we have these, we can solve the least-square problem

  \[ M(p) = \operatorname{argmin}_{M \in \C^{r \times r}}\|X_{\ell}M - A(p)X_r\|_F.\]

  If we want to sketch or subsample this problem, we can extract row indices from $X_{\ell}$. Again, we do not need to look at the entire $A(p)$. The remainder of the algorithm is very similar to eigenvalues; the only difference is that we are dealing with two snapshot matrices instead of one.
  Another variant is to just approximate $X_{\ell}^{\dagger}A(p)$ using sketching. We can do this by sketch-and-solve the least-square problem
  \begin{equation}\label{eq: lsqsvd}
 {\arg\min}_{M \in \C^{r \times n}}\|A(p) -X_{\ell}M\|_F.
 \end{equation}
  In this case, we can still select rows from $X_{\ell}$. 
  This second strategy is appealing because solving \eqref{eq: lsqsvd} is equivalent to the Rayleigh-Ritz method on $A(p)(A(p))^*$. The disadvantage of this second method is that the subsampled least-square problem 
  \[ {\arg\min}_{M \in \C^{r \times n}}\|SA(p) -SX_{\ell}M\| \]
  has dimension $s \times n$, which still depends on $n$ and $M \in \C^{r \times n}$. Even though the cost of the two subsampled least-squares is asymptotically the same ($\mathcal{O}(nr^2)$), computing the left singular vectors for the reduced matrix costs $\mathcal{O}(r^3)$ in the first method while it costs $\mathcal{O}(nr^2)$ in the second method.

  For the generalized eigenvalue problem, we have a sequence of pairs of matrices $(A(p), B(p))$ and we want to solve
  \[ A(p)\vec{x}(p) = \lambda(p)B(p)\vec{x}(p).\]
  In this case, the least-square to solve is
  \[ \widehat{M}(p) = \text{argmin}_{M \in \C^{r \times r}}\|S(B(p)XM - A(p)X)\|_F.\]
 In this case, we can select the rows from $B(p)X$ in the snapshot points and for a new value of the parameter we can employ the same indices of the nearest snapshot point. If we employ this heuristic, we do not have to access $B(p)$ for selecting the rows. In the case of time-dependent problems, the snapshot points get further and further from the current value of the parameter, so updating the set of rows requires accessing the entire $B(p)$.

\section{Numerical Experiments}\label{sec: experiments}

In this section, we provide some numerical experiments comparing our algorithm with a direct extension of \cite{Gui2024} applied to eigenvalue problems, named \texttt{MOR}, that builds the same snapshot matrix in the offline phase but it computes the projection of $A(p)$ in the online phase without using row subsampling. In our method we use two subsampling strategies for our row extraction: LUPP (referred to as \texttt{LU}) and Leverage scores (referred to as \texttt{Leverage}). Our algorithm \Cref{alg} is named \texttt{SubApSnapEig}.

For all our experiments, the initial eigenpairs for the snapshot points are computed using \texttt{eigs} in \texttt{MATLAB}, possibly employing a buffer (i.e. we create $X$ using more eigenpairs per parameter than the ones we want) to capture a richer initial subspace. We extract the row indices from the orthogonal basis $Q$ of the snapshot matrix $X$. The matrix $Q$ is subsequently used to solve the projected least-squares problem. 

 In our examples, we update the snapshot matrix $X$ only for the time-dependent scenario and only for \Cref{sub:eigcen}. When we updated $X$, we just add the refined eigenvectors to $X$ (which then has an increasing number of columns). The matrix $S$ is extracted from $X$ and is updated if and only if $X$ is updated.

For the plots, we compute the residual for the plots as follows:
\begin{equation}
    \frac{\|A(p)X\vec{y}(p) - \widehat{\lambda}(p)X\vec{y}(p)\|_F}{\|X\vec{y}(p)\|_F}
\end{equation}
where $(\widehat{\lambda}(p), \vec{y}(p))$ is the computed eigenpair of the projected matrix $\widehat{M}(p)$. For the eigenvalues error, we plot the relative error $\frac{|\lambda(p) - \widehat{\lambda}(p)|}{|\lambda(p)|}$, and for the eigenvectors error, we plot the subspace distance to the exact eigenspace using the \texttt{subspace} function in \texttt{MATLAB}.

The experiments are conducted on a workstation named \textit{nightcrawler} provided by the Mathematical Institute, University of Oxford, equipped with a 64-core (128-thread) processor running at a base frequency of 2.50~GHz (up to 4.10~GHz boost), 2048~GB of system memory, and 9.7~TB of storage, running a Linux operating system. The code is available at \url{https://github.com/fhrobat/SubApSnapEig}.

\pgfplotstableset{
    times table style/.style={
        col sep=tab,
        every head row/.style={before row=\toprule,after row=\midrule},
        every last row/.style={after row=\bottomrule},
        columns={Metodo, Offline_sec, Online_sec, Totale_sec, Ratio_Off_vs_POD, Ratio_On_vs_POD, Ratio_vs_POD},
        columns/Metodo/.style={
        string type, 
        column name=Method,
        string replace={POD}{MOR},
        postproc cell content/.append style={
        @cell content/.add={\ttfamily}{}}},
        columns/Offline_sec/.style={column name=Offline(s), numeric type, fixed, fixed zerofill, precision=3},
        columns/Online_sec/.style={column name=Online(s), numeric type, fixed, fixed zerofill, precision=3},
        columns/Totale_sec/.style={column name=Total(s), numeric type, fixed, fixed zerofill, precision=3},
        columns/Ratio_Off_vs_POD/.style={column name=Ratio Off., numeric type, fixed, fixed zerofill, precision=3},
        columns/Ratio_On_vs_POD/.style={column name=Ratio On., numeric type, fixed, fixed zerofill, precision=3, postproc cell content/.append style={
        @cell content/.add={\bfseries\boldmath}{}
        }},
        columns/Ratio_vs_POD/.style={column name=Ratio Tot., numeric type, fixed, fixed zerofill, precision=3},
    }
}

\subsection{Example 1: Exponential Differential Operator}\label{sub:exp}

In the first example, we compute the smallest eigenvalue of the 3-point stencil finite-difference discretization of the following differential operator
\[ \frac{d}{dx} \left(e^{p x}\frac{du}{dx} \right) \qquad x \in \Omega =[-10,10]\]
with zero boundary conditions. This is a particular example of a Sturm-Liouville eigenvalue problem \cite[\S 2.4]{Al-2008}.
The parameter space is the interval $p\in[0.1,1]$, 
which is discretized using $3000$ log-spaced points, while we have discretized the domain $\Omega$ with a uniform grid such that the resulting dimension of $A(p) \in \R^{n \times n}$ is $n = 2\times 10^6$. We divide each matrix by $n^2$ to have $\|A(p)\|_2 \approx 1$.

In \Cref{fig:exp_k5_tol1e-6_freqInf_equi} and \Cref{tab:exp_k5_tol1e-6_freqInf_equi} we report the results for $10$ log-spaced snapshot points in the parameter vector.

 The error on the eigenvalue and eigenvector for \texttt{SubApSnapEig} is of order $10^{-4}$ while the \texttt{MOR} method achieves an error of approximately $4$ orders of magnitude smaller. We also note that the error does not significantly dip around the snapshot points.

The \texttt{SubApSnapEig} method is $20 \times$ faster than \texttt{MOR} method and the \texttt{LU} variant is faster than the \texttt{Leverage} variant. This is because we have to compute more rows of $A(p)$ in the \texttt{Leverage} variant.

% ==================== EXP FIG 1 (EQUI) ====================
\begin{figure}[ht]
\centering
\begin{tikzpicture}
\begin{groupplot}[
    group style={group size=2 by 2, horizontal sep=2cm, vertical sep=0.15\textwidth},
    width=0.45\textwidth,
    height=0.4\textwidth,
    grid=both,
    legend style={
    at={(-0.3,-0.5)},
    anchor=south,
    legend columns=4,
    font=\small
    },
]

% Residual
\nextgroupplot[title={Residual}, xlabel={Parameter value}, ymode=log]
\addplot+ [mark=none] table [x=p, y expr=\thisrow{Res_Leverage}/(4e12)] {experiments_new/exponential/k10_s0_tolInf_freqInf/res.dat};
\addplot+ [mark=none] table [x=p, y expr=\thisrow{Res_LU}/(4e12)] {experiments_new/exponential/k10_s0_tolInf_freqInf/res.dat};
\addplot+ [mark=none] table [x=p, y expr=\thisrow{Res_POD}/(4e12)] {experiments_new/exponential/k10_s0_tolInf_freqInf/res.dat};

% Eigenvalues
\nextgroupplot[title={Eigenvalues error}, xlabel={Parameter value}, ymode=log]
\addplot+ [mark=none] table [x=p, y=ErrLam_Leverage] {experiments_new/exponential/k10_s0_tolInf_freqInf/eigval.dat};
\addplot+ [mark=none] table [x=p, y=ErrLam_LU] {experiments_new/exponential/k10_s0_tolInf_freqInf/eigval.dat};
\addplot+ [mark=none] table [x=p, y=ErrLam_POD] {experiments_new/exponential/k10_s0_tolInf_freqInf/eigval.dat};

% Eigenvectors
\nextgroupplot[title={Eigenvectors error}, xlabel={Parameter value}, ymode=log]
\addplot+ [mark=none] table [x=p, y=ErrAng_Leverage] {experiments_new/exponential/k10_s0_tolInf_freqInf/eigvec.dat};
\addplot+ [mark=none] table [x=p, y=ErrAng_LU] {experiments_new/exponential/k10_s0_tolInf_freqInf/eigvec.dat};
\addplot+ [mark=none] table [x=p, y=ErrAng_POD] {experiments_new/exponential/k10_s0_tolInf_freqInf/eigvec.dat};

% Plot
\nextgroupplot[title={Eigenvalues plot}, xlabel={Parameter value}]
\addplot+ [mark=none] table [x=p, y expr=\thisrow{Lev_eig_1}/(4e12)] {experiments_new/exponential/k10_s0_tolInf_freqInf/plot.dat};
\addlegendentry{Leverage}

\addplot+ [mark=none] table [x=p, y expr=\thisrow{LU_eig_1}/(4e12)] {experiments_new/exponential/k10_s0_tolInf_freqInf/plot.dat};
\addlegendentry{LU}

\addplot+ [mark=none] table [x=p, y expr=\thisrow{POD_eig_1}/(4e12)] {experiments_new/exponential/k10_s0_tolInf_freqInf/plot.dat};
\addlegendentry{\texttt{MOR}}

\addplot+ [mark=o, mark repeat=5] table [x=p, y expr=\thisrow{Ex_eig_1}/(4e12)] {experiments_new/exponential/k10_s0_tolInf_freqInf/plot.dat};
\addlegendentry{Exact}

\end{groupplot}
\end{tikzpicture}
\caption{Results for experiments Exponential Differential Operator in \Cref{sub:exp} for $10$ log-spaced snapshot points.
}\label{fig:exp_k5_tol1e-6_freqInf_equi}
\end{figure}

\begin{table}[ht]
\centering
\caption{Computational times for experiments Exponential Differential Operator in \Cref{sub:exp} for $10$ log-spaced snapshot points.}\label{tab:exp_k5_tol1e-6_freqInf_equi}
\begin{filecontents*}{experiments_new/exponential/k10_s0_tolInf_freqInf/times.dat}
Metodo	Offline_sec	Online_sec	Totale_sec	Ratio_Off_vs_POD	Ratio_On_vs_POD	Ratio_vs_POD
LU	9.281922	64.555181	73.837103	0.999341518628584	0.0468457673647863	0.0532226575490803
Leverage	9.22374	71.112848	80.336588	0.993077332370948	0.0516044705080358	0.0579075632448033
POD	9.288038	1378.036579	1387.324617	1	1	1
\end{filecontents*}
\pgfplotstabletypeset[times table style]{experiments_new/exponential/k10_s0_tolInf_freqInf/times.dat}
\end{table}

\subsection{Example 2: Differential operator with multiple eigenvalues}\label{sub:ns}

In the second example, we compute the smallest $3$ eigenvalues of the $5$-point stencil finite-difference discretization of the differential operator
\[-p \Delta \mathbf{u} - \nabla (\nabla \cdot \mathbf{u}) \quad \Omega = [-1,1]^2\]
with zero boundary conditions. The computation of the smallest eigenvalue of this operator is studied in various works \cite{Hen2026, Kha2026}.
Note that the discretization of the operator has the second smallest eigenvalue of multiplicity $2$ because of the symmetry of the system. For this experiment,  we choose the dimension of the matrix $n$ equal to $250000$.  The parameter $p$ is log-spaced between $10^{-4}$ and $10^{-3}$. 

In \Cref{fig:ns_k10_s0_tolInf_freqInf} and \Cref{tab:ns_k10_s0_tolInf_freqInf} we report the results for $10$ log-spaced snapshot points. 
Again we divide the matrices by $n^2$ to get $\|A(p)\|_2 \approx 1$. We observe that the error for \texttt{SubApSnapEig} methods is two orders of
 magnitude larger than \texttt{MOR}. In this example, we observe that the errors dip around the snapshot points.

In this case \texttt{LU} variant of \texttt{SubApSnapEig} is significantly faster, as a single row of $A(p)$ has more non-zero entries than the example in \Cref{sub:exp}. \texttt{SubApSnapEig} is approximately $25\times$ faster.

% ==================== NS FIG 1 ====================
\begin{figure}[ht]
\centering
\caption{Results for experiments for the differential operator in \Cref{sub:ns} for $10$ log-spaced snapshot points.}\label{fig:ns_k10_s0_tolInf_freqInf}
\begin{tikzpicture}
\begin{groupplot}[
    group style={group size=2 by 2, horizontal sep=2cm, vertical sep=0.15\textwidth},
    width=0.45\textwidth,
    height=0.4\textwidth,
    grid=both,
    legend style={
    at={(-0.3,-0.5)},
    anchor=south,
    legend columns=4,
    font=\small
    },
]

% Residual
\nextgroupplot[title={Residual}, xlabel={Parameter value}, ymode=log]
\addplot+ [mark=none] table [x=p, y expr=\thisrow{Res_Leverage}/(sqrt(3)*6.25e10)] {experiments_new/nc/k10_s0_tolInf_freqInf/res.dat};
\addplot+ [mark=none] table [x=p, y expr=\thisrow{Res_LU}/(sqrt(3)*6.25e10)] {experiments_new/nc/k10_s0_tolInf_freqInf/res.dat};
\addplot+ [mark=none] table [x=p, y expr=\thisrow{Res_POD}/(sqrt(3)*6.25e10)] {experiments_new/nc/k10_s0_tolInf_freqInf/res.dat};

% Eigenvalues
\nextgroupplot[title={Eigenvalues error}, xlabel={Parameter value}, ymode=log]
\addplot+ [mark=none] table [x=p, y=ErrLam_Leverage] {experiments_new/nc/k10_s0_tolInf_freqInf/eigval.dat};
\addplot+ [mark=none] table [x=p, y=ErrLam_LU] {experiments_new/nc/k10_s0_tolInf_freqInf/eigval.dat};
\addplot+ [mark=none] table [x=p, y=ErrLam_POD] {experiments_new/nc/k10_s0_tolInf_freqInf/eigval.dat};

% Eigenvectors
\nextgroupplot[title={Eigenvectors error}, xlabel={Parameter value}, ymode=log]
\addplot+ [mark=none] table [x=p, y=ErrAng_Leverage] {experiments_new/nc/k10_s0_tolInf_freqInf/eigvec.dat};
\addplot+ [mark=none] table [x=p, y=ErrAng_LU] {experiments_new/nc/k10_s0_tolInf_freqInf/eigvec.dat};
\addplot+ [mark=none] table [x=p, y=ErrAng_POD] {experiments_new/nc/k10_s0_tolInf_freqInf/eigvec.dat};

% Plot
\nextgroupplot[title={Eigenvalues plot}, xlabel={Parameter value}]

% --- PRIMI AUTOVALORI (Linea continua) ---
\addplot+ [mark=none] table [x=p, y expr=\thisrow{Lev_eig_1}/(6.25e10)] {experiments_new/nc/k10_s0_tolInf_freqInf/plot.dat};
\addlegendentry{Leverage}
\addplot+ [mark=none] table [x=p, y expr=\thisrow{LU_eig_1}/(6.25e10)] {experiments_new/nc/k10_s0_tolInf_freqInf/plot.dat};
\addlegendentry{LU}
\addplot+ [mark=none] table [x=p, y expr=\thisrow{POD_eig_1}/(6.25e10)] {experiments_new/nc/k10_s0_tolInf_freqInf/plot.dat};
\addlegendentry{\texttt{MOR}}
\addplot+ [mark=o, mark repeat=5] table [x=p, y expr=\thisrow{Ex_eig_1}/(6.25e10)] {experiments_new/nc/k10_s0_tolInf_freqInf/plot.dat};
\addlegendentry{Exact}

% --- SECONDI AUTOVALORI (Linea tratteggiata) ---
\addplot+ [mark=none, densely dashed] table [x=p, y expr=\thisrow{Lev_eig_2}/(6.25e10)] {experiments_new/nc/k10_s0_tolInf_freqInf/plot.dat};
\addplot+ [mark=none, densely dashed] table [x=p, y expr=\thisrow{LU_eig_2}/(6.25e10)] {experiments_new/nc/k10_s0_tolInf_freqInf/plot.dat};
\addplot+ [mark=none, densely dashed] table [x=p, y expr=\thisrow{POD_eig_2}/(6.25e10)] {experiments_new/nc/k10_s0_tolInf_freqInf/plot.dat};
\addplot+ [mark=square, mark repeat=5, densely dashed] table [x=p, y expr=\thisrow{Ex_eig_2}/(6.25e10)] {experiments_new/nc/k10_s0_tolInf_freqInf/plot.dat};

% --- TERZI AUTOVALORI (Linea puntinata) ---
\addplot+ [mark=none, densely dotted] table [x=p, y expr=\thisrow{Lev_eig_3}/(6.25e10)] {experiments_new/nc/k10_s0_tolInf_freqInf/plot.dat};
\addplot+ [mark=none, densely dotted] table [x=p, y expr=\thisrow{LU_eig_3}/(6.25e10)] {experiments_new/nc/k10_s0_tolInf_freqInf/plot.dat};
\addplot+ [mark=none, densely dotted] table [x=p, y expr=\thisrow{POD_eig_3}/(6.25e10)] {experiments_new/nc/k10_s0_tolInf_freqInf/plot.dat};
\addplot+ [mark=triangle, mark repeat=5, densely dotted] table [x=p, y expr=\thisrow{Ex_eig_3}/(6.25e10)] {experiments_new/nc/k10_s0_tolInf_freqInf/plot.dat};

\end{groupplot}
\end{tikzpicture}
\end{figure}

\begin{table}[ht]
\centering
\caption{Computational times for experiments for the differential operator in \Cref{sub:ns} for $10$ log-spaced snapshot points. }\label{tab:ns_k10_s0_tolInf_freqInf}
\begin{filecontents*}{experiments_new/nc/k10_s0_tolInf_freqInf/times.dat}
Metodo	Offline_sec	Online_sec	Totale_sec	Ratio_Off_vs_POD	Ratio_On_vs_POD	Ratio_vs_POD
LU	249.480027	11.511795	260.991822	0.976601449167882	0.0249348179104987	0.363937883768313
Leverage	254.718087	26.565296	281.283383	0.997106084542274	0.05754105406659	0.392233282881224
POD	255.457359	461.675519	717.132878	1	1	1
\end{filecontents*}
\pgfplotstabletypeset[times table style]{experiments_new/nc/k10_s0_tolInf_freqInf/times.dat}
\end{table}

\subsection{Example 3: PageRank}\label{sub:pagerank}

In this experiment, we compute the leading eigenvalue and eigenvector of a parametric PageRank problem \cite{Bri1998} from the \texttt{web-NotreDame} web graph dataset of dimension $n=325729$. We first remove existing self-loops while we add self-loops on dangling nodes. The base column-stochastic transition matrix is then constructed as $M = A D^{-1}$, where $D$ is the diagonal matrix containing the out-degrees of the nodes. 

We consider a degree-biased PageRank model where the damping factor is fixed at $d = 0.9$, while the link weights are modulated dynamically by a parameter $p$. Dynamic PageRank has been investigated in various works, see \cite{Ban2022, Mei2010}. Specifically, the matrix $A(p)$ is first obtained computing a column-stochastic matrix $W(p)$ whose entries are obtained by column-normalizing the weighted matrix
\[  M_{ij}(1 + \text{in\_degree}_i)^p\]
and adding a rank-one matrix
\[ A(p) = dW(p) + \frac{(1-d)}{n} \vec{1}\vec{1}^T. \]
 The vector $\vec{1}$ is the vector of all ones. The parameter space  is the interval $p\in[0, 0.5]$, which is discretized using $1000$ equispaced points.

Even though $A(p)$ is not affine, we can compute matvecs with $A(p)$ by computing matvecs with $W(p)$, which is sparse and with $\vec{1}\vec{1}^T$. As the matrix $A(p)$ is dense, we modify our implementations to handle only matvecs with $A(p)$ or to extract some rows from $A(p)$. This makes the online phase of \texttt{MOR} method feasible and fast (but not faster than \texttt{SubApSnapEig}).

In \Cref{fig:pagerank_k10_tol1e-6_freqInf_equi} and \Cref{tab:pagerank_k10_tol1e-6_freqInf_equi} we report the performances of  using $5$ equispaced snapshot points across the parameter domain.

In this example the errors of all our methods are almost the same and \texttt{MOR} is more accurate except for the eigenvalue error, where \texttt{LU} variant of \texttt{SubApSnapEig} achieves an accuracy of $2$ orders of magnitude smaller. This behaviour is unclear.

As said before, the \texttt{MOR} method is only $2 \times$ slower for this example thanks to the efficient matvec implementation.

% ==================== PAGERANK FIG 1 (EQUI) ====================
\begin{figure}[ht]
\centering
\begin{tikzpicture}
\begin{groupplot}[
    group style={group size=2 by 2, horizontal sep=2cm, vertical sep=0.15\textwidth},
    width=0.45\textwidth,
    height=0.4\textwidth,
    grid=both,
    legend style={
    at={(-0.3,-0.5)},
    anchor=south,
    legend columns=4,
    font=\small
    },
]

% Residual
\nextgroupplot[title={Residual}, xlabel={Parameter value}, ymode=log]
\addplot+ [mark=none] table [x=p, y=Res_Leverage] {experiments_new/pagerank/k5_s0_tolInf_freqInf/res.dat};
\addplot+ [mark=none] table [x=p, y=Res_LU] {experiments_new/pagerank/k5_s0_tolInf_freqInf/res.dat};
\addplot+ [mark=none] table [x=p, y=Res_POD] {experiments_new/pagerank/k5_s0_tolInf_freqInf/res.dat};

% Eigenvalues
\nextgroupplot[title={Eigenvalues error}, xlabel={Parameter value}, ymode=log]
\addplot+ [mark=none] table [x=p, y=ErrLam_Leverage] {experiments_new/pagerank/k5_s0_tolInf_freqInf/eigval.dat};
\addplot+ [mark=none] table [x=p, y=ErrLam_LU] {experiments_new/pagerank/k5_s0_tolInf_freqInf/eigval.dat};
\addplot+ [mark=none] table [x=p, y=ErrLam_POD] {experiments_new/pagerank/k5_s0_tolInf_freqInf/eigval.dat};

% Eigenvectors
\nextgroupplot[title={Eigenvectors error}, xlabel={Parameter value}, ymode=log]
\addplot+ [mark=none] table [x=p, y=ErrAng_Leverage] {experiments_new/pagerank/k5_s0_tolInf_freqInf/eigvec.dat};
\addplot+ [mark=none] table [x=p, y=ErrAng_LU] {experiments_new/pagerank/k5_s0_tolInf_freqInf/eigvec.dat};
\addplot+ [mark=none] table [x=p, y=ErrAng_POD] {experiments_new/pagerank/k5_s0_tolInf_freqInf/eigvec.dat};

% Plot
\nextgroupplot[
title={Eigenvalues plot}, 
xlabel={Parameter value},
enlarge y limits=0.1,
yticklabel style={
        /pgf/number format/.cd,
        fixed,
        fixed zerofill,
        precision=6
    }
]
\addplot+ [mark=none] table [x=p, y=Lev_eig_1] {experiments_new/pagerank/k5_s0_tolInf_freqInf/plot.dat}; \addlegendentry{Leverage}
\addplot+ [mark=none] table [x=p, y=LU_eig_1] {experiments_new/pagerank/k5_s0_tolInf_freqInf/plot.dat}; \addlegendentry{LU}
\addplot+ [mark=none] table [x=p, y=POD_eig_1] {experiments_new/pagerank/k5_s0_tolInf_freqInf/plot.dat}; \addlegendentry{\texttt{MOR}}
\addplot+ [mark = o, mark repeat=10] table [x=p, y=Ex_eig_1] {experiments_new/pagerank/k5_s0_tolInf_freqInf/plot.dat}; \addlegendentry{Exact}

\end{groupplot}
\end{tikzpicture}
\caption{Results for experiment PageRank in \Cref{sub:pagerank} for $5$ equispaced snapshot points.}\label{fig:pagerank_k10_tol1e-6_freqInf_equi}
\end{figure}

\begin{table}[ht]
\centering
\caption{Computational times for experiment PageRank in \Cref{sub:pagerank} for $5$ equispaced snapshot points.}\label{tab:pagerank_k10_tol1e-6_freqInf_equi}
\begin{filecontents*}{experiments_new/pagerank/k5_s0_tolInf_freqInf/times.dat}
Metodo	Offline_sec	Online_sec	Totale_sec	Ratio_Off_vs_POD	Ratio_On_vs_POD	Ratio_vs_POD
LU	12.157492	11.740511	23.898003	1.02983119905636	0.453197015236948	0.633709322306329
Leverage	12.76067	12.064548	24.825218	1.08092492159259	0.465705210257278	0.658296514352554
POD	11.805325	25.905976	37.711301	1	1	1
\end{filecontents*}
\pgfplotstabletypeset[
    times table style,
    display columns/5/.style={fixed, precision=4}, 
]{experiments_new/pagerank/k5_s0_tolInf_freqInf/times.dat}
\end{table}

\subsection{Example 4: Gaussian Kernel}\label{sub:kern}
In this example, we apply our algorithms to a different setting than discretization of PDEs. In particular, we apply our algorithm to the Gaussian kernel matrix \cite{Ras2005}
produced from the ``regular" MNIST dataset \cite{Lec1998}. The dimension of the dataset is $n = 60000$. The gaussian kernel  can be seen as a pointwise exponential of the matrix where each entry represents the square distance between two points in the dataset divided by $2p^2$ where $p$ is a parameter.
The spectral properties of the gaussian kernel are crucial in statistics and machine learning \cite{Rea1983, Ha1986}. The matrix $A(p)$ is defined as  
\[ A_{ij}(p)=\exp(-\|x_i-x_j\|^2/2p^2).\]
The set of parameters
is chosen as $1000$ equispaced points in $[0.5p_*, 1.5p_*]$ where $p_*$ is the median of the distances between two different objects in the dataset.

In \Cref{fig:kernel_k5_tol1e-8_freqInf} and \Cref{tab:kernel_k5_tol1e-8_freqInf} we report the results for $10$ equispaced snapshot points.
In this example the errors of the different algorithms are pretty similar except the \texttt{MOR} error on eigenvalues for the first half of the parameter domain, which is order of machine precision and several order of magnitudes better than \texttt{SubApSnapEig} error.

\texttt{SubApSnapEig} is $250 \times$ faster in the online phase as we have to compute only $sn$ scalar exponentials instead of $n^2$.

% ==================== KERNEL FIG 1 ====================
\begin{figure}[ht]
\centering
\begin{tikzpicture}
\begin{groupplot}[
    group style={group size=2 by 2, horizontal sep=2cm, vertical sep=0.15\textwidth},
    width=0.45\textwidth,
    height=0.4\textwidth,
    grid=both,
    legend style={
    at={(-0.3,-0.5)},
    anchor=south,
    legend columns=4,
    font=\small
    },
]

% Residual
\nextgroupplot[title={Residual}, xlabel={Parameter value}, ymode=log]
\addplot+ [mark=none] table [x=p, y=Res_Leverage] {experiments_new/kernel/k10_s0_tolInf_freqInf/res.dat};
\addplot+ [mark=none] table [x=p, y=Res_LU] {experiments_new/kernel/k10_s0_tolInf_freqInf/res.dat};
\addplot+ [mark=none] table [x=p, y=Res_POD] {experiments_new/kernel/k10_s0_tolInf_freqInf/res.dat};

% Eigenvalues
\nextgroupplot[title={Eigenvalues error}, xlabel={Parameter value}, ymode=log]
\addplot+ [mark=none] table [x=p, y=ErrLam_Leverage] {experiments_new/kernel/k10_s0_tolInf_freqInf/eigval.dat};
\addplot+ [mark=none] table [x=p, y=ErrLam_LU] {experiments_new/kernel/k10_s0_tolInf_freqInf/eigval.dat};
\addplot+ [mark=none] table [x=p, y=ErrLam_POD] {experiments_new/kernel/k10_s0_tolInf_freqInf/eigval.dat};

% Eigenvectors
\nextgroupplot[title={Eigenvectors error}, xlabel={Parameter value}, ymode=log]
\addplot+ [mark=none] table [x=p, y=ErrAng_Leverage] {experiments_new/kernel/k10_s0_tolInf_freqInf/eigvec.dat};
\addplot+ [mark=none] table [x=p, y=ErrAng_LU] {experiments_new/kernel/k10_s0_tolInf_freqInf/eigvec.dat};
\addplot+ [mark=none] table [x=p, y=ErrAng_POD] {experiments_new/kernel/k10_s0_tolInf_freqInf/eigvec.dat};

% Plot
\nextgroupplot[title={Eigenvalues plot}, xlabel={Parameter value}]
\addplot+ [mark=none] table [x=p, y=Lev_eig_1] {experiments_new/kernel/k10_s0_tolInf_freqInf/plot.dat}; \addlegendentry{Leverage}
\addplot+ [mark=none] table [x=p, y=LU_eig_1] {experiments_new/kernel/k10_s0_tolInf_freqInf/plot.dat}; \addlegendentry{LU}
\addplot+ [mark=none] table [x=p, y=POD_eig_1] {experiments_new/kernel/k10_s0_tolInf_freqInf/plot.dat}; \addlegendentry{\texttt{MOR}}
\addplot+ [mark = o, mark repeat=5] table [x=p, y=Ex_eig_1] {experiments_new/kernel/k10_s0_tolInf_freqInf/plot.dat}; \addlegendentry{Exact}

\end{groupplot}
\end{tikzpicture}
\caption{Results for experiments Gaussian Kernel in \Cref{sub:kern} for $10$ equispaced snapshot points. No periodic updates.}\label{fig:kernel_k5_tol1e-8_freqInf}
\end{figure}

\begin{table}[ht]
\centering
\caption{Computational times for experiments Gaussian Kernel in \Cref{sub:kern} for $10$ equispaced snapshot points.}\label{tab:kernel_k5_tol1e-8_freqInf}
\begin{filecontents*}{experiments_new/kernel/k10_s0_tolInf_freqInf/times.dat}
Metodo	Offline_sec	Online_sec	Totale_sec	Ratio_Off_vs_POD	Ratio_On_vs_POD	Ratio_vs_POD
LU	13.033052	1.882018	14.91507	1.43689046369603	0.003887277981233	0.0302402980531424
Leverage	13.32906	1.992964	15.322024	1.46952526576524	0.00411643516405796	0.0310653971142878
POD	9.070317	484.148036	493.218353	1	1	1
\end{filecontents*}
\pgfplotstabletypeset[times table style]{experiments_new/kernel/k10_s0_tolInf_freqInf/times.dat}
\end{table}

\subsection{Example 5: Graph Eigenvector Centrality}\label{sub:eigcen}

In this example, we compute the leading eigenpair to evaluate the \textit{eigenvector centrality} \cite[\S 7]{New2010} of nodes within a parametrically varying network. In particular, the parameter-dependent matrix is defined as a symmetric diagonal scaling, taking the form $A(p) = D(p) A_{\text{0}} D(p)$. Here, $A_{\text{0}}$ is the adjacency matrix of the graph \textit{coAuthors-DBLP} \cite{Dav2011, Bad2013} with dimension $n = 299067$. The parameter dependence is embedded in the diagonal matrix $D(p)$, where each $i$-th entry is equal to $1 + 0.5 \sin(2\pi p / 1000 + \Phi_i)$, with the phases $\Phi_i$ being uniformly distributed random variables in $[0, 2\pi]$. The set of parameters is chosen as $1000$ integer points uniformly spaced in the interval $[1, 1000]$. This model simulates the evolution of the intensity of connections between nodes.

In \Cref{fig:graph_k50_freq20_start} and \Cref{tab:graph_k50_freq20_start} we report the results for $20$ snapshot points concentrated at the start of the parameter space. We force an update of the snapshot matrix every $100$ steps.

In \Cref{fig:graph_k50_freq20_start}  we can clearly see when the snapshot matrix is updated and how the errors are reduced for the upcoming value of the parameters. Furthermore, not updating the snapshot matrix achieves a worse accuracy. Towards the end of the parameter domain, as the matrix $A(p)$ is periodic, the errors decrease as expected. 

The \texttt{SubApSnapEig} method is $\approx6\times$ faster in the online phase. Note that, the more frequently the snapshot matrix is updated, the less efficient is our algorithm compared to the \texttt{MOR} method.

% In \Cref{fig:graph_k50_freq100_start} and \Cref{tab:graph_k50_freq100_start} we report the results for $50$ snapshot points concentrated at the start of the parameter space. We force an update of the snapshot matrix every $100$ steps.

% ==================== GRAPH FIG 1 ====================
\begin{figure}[ht]
\centering
\begin{tikzpicture}
\begin{groupplot}[
    group style={group size=2 by 2, horizontal sep=2cm, vertical sep=0.15\textwidth},
    width=0.45\textwidth,
    height=0.4\textwidth,
    grid=both,
    legend style={
    at={(-0.3,-0.5)},
    anchor=south,
    legend columns=4,
    font=\small
    },
]

% Residual
\nextgroupplot[title={Residual}, xlabel={Parameter value}, ymode=log]
\addplot+ [mark=none] table [x=p, y=Res_Leverage] {experiments_new/eig_centrality/k20_s0_tolInf_freq100/res.dat};
\addplot+ [mark=none] table [x=p, y=Res_LU] {experiments_new/eig_centrality/k20_s0_tolInf_freq100/res.dat};
\addplot+ [mark=none] table [x=p, y=Res_POD] {experiments_new/eig_centrality/k20_s0_tolInf_freq100/res.dat};

% Eigenvalues
\nextgroupplot[title={Eigenvalues error}, xlabel={Parameter value}, ymode=log]
\addplot+ [mark=none] table [x=p, y=ErrLam_Leverage] {experiments_new/eig_centrality/k20_s0_tolInf_freq100/eigval.dat};
\addplot+ [mark=none] table [x=p, y=ErrLam_LU] {experiments_new/eig_centrality/k20_s0_tolInf_freq100/eigval.dat};
\addplot+ [mark=none] table [x=p, y=ErrLam_POD] {experiments_new/eig_centrality/k20_s0_tolInf_freq100/eigval.dat};

% Eigenvectors
\nextgroupplot[title={Eigenvectors error}, xlabel={Parameter value}, ymode=log]
\addplot+ [mark=none] table [x=p, y=ErrAng_Leverage] {experiments_new/eig_centrality/k20_s0_tolInf_freq100/eigvec.dat};
\addplot+ [mark=none] table [x=p, y=ErrAng_LU] {experiments_new/eig_centrality/k20_s0_tolInf_freq100/eigvec.dat};
\addplot+ [mark=none] table [x=p, y=ErrAng_POD] {experiments_new/eig_centrality/k20_s0_tolInf_freq100/eigvec.dat};

% Plot
\nextgroupplot[title={Eigenvalues plot}, xlabel={Parameter value}]
\addplot+ [mark=none] table [x=p, y=Lev_eig_1] {experiments_new/eig_centrality/k20_s0_tolInf_freq100/plot.dat}; \addlegendentry{Leverage}
\addplot+ [mark=none] table [x=p, y=LU_eig_1] {experiments_new/eig_centrality/k20_s0_tolInf_freq100/plot.dat}; \addlegendentry{LU}
\addplot+ [mark=none] table [x=p, y=POD_eig_1] {experiments_new/eig_centrality/k20_s0_tolInf_freq100/plot.dat}; \addlegendentry{\texttt{MOR}}
\addplot+ [mark = o, mark repeat=5] table [x=p, y=Ex_eig_1] {experiments_new/eig_centrality/k20_s0_tolInf_freq100/plot.dat}; \addlegendentry{Exact}

\end{groupplot}
\end{tikzpicture}
\caption{Results for experiments Graph Eigenvector Centrality in \Cref{sub:eigcen} for $20$ snapshot points at the beginning. Snapshot matrix updated every $100$ steps.}\label{fig:graph_k50_freq20_start}
\end{figure}

\begin{table}[ht]
\centering
\caption{Computational times for experiments Graph Eigenvector Centrality in \Cref{sub:eigcen} for $20$ snapshot points at the beginning. Snapshot matrix updated every $100$ steps.}\label{tab:graph_k50_freq20_start}
\begin{filecontents*}{experiments_new/eig_centrality/k20_s0_tolInf_freq100/times.dat}
Metodo	Offline_sec	Online_sec	Totale_sec	Ratio_Off_vs_POD	Ratio_On_vs_POD	Ratio_vs_POD
LU	7.213743	10.132686	17.346429	1.07929833832406	0.175990834089708	0.269946307130608
Leverage	6.800076	10.393932	17.194008	1.01740673701257	0.180528318172665	0.267574321168589
POD	6.683734	57.575078	64.258812	1	1	1
\end{filecontents*}
\pgfplotstabletypeset[times table style]{experiments_new/eig_centrality/k20_s0_tolInf_freq100/times.dat}
\end{table}

\subsection{Example 6: Transformer}\label{sub:trans}

As a final example, we compute the leading eigenvalue of the attention matrix of a Transformer-based architecture during the fine-tuning process. See \cite{Vas2017} for the definitions of the matrices we will use later. 

We consider the pre-trained \textit{ModernBERT-base} model \cite{War2024} (model name\\ ``answerdotai/ModernBERT-base'' in the \texttt{transformers} library in Python) and we fine-tune it on the SCOTUS subset of the LexGLUE dataset \cite{Cha2022} (dataset name ``lex\_glue'' in the \texttt{datasets} library in Python) for $4$ epochs. The network is fine-tuned using the AdamW optimizer with a batch size of $8$ and a peak learning rate of $3 \times 10^{-5}$, governed by a cosine learning rate scheduler with a $10\%$ warmup ratio.

Before training, we select a specific document of length $5000$ tokens which will remain fixed throughout the entire process.

Denote with $p$ the number of the step of the training of the model in the last epoch.
The attention mechanism of this architecture includes various matrices, but we focus on the two matrices of parameters of the $3$rd attention head of the $21$st layer, denoted with $W_Q(p) \in \R^{768 \times 64}$ and $W_K(p) \in \R^{768 \times 64}$. Let $X \in \R^{5000 \times 768}$ the embedding of the document we have selected at the beginning, our goal is to compute the leading singular value and vector of
\[ softmax \left( \frac{XW_Q(XW_K)^T}{\sqrt{64}}\right).\]
This is an important problem as, in many architectures, the attention matrix has very low-rank and a large fraction of the information is contained in the leading singular vectors (see for example \cite{Don2021}).
To simplify the  process, we save $XW_Q$ and $XW_K$ directly.

We extract the matrices $XW_Q$ and $XW_K$ at each step of the final training epoch (fourth epoch). This yields a discrete parameter set of $563$ sequential points, such that $p \in \{1, \dots, 563\}$. In \Cref{fig:trans_k40_freq20} and \Cref{tab:trans_k40_freq20} we report the results for $50$ snapshot points equal to $1,\dots,50$ and we do not update the snapshot matrix.  We see that the leading singular value does not change much during training. This may be connected to the fact that the network is not changing its attention core structure, even though the architecture is fine-tuning. This highlights that the fundamental structure learned by the model in the pre-training is still valid for our dataset. This phenomenon is observed in \cite[\S 4.3]{Kov2019} and \cite[\S 5]{Agh2021}.

In the plots, we note that the error is almost the same for all the parameters except the first ones. Furthermore, we observe that the \texttt{LU} variant of \texttt{SubApSnapEig} does not work for this example and overestimate the leading singular value by a factor of $2$. The accuracy for the other methods is of order $10^{-2}$ for the singular value and $10^{-1}$ for the singular vector, which is usually enough for machine learning tasks.

As our machine has enough RAM, we load all the matrices into RAM to avoid taking into account the time needed to load the matrices in memory. This shows that \texttt{SubApSnapEig} is around $20\times$ faster than \texttt{MOR} method for this example. This is also possible as we can first extract the rows from $XW_Q$, compute the rows of $XW_Q(XW_K)^T$ and then apply the softmax, as the softmax function applies row-wise.

% ==================== GRAPH FIG 1 ====================
\begin{figure}[ht]
\centering
\begin{tikzpicture}
\begin{groupplot}[
    group style={group size=2 by 2, horizontal sep=2cm, vertical sep=0.15\textwidth},
    width=0.45\textwidth,
    height=0.4\textwidth,
    grid=both,
    legend style={
    at={(-0.3,-0.5)},
    anchor=south,
    legend columns=4,
    font=\small
    },
]

% Residual
\nextgroupplot[title={Residual}, xlabel={Parameter value}, ymode=log]
\addplot+ [mark=none] table [x=p, y=Res_Leverage] {experiments_new/transformer/k50_s0_tolInf_freqInf/res.dat};
\addplot+ [mark=none] table [x=p, y=Res_LU] {experiments_new/transformer/k50_s0_tolInf_freqInf/res.dat};
\addplot+ [mark=none] table [x=p, y=Res_POD] {experiments_new/transformer/k50_s0_tolInf_freqInf/res.dat};

% Eigenvalues
\nextgroupplot[title={Singular values error}, xlabel={Parameter value}, ymode=log]
\addplot+ [mark=none] table [x=p, y=ErrLam_Leverage] {experiments_new/transformer/k50_s0_tolInf_freqInf/eigval.dat};
\addplot+ [mark=none] table [x=p, y=ErrLam_LU] {experiments_new/transformer/k50_s0_tolInf_freqInf/eigval.dat};
\addplot+ [mark=none] table [x=p, y=ErrLam_POD] {experiments_new/transformer/k50_s0_tolInf_freqInf/eigval.dat};

% Eigenvectors
\nextgroupplot[title={Singular vector error}, xlabel={Parameter value}, ymode=log]
\addplot+ [mark=none] table [x=p, y=ErrAng_Leverage] {experiments_new/transformer/k50_s0_tolInf_freqInf/eigvec.dat};
\addplot+ [mark=none] table [x=p, y=ErrAng_LU] {experiments_new/transformer/k50_s0_tolInf_freqInf/eigvec.dat};
\addplot+ [mark=none] table [x=p, y=ErrAng_POD] {experiments_new/transformer/k50_s0_tolInf_freqInf/eigvec.dat};

% Plot
\nextgroupplot[title={Singular values plot}, xlabel={Parameter value}]
\addplot+ [mark=none] table [x=p, y=Lev_eig_1] {experiments_new/transformer/k50_s0_tolInf_freqInf/plot.dat}; \addlegendentry{Leverage}
\addplot+ [mark=none] table [x=p, y=LU_eig_1] {experiments_new/transformer/k50_s0_tolInf_freqInf/plot.dat}; \addlegendentry{LU}
\addplot+ [mark=none] table [x=p, y=POD_eig_1] {experiments_new/transformer/k50_s0_tolInf_freqInf/plot.dat}; \addlegendentry{\texttt{MOR}}
\addplot+ [mark = o, mark repeat=5] table [x=p, y=Ex_eig_1] {experiments_new/transformer/k50_s0_tolInf_freqInf/plot.dat}; \addlegendentry{Exact}

\end{groupplot}
\end{tikzpicture}
\caption{Results for experiments Transformer in \Cref{sub:trans} for $50$ snapshot points at the beginning.}\label{fig:trans_k40_freq20}
\end{figure}

\begin{table}[ht]
\centering
\caption{Computational times for experiments Transformer in \Cref{sub:eigcen} for $50$ snapshot points at the beginning.}\label{tab:trans_k40_freq20}
\begin{filecontents*}{experiments_new/transformer/k50_s0_tolInf_freqInf/times.dat}
Metodo	Offline_sec	Online_sec	Totale_sec	Ratio_Off_vs_POD	Ratio_On_vs_POD	Ratio_vs_POD
LU	13.311349	2.893466	16.204815	0.83144698526077	0.0315728480311515	0.150526850455686
Leverage	13.040347	4.711133	17.75148	0.814519790586538	0.0514068201470287	0.164893852557225
POD	16.009859	91.644124	107.653983	1	1	1
\end{filecontents*}
\pgfplotstabletypeset[times table style]{experiments_new/transformer/k50_s0_tolInf_freqInf/times.dat}
\end{table}

\section*{Acknowledgments}
During the preparation of this work, the authors made use of Gemini Pro 3.1 (Google) and limited use of Claude Sonnet 5 (Anthropic), and ChatGPT Edu (OpenAI) to assist in the following tasks: conducting bibliographic research (including the identification of application examples and locating \cite{Saf2010}), brainstorming and suggesting experimental designs, and writing, debugging, optimizing and commenting the source code used for the experiments (including the fine-tuning of the model in \Cref{sub:trans}).

\bibliographystyle{siamplain}
\bibliography{references}
\end{document}

%% file: ex_shared.tex
\usepackage{lipsum}
\usepackage{amsfonts}
\usepackage{graphicx}
\usepackage{epstopdf}
\usepackage{algpseudocode}
\usepackage{algorithm}
\ifpdf
  \DeclareGraphicsExtensions{.eps,.pdf,.png,.jpg}
\else
  \DeclareGraphicsExtensions{.eps}
\fi

\newcommand*\samethanks[1][\value{footnote}]{\footnotemark[#1]}

\newsiamremark{remark}{Remark}
\newsiamremark{hypothesis}{Hypothesis}
\crefname{hypothesis}{Hypothesis}{Hypotheses}
\newsiamthm{claim}{Claim}

\headers{Sequence of eigenvalue problems with snapshot and subsampling}{F. Hrobat and Y. Nakatsukasa}

\title{Solving a sequence of eigenvalue problems using snapshot and row subsampling\thanks{Version of \today.
\funding{FH is supported by an Mathematical Institute Scholarship of the Oxford Mathematical Institute and by a College Scholarship in Sciences of St Catherine's College, Oxford.
YN is supported by EPSRC grants EP/Y010086/1 and EP/Y030990/1. }}}

\author{Francesco Hrobat\thanks{Mathematical Institute, Oxford, United Kingdom
  (\email{francesco.hrobat@maths.ox.ac.uk}, 
  \email{yuji.nakatsukasa@maths.ox.ac.uk}).}
\and Yuji Nakatsukasa\samethanks
}

\usepackage{amsopn}

\makeatletter
\newcommand*{\addFileDependency}[1]{% argument=file name and extension
  \typeout{(#1)}% latexmk will find this if $recorder=0 (however, in that case, it will ignore #1 if it is a .aux or .pdf file etc and it exists! if it doesn't exist, it will appear in the list of dependents regardless)
  \@addtofilelist{#1}% if you want it to appear in \listfiles, not really necessary and latexmk doesn't use this
  \IfFileExists{#1}{}{\typeout{No file #1.}}% latexmk will find this message if #1 doesn't exist (yet)
}
\makeatother

%% file: references.bib
@misc{Jon2025,
      title={{SubApSnap: Solving parameter-dependent linear systems with a snapshot and subsampling}}, 
      author={Eleanor Jones and Yuji Nakatsukasa},
      year={2025},
  eprint        = {2510.04825},
  archivePrefix = {arXiv},
}

@article {Nak2024,
    AUTHOR = {Nakatsukasa, Yuji and Tropp, Joel A.},
     TITLE = {Fast and accurate randomized algorithms for linear systems and
              eigenvalue problems},
   JOURNAL = {SIAM J. Matrix Anal. Appl.},
  FJOURNAL = {SIAM Journal on Matrix Analysis and Applications},
    VOLUME = {45},
      YEAR = {2024},
    NUMBER = {2},
     PAGES = {1183--1214},
   MRCLASS = {65F10 (65F15)},
  MRNUMBER = {4760964},
MRREVIEWER = {Gang\ Wu},
}

@book {Saa2011,
    AUTHOR = {Saad, Yousef},
     TITLE = {Numerical methods for large eigenvalue problems},
    SERIES = {Classics in Applied Mathematics},
    VOLUME = {66},
   EDITION = {Revised},
 PUBLISHER = {Society for Industrial and Applied Mathematics (SIAM),
              Philadelphia, PA},
      YEAR = {2011},
     PAGES = {xvi+276},
      ISBN = {978-1-611970-72-2},
   MRCLASS = {65F15 (01A75 65-02 65F50)},
  MRNUMBER = {3396212},
}

@book{Sch2001,
  author    = {Schmid, Peter J. and Henningson, Dan S.},
  title     = {Stability and {T}ransition in {S}hear {F}lows},
  series    = {Applied Mathematical Sciences},
  volume    = {142},
  publisher = {Springer},
  address   = {New York},
  year      = {2001},
}

@article {LeB2005,
    AUTHOR = {Le Bris, Claude},
     TITLE = {Computational chemistry from the perspective of numerical
              analysis},
   JOURNAL = {Acta Numer.},
  FJOURNAL = {Acta Numerica},
    VOLUME = {14},
      YEAR = {2005},
     PAGES = {363--444},
      ISSN = {0962-4929,1474-0508},
   MRCLASS = {81V55 (92E20)},
  MRNUMBER = {2168346},
MRREVIEWER = {Brian\ L.\ Burrows},
}

@book {Lan2012,
    AUTHOR = {Langville, Amy N. and Meyer, Carl D.},
     TITLE = {Google's {P}age{R}ank and beyond: the science of search engine
              rankings},
   EDITION = {Paperback},
 PUBLISHER = {Princeton University Press, Princeton, NJ},
      YEAR = {2012},
     PAGES = {x+224},
}

@book{Has2009,
	author = {Trevor Hastie and Robert Tibshirani and Jerome Friedman},
	title = {The {E}lements of {S}tatistical {L}earning: {D}ata {M}ining, {I}nference and {P}rediction},
	publisher = {{S}pringer},
	edition = "2nd",
	_address = springer-ad,
	year = 2009
}

@book{Sch2002,

  author = {Sch\"{o}lkopf, Bernhard and Smola, Alexander J.},
  publisher = {MIT Press},
  series = {Adaptive computation and machine learning},
  title = {Learning with kernels : support vector machines, regularization, optimization, and beyond},
  year = 2002
}

@book {Chu1997,
    AUTHOR = {Chung, Fan R. K.},
     TITLE = {Spectral graph theory},
    SERIES = {CBMS Regional Conference Series in Mathematics},
    VOLUME = {92},
 PUBLISHER = {Conference Board of the Mathematical Sciences, Washington, DC;
              by the American Mathematical Society, Providence, RI},
      YEAR = {1997},
     PAGES = {xii+207},
      ISBN = {0-8218-0315-8},
   MRCLASS = {58G99 (05C50 35P05 46N20 47N20)},
  MRNUMBER = {1421568},
MRREVIEWER = {Robert\ Brooks},
}

@inproceedings{Bel2001,
author = {Belkin, Mikhail and Niyogi, Partha},
title = {Laplacian eigenmaps and spectral techniques for embedding and clustering},
year = {2001},
publisher = {MIT Press},
address = {Cambridge, MA, USA},
booktitle = {Proceedings of the 15th International Conference on Neural Information Processing Systems: Natural and Synthetic},
pages = {585–591},
numpages = {7},
location = {Vancouver, British Columbia, Canada},
series = {NIPS'01}
}

@ARTICLE{Sag2016,
       author = {{Sagun}, Levent and {Bottou}, Leon and {LeCun}, Yann},
        title = "{Eigenvalues of the Hessian in Deep Learning: Singularity and Beyond}",
      journal = {arXiv e-prints},
         year = 2016,
        pages = {arXiv:1611.07476},
archivePrefix = {arXiv},
       eprint = {1611.07476},
}

@inproceedings{Mit2013,
author = {Mitliagkas, Ioannis and Caramanis, Constantine and Jain, Prateek},
title = {{Memory limited, streaming PCA}},
year = {2013},
publisher = {Curran Associates Inc.},
address = {Red Hook, NY, USA},
booktitle = {Proceedings of the 27th International Conference on Neural Information Processing Systems - Volume 2},
pages = {2886–2894},
numpages = {9},
location = {Lake Tahoe, Nevada},
series = {NIPS'13}
}

@inproceedings{
Ana2019,
title={{Neural Operator: Graph Kernel Network for Partial Differential Equations}},
author={Anima Anandkumar and Kamyar Azizzadenesheli and Kaushik Bhattacharya and Nikola Kovachki and Zongyi Li and Burigede Liu and Andrew Stuart},
booktitle={ICLR 2020 Workshop on Integration of Deep Neural Models and Differential Equations},
year={2020},
}

@article {Buc2013,
    AUTHOR = {Buchan, A. G. and Pain, C. C. and Fang, F. and Navon, I. M.},
     TITLE = {A {POD} reduced-order model for eigenvalue problems with
              application to reactor physics},
   JOURNAL = {Internat. J. Numer. Methods Engrg.},
  FJOURNAL = {International Journal for Numerical Methods in Engineering},
    VOLUME = {95},
      YEAR = {2013},
    NUMBER = {12},
     PAGES = {1011--1032},
      ISSN = {0029-5981,1097-0207},
   MRCLASS = {65N25 (82D75)},
  MRNUMBER = {3100686},
}

@article {Alg2024,
    AUTHOR = {Alghamdi, Moataz and Bertrand, Fleurianne and Boffi, Daniele
              and Halim, Abdul},
     TITLE = {A data-driven method for parametric {PDE} eigenvalue problems
              using {G}aussian process with different covariance functions},
   JOURNAL = {Comput. Methods Appl. Math.},
  FJOURNAL = {Computational Methods in Applied Mathematics},
    VOLUME = {24},
      YEAR = {2024},
    NUMBER = {3},
     PAGES = {533--555},
      ISSN = {1609-4840,1609-9389},
   MRCLASS = {65N25 (35P15 62G08 78M34)},
  MRNUMBER = {4766709},
}

@article {Alg2025,
    AUTHOR = {Alghamdi, Moataz and Boffi, Daniele and Bonizzoni, Francesca},
     TITLE = {A greedy {MOR} method for the tracking of eigensolutions to
              parametric elliptic {PDE}s},
   JOURNAL = {J. Comput. Appl. Math.},
  FJOURNAL = {Journal of Computational and Applied Mathematics},
    VOLUME = {457},
      YEAR = {2025},
     PAGES = {Paper No. 116270, 16},
      ISSN = {0377-0427,1879-1778},
   MRCLASS = {65N25 (35B30 35P15 62K25 65N30 78M34)},
  MRNUMBER = {4801488},
MRREVIEWER = {Alexandre\ L.\ Madureira},
}

@article {Ber2023,
    AUTHOR = {Bertrand, Fleurianne and Boffi, Daniele and Halim, Abdul},
     TITLE = {A reduced order model for the finite element approximation of
              eigenvalue problems},
   JOURNAL = {Comput. Methods Appl. Mech. Engrg.},
  FJOURNAL = {Computer Methods in Applied Mechanics and Engineering},
    VOLUME = {404},
      YEAR = {2023},
     PAGES = {Paper No. 115696, 26},
      ISSN = {0045-7825,1879-2138},
   MRCLASS = {65N25 (65N30)},
  MRNUMBER = {4515207},
}

@article {Ber2023/2,
    AUTHOR = {Bertrand, Fleurianne and Boffi, Daniele and Halim, Abdul},
     TITLE = {Data-driven reduced order modeling for parametric {PDE}
              eigenvalue problems using {G}aussian process regression},
   JOURNAL = {J. Comput. Phys.},
  FJOURNAL = {Journal of Computational Physics},
    VOLUME = {495},
      YEAR = {2023},
     PAGES = {Paper No. 112503, 28},
      ISSN = {0021-9991,1090-2716},
   MRCLASS = {65J15 (65F55 65M70)},
  MRNUMBER = {4653319},
}

@article {Fum2016,
    AUTHOR = {Fumagalli, Ivan and Manzoni, Andrea and Parolini, Nicola and
              Verani, Marco},
     TITLE = {Reduced basis approximation and {\it a posteriori} error
              estimates for parametrized elliptic eigenvalue problems},
   JOURNAL = {ESAIM Math. Model. Numer. Anal.},
  FJOURNAL = {ESAIM. Mathematical Modelling and Numerical Analysis},
    VOLUME = {50},
      YEAR = {2016},
    NUMBER = {6},
     PAGES = {1857--1885},
      ISSN = {2822-7840,2804-7214},
   MRCLASS = {65N25 (65N15 65N30)},
  MRNUMBER = {3580125},
MRREVIEWER = {Srinivasan\ Kesavan},
}

@article {Hor2017,
    AUTHOR = {Horger, Thomas and Wohlmuth, Barbara and Dickopf, Thomas},
     TITLE = {Simultaneous reduced basis approximation of parameterized
              elliptic eigenvalue problems},
   JOURNAL = {ESAIM Math. Model. Numer. Anal.},
  FJOURNAL = {ESAIM. Mathematical Modelling and Numerical Analysis},
    VOLUME = {51},
      YEAR = {2017},
    NUMBER = {2},
     PAGES = {443--465},
      ISSN = {2822-7840,2804-7214},
   MRCLASS = {65N25 (35J25 65N30)},
  MRNUMBER = {3626406},
}

@book{Pin1985,
  author    = {Pinkus, Allan},
  title     = {n-{W}idths in {A}pproximation {T}heory},
  series    = {Ergebnisse der Mathematik und ihrer Grenzgebiete (3. Folge)},
  volume    = {7},
  publisher = {Springer-Verlag},
  address   = {Berlin, Heidelberg, New York},
  year      = {1985},
}

@article {Gui2024,
    AUTHOR = {Guido, Margherita and Kressner, Daniel and Ricci, Paolo},
     TITLE = {Subspace acceleration for a sequence of linear systems and
              application to plasma simulation},
   JOURNAL = {J. Sci. Comput.},
  FJOURNAL = {Journal of Scientific Computing},
    VOLUME = {99},
      YEAR = {2024},
    NUMBER = {3},
     PAGES = {Paper No. 68, 18},
      ISSN = {0885-7474,1573-7691},
   MRCLASS = {65M70 (76X05 82D10)},
  MRNUMBER = {4736333},
}

@article {Kre2011,
    AUTHOR = {Kressner, Daniel and Tobler, Christine},
     TITLE = {Low-rank tensor {K}rylov subspace methods for parametrized
              linear systems},
   JOURNAL = {SIAM J. Matrix Anal. Appl.},
  FJOURNAL = {SIAM Journal on Matrix Analysis and Applications},
    VOLUME = {32},
      YEAR = {2011},
    NUMBER = {4},
     PAGES = {1288--1316},
      ISSN = {0895-4798,1095-7162},
   MRCLASS = {65F10 (15A69)},
  MRNUMBER = {2854614},
MRREVIEWER = {Christian\ Wieners},
}

@book{Kat1976,
  author = "Kato, Tosio",
  title = "{{P}erturbation {T}heory for {L}inear {O}perators}",
  publisher = "Springer",
  address = "Berlin",
  year = "1976",
  edition = "2nd"
}

@book{For1981,
  title={Lectures on {R}iemann {S}urfaces},
  author={Forster, Otto},
  year={1981},
  publisher={Springer-Verlag},
  series={Graduate Texts in Mathematics},
  volume={81},
  address={New York},
}

@ARTICLE{Ber1993,
       author = {{Berkooz}, Gal and {Holmes}, Philip and {Lumley}, John L.},
        title = "{The proper orthogonal decomposition in the analysis of turbulent flows}",
      journal = {Annual Review of Fluid Mechanics},
         year = 1993,
        month = jan,
       volume = {25},
        pages = {539-575},
}

@article{Cha2000,
author = {Anindya Chatterjee},
 journal = {Current Science},
 number = {7},
 pages = {808--817},
 publisher = {Temporary Publisher},
 title = {An introduction to the proper orthogonal decomposition},
 volume = {78},
 year = {2000}
}

@article {Don2023,
    AUTHOR = {Dong, Yijun and Martinsson, Per-Gunnar},
     TITLE = {Simpler is better: a comparative study of randomized pivoting
              algorithms for {CUR} and interpolative decompositions},
   JOURNAL = {Adv. Comput. Math.},
  FJOURNAL = {Advances in Computational Mathematics},
    VOLUME = {49},
      YEAR = {2023},
    NUMBER = {4},
     PAGES = {Paper No. 66, 29},
      ISSN = {1019-7168,1572-9044},
   MRCLASS = {65F55},
  MRNUMBER = {4626444},
MRREVIEWER = {Miroslav\ T\ocirc uma},
}

@article {Osi2025,
    AUTHOR = {Osinsky, A. I.},
     TITLE = {Close to optimal column approximation using a single {SVD}},
   JOURNAL = {Linear Algebra Appl.},
  FJOURNAL = {Linear Algebra and its Applications},
    VOLUME = {725},
      YEAR = {2025},
     PAGES = {359--377},
      ISSN = {0024-3795,1873-1856},
   MRCLASS = {65F55 (15A23 68W25)},
  MRNUMBER = {4935617},
MRREVIEWER = {Pablo\ Sot}
}

@article {Dri2012,
    AUTHOR = {Drineas, Petros and Magdon-Ismail, Malik and Mahoney, Michael
              W. and Woodruff, David P.},
     TITLE = {Fast approximation of matrix coherence and statistical
              leverage},
   JOURNAL = {J. Mach. Learn. Res.},
  FJOURNAL = {Journal of Machine Learning Research (JMLR)},
    VOLUME = {13},
      YEAR = {2012},
     PAGES = {3475--3506},
      ISSN = {1532-4435,1533-7928},
   MRCLASS = {65F15 (60C05 60E15 68W20)},
  MRNUMBER = {3033372},
MRREVIEWER = {Raffaella\ Pavani},
}

@article {Der2021,
    AUTHOR = {Derezi\'nski, Micha\l{} and Mahoney, Michael W.},
     TITLE = {Determinantal point processes in randomized numerical linear
              algebra},
   JOURNAL = {Notices Amer. Math. Soc.},
  FJOURNAL = {Notices of the American Mathematical Society},
    VOLUME = {68},
      YEAR = {2021},
    NUMBER = {1},
     PAGES = {34--45},
      ISSN = {0002-9920,1088-9477},
   MRCLASS = {65F40 (60G55)},
  MRNUMBER = {4202314},
}

@article {Cor2025,
    AUTHOR = {Cortinovis, Alice and Kressner, Daniel},
     TITLE = {Adaptive randomized pivoting for column subset selection,
              {DEIM}, and low-rank approximation},
   JOURNAL = {SIAM J. Matrix Anal. Appl.},
  FJOURNAL = {SIAM Journal on Matrix Analysis and Applications},
    VOLUME = {47},
      YEAR = {2026},
    NUMBER = {1},
     PAGES = {25--47},
   MRCLASS = {65F55 (65C20)},
  MRNUMBER = {5009256},
}

@article {Kil2006,
    AUTHOR = {Kilmer, Misha E. and de Sturler, Eric},
     TITLE = {Recycling subspace information for diffuse optical tomography},
   JOURNAL = {SIAM J. Sci. Comput.},
  FJOURNAL = {SIAM Journal on Scientific Computing},
    VOLUME = {27},
      YEAR = {2006},
    NUMBER = {6},
     PAGES = {2140--2166},
      ISSN = {1064-8275,1095-7197},
   MRCLASS = {65F10 (65N22)},
  MRNUMBER = {2211444},
MRREVIEWER = {Jan\ Z\'itko},
}

@article {Par2006,
    AUTHOR = {Parks, Michael L. and de Sturler, Eric and Mackey, Greg and
              Johnson, Duane D. and Maiti, Spandan},
     TITLE = {Recycling {K}rylov subspaces for sequences of linear systems},
   JOURNAL = {SIAM J. Sci. Comput.},
  FJOURNAL = {SIAM Journal on Scientific Computing},
    VOLUME = {28},
      YEAR = {2006},
    NUMBER = {5},
     PAGES = {1651--1674},
      ISSN = {1064-8275,1095-7197},
   MRCLASS = {65F10},
  MRNUMBER = {2272183},
MRREVIEWER = {Petr\ Tich\'y},
}

@article {Sim2010,
    AUTHOR = {Simoncini, V.},
     TITLE = {Extended {K}rylov subspace for parameter dependent systems},
   JOURNAL = {Appl. Numer. Math.},
  FJOURNAL = {Applied Numerical Mathematics. An IMACS Journal},
    VOLUME = {60},
      YEAR = {2010},
    NUMBER = {5},
     PAGES = {550--560},
      ISSN = {0168-9274,1873-5460},
   MRCLASS = {65F60 (15A24 65F10 65F30)},
  MRNUMBER = {2610803},
}

@article {Cor2025/2,
    AUTHOR = {Correnty, Siobh\'an and Freitag, Melina A. and Soodhalter,
              Kirk M.},
     TITLE = {Chebyshev {HOPGD} with sparse grid sampling for parameterized
              linear systems},
   JOURNAL = {Calcolo},
  FJOURNAL = {Calcolo. A Quarterly on Numerical Analysis and Theory of
              Computation},
    VOLUME = {62},
      YEAR = {2025},
    NUMBER = {3},
     PAGES = {Paper No. 28, 29},
      ISSN = {0008-0624,1126-5434},
   MRCLASS = {65F10 (65F55 65N22)},
  MRNUMBER = {4933926},
}

@article {Cor2023,
    AUTHOR = {Correnty, Siobh\'an and Jarlebring, Elias and Szyld, Daniel
              B.},
     TITLE = {Preconditioned {C}hebyshev {B}i{CG} method for parameterized
              linear systems},
   JOURNAL = {Electron. Trans. Numer. Anal.},
  FJOURNAL = {Electronic Transactions on Numerical Analysis},
    VOLUME = {58},
      YEAR = {2023},
     PAGES = {629--656},
      ISSN = {1068-9613},
   MRCLASS = {65F10 (15A06 65F08 65F50 65N22 65P99)},
  MRNUMBER = {4681081},
}

@article {Jar2022,
    AUTHOR = {Jarlebring, Elias and Correnty, Siobh\'an},
     TITLE = {Infinite {GMRES} for parameterized linear systems},
   JOURNAL = {SIAM J. Matrix Anal. Appl.},
  FJOURNAL = {SIAM Journal on Matrix Analysis and Applications},
    VOLUME = {43},
      YEAR = {2022},
    NUMBER = {3},
     PAGES = {1382--1405},
      ISSN = {0895-4798,1095-7162},
   MRCLASS = {65F10 (35G15 65F50)},
  MRNUMBER = {4468619},
MRREVIEWER = {Zheng\ Sheng\ Wang},
}

@article {Mar2006,
    AUTHOR = {Markovinovi\'c, R. and Jansen, J. D.},
     TITLE = {Accelerating iterative solution methods using reduced-order
              models as solution predictors},
   JOURNAL = {Internat. J. Numer. Methods Engrg.},
  FJOURNAL = {International Journal for Numerical Methods in Engineering},
    VOLUME = {68},
      YEAR = {2006},
    NUMBER = {5},
     PAGES = {525--541},
      ISSN = {0029-5981,1097-0207},
   MRCLASS = {65M99 (76M25 76S05)},
  MRNUMBER = {2261401},
}

@article {Cha2010,
    AUTHOR = {Chaturantabut, Saifon and Sorensen, Danny C.},
     TITLE = {Nonlinear model reduction via discrete empirical
              interpolation},
   JOURNAL = {SIAM J. Sci. Comput.},
  FJOURNAL = {SIAM Journal on Scientific Computing},
    VOLUME = {32},
      YEAR = {2010},
    NUMBER = {5},
     PAGES = {2737--2764},
      ISSN = {1064-8275,1095-7197},
   MRCLASS = {65L99 (35J60 35K55 65F99 65M99)},
  MRNUMBER = {2684735},
MRREVIEWER = {Sebastian\ Reich},
}

@book {Ste2001,
    AUTHOR = {Stewart, G. W.},
     TITLE = {Matrix algorithms. {V}ol. {II}},
      NOTE = {Eigensystems},
 PUBLISHER = {Society for Industrial and Applied Mathematics (SIAM),
              Philadelphia, PA},
      YEAR = {2001},
     PAGES = {xx+469},
      ISBN = {0-89871-503-2},
   MRCLASS = {65-01 (65F15)},
  MRNUMBER = {1853468},
}

@article {Mar2020,
    AUTHOR = {Martinsson, Per-Gunnar and Tropp, Joel A.},
     TITLE = {Randomized numerical linear algebra: foundations and
              algorithms},
   JOURNAL = {Acta Numer.},
  FJOURNAL = {Acta Numerica},
    VOLUME = {29},
      YEAR = {2020},
     PAGES = {403--572},
      ISSN = {0962-4929,1474-0508},
   MRCLASS = {65F99 (60E15 62-08)},
  MRNUMBER = {4189294},
MRREVIEWER = {Bruno\ Carpentieri},
}

@article {Soo2020,
    AUTHOR = {Soodhalter, Kirk M. and de Sturler, Eric and Kilmer, Misha E.},
     TITLE = {A survey of subspace recycling iterative methods},
   JOURNAL = {GAMM-Mitt.},
  FJOURNAL = {GAMM-Mitteilungen},
    VOLUME = {43},
      YEAR = {2020},
    NUMBER = {4},
     PAGES = {e202000016, 29},
      ISSN = {0936-7195,1522-2608},
   MRCLASS = {65F10},
  MRNUMBER = {4175151},
    }

@article {Kun1999,
    AUTHOR = {Kunisch, K. and Volkwein, S.},
     TITLE = {Control of the {B}urgers equation by a reduced-order approach
              using proper orthogonal decomposition},
   JOURNAL = {J. Optim. Theory Appl.},
  FJOURNAL = {Journal of Optimization Theory and Applications},
    VOLUME = {102},
      YEAR = {1999},
    NUMBER = {2},
     PAGES = {345--371},
   MRCLASS = {49M27 (49J20 93C20)},
  MRNUMBER = {1706822},
MRREVIEWER = {Walter\ Alt},
}

@article {Str1999,
    AUTHOR = {Strang, Gilbert},
     TITLE = {The discrete cosine transform},
   JOURNAL = {SIAM Rev.},
  FJOURNAL = {SIAM Review},
    VOLUME = {41},
      YEAR = {1999},
    NUMBER = {1},
     PAGES = {135--147},
      ISSN = {1095-7200,0036-1445},
   MRCLASS = {42C15 (65T20)},
  MRNUMBER = {1669796},
MRREVIEWER = {Krzysztof\ Nowak},
}

@article {Dem2019,
    AUTHOR = {Demanet, Laurent and Townsend, Alex},
     TITLE = {Stable extrapolation of analytic functions},
   JOURNAL = {Found. Comput. Math.},
  FJOURNAL = {Foundations of Computational Mathematics. The Journal of the
              Society for the Foundations of Computational Mathematics},
    VOLUME = {19},
      YEAR = {2019},
    NUMBER = {2},
     PAGES = {297--331},
      ISSN = {1615-3375,1615-3383},
   MRCLASS = {41A10 (30E10 65D05)},
  MRNUMBER = {3937956},
MRREVIEWER = {Yi\ Li},
}

@article {Saf2010,
    AUTHOR = {Saff, E. B.},
     TITLE = {Logarithmic potential theory with applications to
              approximation theory},
   JOURNAL = {Surv. Approx. Theory},
  FJOURNAL = {Surveys in Approximation Theory},
    VOLUME = {5},
      YEAR = {2010},
     PAGES = {165--200},
      ISSN = {1555-578X},
   MRCLASS = {31-02 (30E10 31A15 41A20)},
  MRNUMBER = {2734198},
MRREVIEWER = {Anders\ Gustafsson},
}

@book {Tre2020,
    AUTHOR = {Trefethen, Lloyd N.},
     TITLE = {Approximation theory and approximation practice},
   EDITION = {Extended},
 PUBLISHER = {Society for Industrial and Applied Mathematics (SIAM),
              Philadelphia, PA},
      YEAR = {[2020] \copyright 2020},
     PAGES = {xi+363},
      ISBN = {978-1-611975-93-2},
   MRCLASS = {41-01 (41-04 65-01)},
  MRNUMBER = {4050406},
MRREVIEWER = {Ralitza\ Kovacheva},
}

@article {Ban2022,
    AUTHOR = {Banerjee, Sayan and Olvera-Cravioto, Mariana},
     TITLE = {Page{R}ank asymptotics on directed preferential attachment
              networks},
   JOURNAL = {Ann. Appl. Probab.},
  FJOURNAL = {The Annals of Applied Probability},
    VOLUME = {32},
      YEAR = {2022},
    NUMBER = {4},
     PAGES = {3060--3084},
      ISSN = {1050-5164,2168-8737},
   MRCLASS = {05C80 (41A60 60B10 60J80)},
  MRNUMBER = {4474527},
MRREVIEWER = {Mei\ Yin},
}

@inproceedings{Mei2010,
  author    = {Mei, Qiaozhu and Guo, Jian and Radev, Dragomir},
  title     = {DivRank: the interplay of prestige and diversity in information networks},
  booktitle = {Proceedings of the 16th ACM SIGKDD International Conference on Knowledge Discovery and Data Mining},
  series    = {KDD '10},
  year      = {2010},
  location  = {Washington, DC, USA},
  pages     = {1009--1018},
  publisher = {Association for Computing Machinery},
  address   = {New York, NY, USA}
}

@article {Lev2006,
    AUTHOR = {Levenberg, Norm},
     TITLE = {Approximation in {$\mathbb{C}^N$}},
   JOURNAL = {Surv. Approx. Theory},
  FJOURNAL = {Surveys in Approximation Theory},
    VOLUME = {2},
      YEAR = {2006},
     PAGES = {92--140},
      ISSN = {1555-578X},
   MRCLASS = {32E30},
  MRNUMBER = {2276419},
MRREVIEWER = {S\l awomir\ Ko\l odziej},
}

@article {Hal2011,
    AUTHOR = {Halko, N. and Martinsson, P. G. and Tropp, J. A.},
     TITLE = {Finding structure with randomness: probabilistic algorithms
              for constructing approximate matrix decompositions},
   JOURNAL = {SIAM Rev.},
  FJOURNAL = {SIAM Review},
    VOLUME = {53},
      YEAR = {2011},
    NUMBER = {2},
     PAGES = {217--288},
      ISSN = {1095-7200,0036-1445},
   MRCLASS = {65F30 (60B20 68W20)},
  MRNUMBER = {2806637},
MRREVIEWER = {Thomas\ K.\ Huckle},
}

@book{Ras2005,
  title     = {{Gaussian Processes for Machine Learning}},
  author    = {Rasmussen, Carl Edward and Williams, Christopher K. I.},
  year      = {2005},
  publisher = {The MIT Press},
  series    = {Adaptive Computation and Machine Learning},
  address   = {Cambridge, MA, USA},
  isbn      = {9780262182539},
}

@article {Rea1983,
    AUTHOR = {Reade, J. B.},
     TITLE = {Eigenvalues of positive definite kernels},
   JOURNAL = {SIAM J. Math. Anal.},
  FJOURNAL = {SIAM Journal on Mathematical Analysis},
    VOLUME = {14},
      YEAR = {1983},
    NUMBER = {1},
     PAGES = {152--157},
      ISSN = {0036-1410},
   MRCLASS = {47A70 (47B38)},
  MRNUMBER = {686242},
MRREVIEWER = {Bernd\ Carl},
}

@article {Ha1986,
    AUTHOR = {Ha, Chung Wei},
     TITLE = {Eigenvalues of differentiable positive definite kernels},
   JOURNAL = {SIAM J. Math. Anal.},
  FJOURNAL = {SIAM Journal on Mathematical Analysis},
    VOLUME = {17},
      YEAR = {1986},
    NUMBER = {2},
     PAGES = {415--419},
      ISSN = {0036-1410},
   MRCLASS = {47A70 (47G05)},
  MRNUMBER = {826702},
MRREVIEWER = {M.\ Tvrd\'y},
}

@book{Al-2008,
  title={{Sturm--Liouville Theory and its Applications}},
  author={Al-Gwaiz, M.A.},
  year={2008},
  publisher={Springer London}
}

@article{Hen2026,
  author  = {Henrot, A. and Lemenant, A. and Privat, Y.},
  title   = {Minimization of the first eigenvalue for the {Lamé} system},
  journal = {Annali Scuola Normale Superiore - Classe di Scienze},
  year    = {2026},
}

@article{Kha2026,
  author   = {Khan, Arbaz and Lepe, Felipe and Mora, David and Vellojin, Jesus},
  title    = {Finite element analysis of the nearly incompressible linear elasticity eigenvalue problem with variable coefficients},
  journal  = {Numerische Mathematik},
  year     = {2026},
  volume   = {158},
  number   = {4},
  pages    = {1473--1512},
}

@ARTICLE{Lec1998,
  author={Lecun, Y. and Bottou, L. and Bengio, Y. and Haffner, P.},
  journal={Proceedings of the IEEE}, 
  title={Gradient-based learning applied to document recognition}, 
  year={1998},
  volume={86},
  number={11},
  pages={2278-2324},
  }

@book{New2010,
  title={{Networks: An Introduction}},
  author={Newman, Mark E. J.},
  year={2010},
  publisher={Oxford University Press},
  address={Oxford, UK}
}

@article{Dav2011,
  author = {Davis, Timothy A. and Hu, Yifan},
  title = {{The University of Florida Sparse Matrix Collection}},
  journal = {ACM Transactions on Mathematical Software},
  volume = {38},
  number = {1},
  year = {2011},
}

@book{Bad2013,
  editor = {Bader, David A. and Meyerhenke, Henning and Sanders, Peter and Wagner, Dorothea},
  title = {{Graph Partitioning and Graph Clustering: 10th DIMACS Implementation Challenge Workshop}},
  series = {Contemporary Mathematics},
  volume = {588},
  publisher = {American Mathematical Society},
  year = {2013}
}

@inproceedings{Vas2017,
author = {Vaswani, Ashish and Shazeer, Noam and Parmar, Niki and Uszkoreit, Jakob and Jones, Llion and Gomez, Aidan N. and Kaiser, {\L}ukasz and Polosukhin, Illia},
title = {Attention is all you need},
year = {2017},
booktitle = {Proceedings of the 31st International Conference on Neural Information Processing Systems},
pages = {6000–6010},
numpages = {11},
location = {Long Beach, California, USA},
series = {NIPS'17}
}

@misc{War2024,
      title={{Smarter, Better, Faster, Longer: A Modern Bidirectional Encoder for Fast, Memory Efficient, and Long Context Finetuning and Inference}}, 
      author={Benjamin Warner and Antoine Chaffin and Benjamin Clavié and Orion Weller and Oskar Hallström and Said Taghadouini and Alexis Gallagher and Raja Biswas and Faisal Ladhak and Tom Aarsen and Nathan Cooper and Griffin Adams and Jeremy Howard and Iacopo Poli},
      year={2024},
      eprint={2412.13663},
      archivePrefix={arXiv},
}

@inproceedings{Cha2022,
    title = "{{L}ex{GLUE}: A Benchmark Dataset for Legal Language Understanding in {E}nglish}",
    author = "Chalkidis, Ilias  and
      Jana, Abhik  and
      Hartung, Dirk  and
      Bommarito, Michael  and
      Androutsopoulos, Ion  and
      Katz, Daniel  and
      Aletras, Nikolaos",
    booktitle = "Proceedings of the 60th Annual Meeting of the Association for Computational Linguistics (Volume 1: Long Papers)",
    month = may,
    year = "2022",
    address = "Dublin, Ireland",
    publisher = "Association for Computational Linguistics",
    pages = "4310--4330"
}

@inproceedings{Kov2019,
    title = "{Revealing the Dark Secrets of {BERT}}",
    author = "Kovaleva, Olga  and
      Romanov, Alexey  and
      Rogers, Anna  and
      Rumshisky, Anna",
    booktitle = "Proceedings of the 2019 Conference on Empirical Methods in Natural Language Processing and the 9th International Joint Conference on Natural Language Processing (EMNLP-IJCNLP)",
    year = "2019",
    publisher = "Association for Computational Linguistics",
    pages = "4365--4374"
}

@inproceedings{Agh2021,
    title = "{Intrinsic Dimensionality Explains the Effectiveness of Language Model Fine-Tuning}",
    author = "Aghajanyan, Armen  and
      Zettlemoyer, Luke  and
      Gupta, Sonal",
    booktitle = "Proceedings of the 59th Annual Meeting of the Association for Computational Linguistics",
    year = "2021",
    publisher = "Association for Computational Linguistics",
    pages = "7319--7328"
}

@inproceedings{Don2021,
  author    = {Dong, Yihe and Cordonnier, Jean-Baptiste and Loukas, Andreas},
  title     = {Attention is Not All You Need: Pure Attention Loses Rank Doubly Exponentially with Depth},
  booktitle = {Proceedings of the 38th International Conference on Machine Learning},
  series    = {Proceedings of Machine Learning Research},
  volume    = {139},
  pages     = {2793--2803},
  year      = {2021},
  publisher = {PMLR},
}

@article{Bri1998,
title = {The anatomy of a large-scale hypertextual {W}eb search engine},
journal = {Computer Networks and ISDN Systems},
volume = {30},
number = {1-7},
pages = {107-117},
year = {1998},
author = {Sergey Brin and Lawrence Page},
}

@article {Sir2016,
    AUTHOR = {Sirkovi\'c, Petar and Kressner, Daniel},
     TITLE = {Subspace acceleration for large-scale parameter-dependent
              {H}ermitian eigenproblems},
   JOURNAL = {SIAM J. Matrix Anal. Appl.},
  FJOURNAL = {SIAM Journal on Matrix Analysis and Applications},
    VOLUME = {37},
      YEAR = {2016},
    NUMBER = {2},
     PAGES = {695--718},
      ISSN = {0895-4798,1095-7162},
   MRCLASS = {65F15 (15B57 47A55)},
  MRNUMBER = {3504990},
MRREVIEWER = {Rafikul\ Alam},
}

@misc{patera2007reduced,
  title={Reduced basis approximation and a posteriori error estimation for parametrized partial differential equations},
  author={Patera, Anthony T and Rozza, Gianluigi},
  year={2006},
  publisher={MIT Cambridge}
}

@article{maday2002priori,
  title={A priori convergence theory for reduced-basis approximations of single-parameter elliptic partial differential equations},
  author={Maday, Yvon and Patera, Anthony T and Turinici, Gabriel},
  journal={Journal of Scientific Computing},
  volume={17},
  number={1},
  pages={437--446},
  year={2002},
  publisher={Springer}
}

@incollection{hesthaven2015parametrized,
  title={Parametrized differential equations},
  author={Hesthaven, Jan S and Rozza, Gianluigi and Stamm, Benjamin},
  booktitle={Certified Reduced Basis Methods for Parametrized Partial Differential Equations},
  pages={15--25},
  year={2015},
  publisher={Springer}
}

@book{mason2010chebyshev,
  title={Chebyshev Polynomials},
  author={Mason, John C. and Handscomb, David C.},
  year={2002},
  publisher={Chapman \& Hall}
}

@book {Par1998,
    AUTHOR = {Parlett, Beresford N.},
     TITLE = {The symmetric eigenvalue problem},
    SERIES = {Classics in Applied Mathematics},
    VOLUME = {20},
      NOTE = {Corrected reprint of the 1980 original},
 PUBLISHER = {Society for Industrial and Applied Mathematics (SIAM),
              Philadelphia, PA},
      YEAR = {1998},
     PAGES = {xxiv+398},
      ISBN = {0-89871-402-8},
   MRCLASS = {65F15 (15A18)},
  MRNUMBER = {1490034},
MRREVIEWER = {F.\ Szidarovszky},
}
